\def\prob{\mathbb P} 
\def\N{\mathbb N}
\def\R{\mathbb R}
\def\Z{\mathbb Z}
\def\x{\mathbf x}
\def\y{\mathbf y}
\def\u{\mathfrak u}
\def\dx{\triangle x}
\def\dy{\triangle y}
\def\dt{\triangle t}
\def\hU{\widehat{U}}
\def\bU{\overline{U}}
\newcommand{\pder}[2][]{\frac{\partial#1}{\partial#2}}
\newcommand{\pderr}[2][]{\frac{\partial^2 #1}{\partial#2 ^2}}
\newcommand{\norm}[2][]{||#2||_#1}
\newcommand{\cso}[1]{{\color{black}#1\normalcolor}}
\newcommand{\cmy}[1]{{\color{black}#1\normalcolor}}
\newtheorem{theorem}{Theorem}
\newtheorem{lemma}{Lemma}
\newcommand*{\email}[1]{\texttt{#1}}
\title{\vspace{-1in}Computational Framework to Capture the Spatiotemporal Density of Cells with a Cumulative Environmental Coupling
}
\date{\today}
\author{Michael A. Yereniuk \thanks{Department of Mathematical Sciences, Worcester Polytechnic Institute, Worcester, MA 
  (\email{mayereniuk@wpi.edu})}
\and Sarah D. Olson \thanks{Department of Mathematical Sciences, Worcester Polytechnic Institute, Worcester, MA 
  (\email{sdolson@wpi.edu})}}
\begin{document}
\maketitle

\begin{abstract}
Stochastic agent-based models can account for millions of cells with spatiotemporal movement that can be a function of different factors. However, these simulations can be computationally expensive. In this work, we develop a novel computational framework to describe and simulate stochastic cellular processes that are coupled to the environment. Specifically, through upscaling, we derive a continuum governing equation that considers the cell density as a function of time, space, and a cumulative variable that is coupled to the environmental conditions. For this new governing equation, we consider the stability through an energy analysis, as well as proving uniqueness and well-posedness. To solve the governing equations in free-space, we propose a numerical method using fundamental solutions. As an application, we study a cell moving in an infinite domain that contains a toxic chemical, where a cumulative exposure above a critical value results in cell death. We illustrate the validity of this new modeling framework and associated numerical methods by comparing the density of live cells to results from the corresponding agent-based model. 
\end{abstract}

\begin{center}
\textbf{KEYWORDS}: Agent-based models, Upscaling, Green's functions, Operator splitting, Chemical absorption
\end{center}

\section{Introduction}\label{sec:Intro}
In many applications, we may wish to track the individual movement of a collection of cells or agents that exhibit state changes and interact with a dynamic environment. A Lagrangian framework such as a cellular automaton or an agent-based  model can be used to track the movement of each cell, while also 
accounting for state changes \cite{Hinkelmann2011,Holcombe,Laubenbacher2012, Packard1985}. To implement this type of model, one would define the cell as the agent and a new location at each time point is chosen based on pre-defined rules. With current computer architectures, simulating millions of cells is feasible but is offset by the computational time required to compute a sufficient number of simulations for analysis \cite{Alden13,Cosgrove15,Read12,Vargha00}.  In addition, in the case where one wants to couple movement rules to evolving profiles on the domain, as well as tracking cumulative variables related to environmental conditions in multiple dimensions, computational time could be prohibitive. 

Several other modeling approaches can also be utilized to understand the dynamics of a large group of cells or agents that exhibit state changes and interact with their environment; each of these methods has their own limitations and advantages. The  transition between cell states can be modeled as a stochastic process, and in the case of a Markovian or memoryless process, the transition rate  will only depend on the current state, following a Poisson distributed random process \cite{IyerBiswas16,Polizzi16}. In terms of a discrete-time Markov Chain, each state can correspond to a particular combination of cell state and cell location. Using this type of approach,  the probability of a cell being in a given state at a given time can be determined and the master equation for the rate of change of the associated probability can be obtained. 
Since analysis of cellular processes is \cmy{often times} easier in the continuous setting, one can easily move from discrete probabilities of cell states  to a continuous probability density of cell states by assuming a continuum of cell states. However, in order to keep track of a cumulative variable accounting for interactions with the environment, this approach would break down if memory in the system was necessary. 
 
Random walk models are also stochastic processes that consist of sequential random steps of movement; they have been widely used to investigate cellular motility, often in a spatially homogeneous environment \cite{Berg83,Codling08,Montroll84,Vuilleumier06,Weiss83}. 
Assuming the moving agent or cell is memoryless, an equation governing the spatiotemporal evolution of the density of cells can be determined. It corresponds to a standard diffusion equation if there is no bias in the motion or an advection-diffusion equation if there is bias in the motion \cite{Berg83,Codling08,Montroll84,Weiss83}. The advection-diffusion equation can capture different \textit{taxis}, biasing the probability of movement based on chemical profiles (chemotaxis), temperature gradients (thermotaxis), fluid flow (rheotaxis), or environmental mechanical stiffness (durotaxis) \cite{Alt80,Cai06,Okubo01,Othmer88,Othmer97}.  The continuum limit of the stochastic process is 
often formulated in the case of cell motility since it is tractable from an analytical perspective and we have existing computational methods to easily solve these governing equations. In this framework, accounting for different cell states would correspond to a system of coupled partial differential equations where local sinks or sources would describe leaving one cell state and entering another cell state. Currently, there is no random walk modeling framework to account for cells changing states due to a cumulative environmental coupling. 

Cellular processes, such as absorbing chemicals or nutrients, moving, and state transitioning will depend on the local and dynamic environment \cite{Morales07,Paul17,Selmeczi08,Stebbings01}. Often, exposure to a chemical or drug beyond a threshold will cause a change in state or motion, thus it is important to capture the cumulative chemical exposure.
To date, analysis has primarily focused on the motion of a single cell type in a homogeneous environment or a sequence of cell states in time (not accounting for motion) \cite{Codling08,Polizzi16}. 
On the other hand, stochastic agent-based models can simulate many cells and states, but analyzing these models is intractable and computations  can become prohibitive with a large number of cells coupled to the evolving environment. \cso{Hence, one approach to resolve this issue is by implementing multiscale models that bridge the gap between the individual cell level and dynamics at the global continuum level. For example, recent work has focused on developing movement rules coupled to chemical concentrations in the case of bacterial chemotaxis \cite{Xue16,Xue18a,Xue18b}}.

In this paper, we focus on the development of new computational methods to capture the continuum  density of cells or agents in space, accounting for the cumulative exposure to the environment as a continuous variable. As outlined in Section \ref{sec:prelim}, our motivating example will be a cell that moves and absorbs chemicals; a state change occurs when the cell has absorbed a critical (toxic) threshold of chemicals, causing the cell to die. In Section \ref{sec:model}, we will show how the new governing equation is able to capture these dynamics. An analysis of this equation is detailed in Section \ref{sec:analysis} and the numerical method is outlined in Section \ref{sec:numerics}. Representative numerical results are given in Section \ref{sec:results}, comparing computation of our new governing equations with the corresponding agent-based model for the case of cells that randomly move and absorb chemical from the surrounding environment. \cso{Additional commentary is given for several limiting cases in Section \ref{sec: scaling}.}
\\ \\
\textbf{Notation}. $\N$ denotes the natural numbers and $\R^n$ denotes $n$-dimensional Euclidean space. All vectors will be denoted with bold face, i.e.  $\mathbf{x}=[x_1,x_2,\ldots,x_n]^T$ when $\x\in\R^n$ and the superscript $T$ denotes the vector transpose. The $L^p$ space is defined by generalizing the $p$-norm for vector spaces $\R^n$, whereas $C^k$ defines the space of continuously differentiable functions up to the $k^{th}$ derivative. For ease of notation, we define $\Omega^n = \R^n \times [0,\infty)$, the spatial and absorption domains. The convolution of two functions $f$ and $g$ is denoted as $f*g$ and we define $*^m$ as
\[
f*^m g
\equiv
\underbrace{f*f* \cdots *f}_{m \text{ times}} * g.
\]
We will use $\delta(\cdot)$ to denote the Dirac delta distribution. To simplify, we will denote 
$\norm[1]{\cdot} \equiv \int_{\Omega^n} | \cdot | \,d\xi\,d\x$ and $\norm[2]{\cdot} \equiv \int_{\Omega^n}(\cdot)^2 \,d\xi\,d\x$. The error function is defined as $erf(z)\equiv(2/\sqrt{\pi})\int_0^ze^{-t^2}dt$. The evaluation of $\lfloor d \rfloor$ gives the greatest integer that is less than or equal to $d$.

\section{Problem formulation and preliminaries}\label{sec:prelim}
Suppose a spatial domain contains a spatially-varying (or time-varying) chemical concentration.  For simplicity, we will assume the chemical concentration is a positive, spatially-dependent distribution, $C(\mathbf{x})$.
We then insert a cell in this domain at location $\x_0$ - to avoid semantic confusion later in this paper, we will refer to this cell as an agent.  A schematic of this setup is shown in Fig.~\ref{fig:schematic}.
This agent has a given probability of moving at each time point. In the 2-dimensional setup, the agent may remain stationary or move left, right, up, or down as shown with the dotted arrows in Fig.~\ref{fig:schematic}. 
After moving, the agent will absorb a certain amount of chemical according to a function $\hat{\beta}(x)$, which depends on the local chemical concentration.  
Although we are not fixing a specific form of the function $\hat{\beta}(x)$ for our analysis, we will assert that this function preserves the property that if $C(x)>0$, then $\hat{\beta}(x)>0$.
When the cumulative absorption within the agent reaches a critical threshold, the agent changes state (e.g. the agent dies).

\begin{figure}[H]
     \centering
     \includegraphics[width=0.4\textwidth]{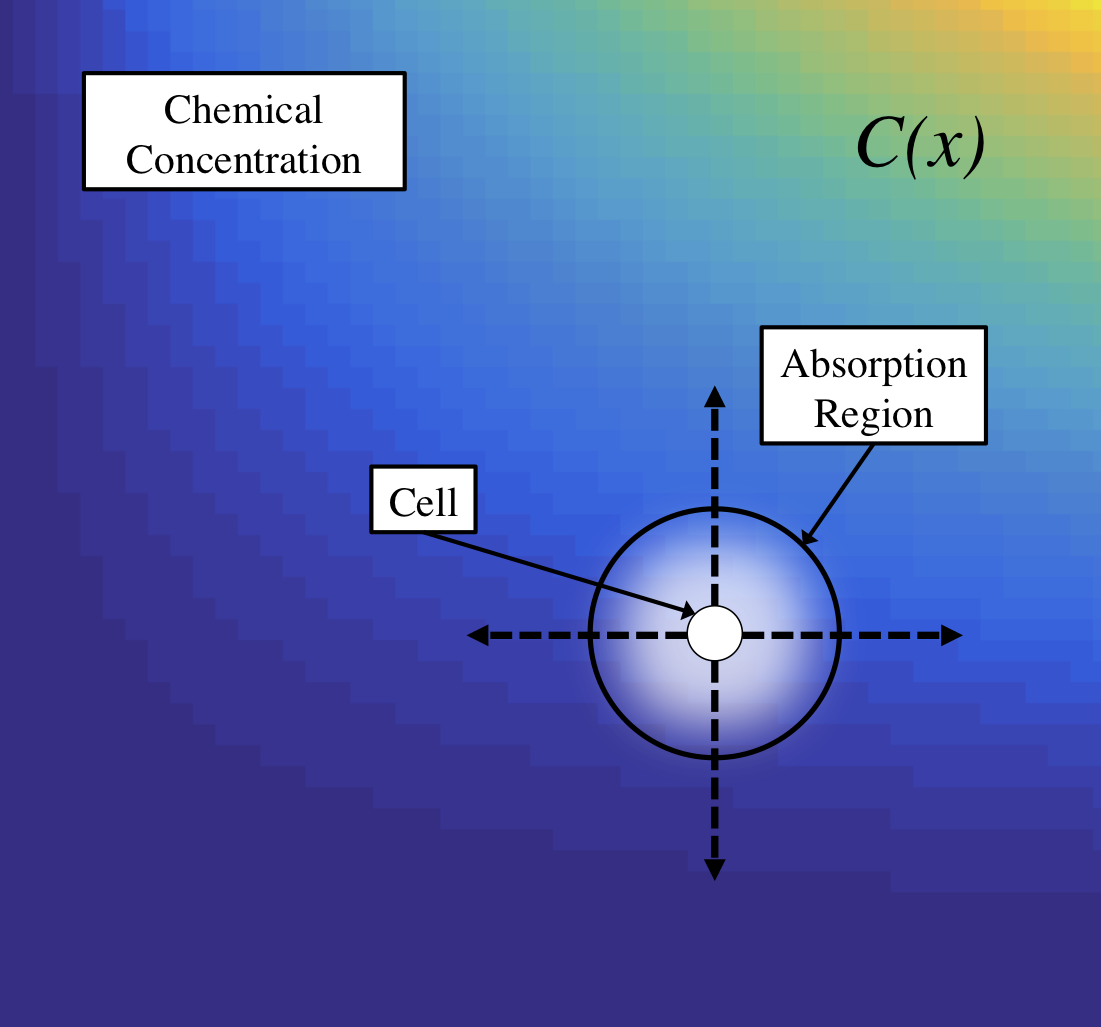}
     \caption{Schematic of an agent \cso{(the cell)} in a 2-dimensional domain that has a spatially dependent chemical profile. At each time step, the agent will move and then absorb chemical from the local, surrounding absorption region.}\label{fig:schematic}
\end{figure}
At first glance, we may consider the agent in our absorption model as having only two states: live and dead.  However, it is subtly more complicated.
Considering that the chemical concentration could be spatially and/or time-dependent, the particular path along which an agent travels may affect the amount of chemical the agent absorbs.  
For example, suppose there is no chemical concentration to the left of $\x_0$ and there is chemical to the right of $\x_0$.  Further, consider two distinct paths the agent may travel: in one path, the agent is contained within the left side of the domain and terminates at $\x_0$ at time $t$, whereas in another path, the agent is contained within the right side of the domain and terminates at $\x_0$ at $t$.  The agent which traveled along the first path will not have absorbed any chemical by time $t$, but the agent which traveled along the second path will absorb some chemical particles.

We need to account for varying amounts of chemical concentration by treating the amount absorbed as distinct states.  
However, as opposed to a compartmental model, the chemical concentration is a continuous variable.  
In order to account for this, we need to consider cumulative absorption as a dimension orthogonal to both the temporal and spatial dimensions.  
That is, the agent is at a location $\x$, having a cumulative chemical absorption $\xi$, at a particular time $t$.

The cumulative amount absorbed is path dependent. 
However, we cannot say that $\xi$ is dependent on space or time, just as we cannot say that $\x$ is dependent on time.  
All three variables are linked by our model, but should be considered independent.


\section{The continuum model}\label{sec:model}
Through upscaling, we will derive the continuum absorption model. We develop a discrete difference equation from the agent-based model (ABM), and then take the continuum limit, an approach used for standard random walk models \cite{Codling08}.
At each iteration, our ABM agent $\u$ moves in the domain with spatial-step $\dx$ and time-step $\dt$.  
Then, $\u$ absorbs chemical particles at its new location $\x$ based on a spatially-dependent function $\hat{\beta}(\x)$ that depends on the chemical distribution, $C(\x)$.  \cmy{We interpret $\hat{\beta}(\x)$ as the amount the cell absorbs during the time step of length $\dt$.}

\subsection{Derivation \cmy{of single-state absorption model}}\label{sec: Derivation}

Assume the ABM agent $\u$ is initialized in the 1-D spatial domain $\R$.  
We define the variable $U(x,t,\xi)$ to denote the density of agent $\u$ at location $x$ at time $t$, having absorbed $\xi$ total particles.
Suppose $\u$ moves a distance $\dx$ to the right to be at $x$ at the next time increment, $t+\dt$.  
We then have that $\u$ absorbs $\hat{\beta}(x)$ chemical particles at $x$, with a cumulative absorption of $\xi + \hat{\beta}(x)$ particles at time $t+\dt$ (when $\u$ had a cumulative absorption of $\xi$ at time $t$).  Similarly, if $\u$ moves a distance $\dx$ to the left to be at $x$ at the next time increment, then $\u$ will have cumulatively absorbed $\xi + \hat{\beta}(x)$ particles.
Letting $\ell(x)$ be the probability of moving left at location $x$ and $r(x)$  the probability of moving right at $x$, we can assert a difference equation modeling this behavior:
\begin{equation}
\begin{split}
U(x,t+&\dt,\xi+\hat{\beta}(x)) =
\ell(x+\dx)
U(x+\dx,t,\xi)  \\
& + r(x-\dx)U(x-\dx,t,\xi) +
[1-r(x)-\ell(x)]U(x,t,\xi),
\end{split}
\label{eq: discrete equation}
\end{equation}
\noindent where $\ell(x)+r(x)\leq 1$ for all $x\in\R$.
The $U(x,t+\dt,\xi+\hat{\beta}(x))$ term expresses the fact that agent $\u$ absorbs an additional $\hat{\beta}(x)$ chemical particles at location $x$ and time $t+\dt$.  
The right-hand side of \eqref{eq: discrete equation} accounts for all the different possible ways (along with their respective probabilities) that agent $\u$, having absorbed $\xi$ total number of chemical particles at time $t$ can be at location $x$ at time $t+\dt$. 

Assuming $U\in C^2(\R, [0,\infty), [0,\infty))$ and $q(x)\in C^2([0,1])$ for $q(x)$ as either $\ell(x)$ or $r(x)$, we can perform a Taylor expansion on $U$ and the moving probability $q(x)$ in  \eqref{eq: discrete equation} and get
\begin{align*}
U(x,t+\dt,\xi+\hat{\beta}(x)) &=
U(x,t,\xi) + \dt \pder[]{t}U(x,t,\xi) + \hat{\beta}(x)\pder[]{\xi}U(x,t,\xi) \\
&\qquad + \mathcal{O}(\dt^2, \hat{\beta}(x)^2),
\\
U(x\pm \dx,t,\xi) &= U(x,t,\xi) \pm \dx \pder[]{x}U(x,t,\xi) + \frac{\dx^2}{2}\pderr[]{x}U(x,t,\xi) \\
&\qquad + \mathcal{O}(\dx^3),\\
q(x+\dx)&=q(x)+\dx\pder[]{x}q(x)+\frac{\dx^2}{2}\pderr[]{x}q(x)+ \mathcal{O}(\dx^3).
\end{align*}
Inserting these expansions into \eqref{eq: discrete equation} results in
\begin{align*}
U + \dt \pder[U]{t} + \hat{\beta}(x)&\cmy{\pder[U]{\xi}} + \mathcal{O}\left(\dt^2, \hat{\beta}(x)^2\right)
= [\ell U] + \dx \pder[(\ell U)]{x} + 
\frac{\dx^2}{2}\pderr[(\ell U)]{x} 
\\
& + [r U] - \dx \pder[(r U)]{x} + \frac{\dx^2}{2}\pderr[(r U)]{x} \\
& +
[1-\ell-r]U +
\mathcal{O}\left(\dx^3 \right).
\end{align*}
Rearranging terms and simplifying gives us
\begin{align*}
\dt \pder[U]{t} + \hat{\beta}(x)&\pder[U]{\xi} = 
\dx \pder[(\ell-r)U]{x}  
\\
& + \frac{\dx^2}{2} \pderr[(\ell+r)U]{x} +
\mathcal{O}\left(\dx^3, \dt^2, \hat{\beta}(x)^2\right).
\end{align*}
\noindent If we define $\beta(x) = \hat{\beta}(x)/\dt$ and rearrange terms, we have
\begin{small}
\begin{equation}
\cmy{\pder[U]{t} + \beta(x)\pder[U]{\xi}}  = 
\frac{\dx}{\dt}\pder[(\ell-r)U]{x} +
\frac{\dx^2}{2\dt}\pderr[(\ell+r)U]{x}+
 \mathcal{O}\left(\dx^3, \dt,  \hat{\beta}(x)^2\right).
 \label{eq: biasedPDE}
\end{equation}
 \end{small}
 
For simplification, we will let $\ell(x)=r(x)=1/2$ for all $x\in \R$.  Our equation then reduces to:
\begin{equation}
U_t + \beta(x)U_\xi  = 
\frac{\dx^2}{2\dt}U_{xx} +
 \mathcal{O}\left(\dx^3, \dt,  \hat{\beta}(x)^2\right).
\end{equation}
We assume that $\dt \sim \dx^2$ and $ \hat{\beta}(x)= \mathcal{O}(\dt)$.  
Taking the limit as $\dt, \dx \to 0$ results in the following governing continuum equation:
\begin{equation}
U_t + \beta(x)U_\xi = DU_{xx},
\end{equation}
where $D = \lim_{\dt,\dx\to 0}\dx^2/(2\dt)$.
For this paper, we assume far-field boundary conditions and the initial condition can depend on $x$ and $\xi$. 

Hence, our partial differential equation (PDE)
for chemical absorption is as follows
\begin{equation}
\begin{cases}
& U_t + \beta(x)U_\xi = DU_{xx}, \hspace{1.5cm} (x,\xi)\in \Omega^1, t>0\\
& U = \phi(x,\xi), \hspace{3.15cm} (x,\xi)\in \Omega^1, t=0 \\
& \lim_{|x|\to \infty}U = 0, \hspace{1.7cm}\qquad (x,\xi)\in \Omega^1, t>0.
\end{cases}
\label{eq: PDE1d}
\end{equation}
\noindent In a similar way, assuming that the spatial step $\dx$ is the same in every direction, we can derive a continuum PDE in $n$ spatial dimensions.  The resulting PDE is as follows:
\begin{equation}
\begin{cases}
& U_t + \beta(\x)U_\xi = D_n \nabla^2 U, \qquad (\x,\xi)\in \Omega^n, t>0 \\
& U = \phi(\x,\xi), \hspace{2.85cm} (\x,\xi)\in \Omega^n, t=0 \\
& \lim_{|\x|\to \infty}U = 0, \hspace{2.25cm} (\x,\xi)\in \Omega^n, t>0,
\end{cases}
\label{eq: PDE}
\end{equation}
where $D_n = \lim_{\dx,\dt\to 0}\dx^2 / (2n\dt)$. 
\cmy{Here we should note that the above equations \eqref{eq: PDE1d} and \eqref{eq: PDE} only model cumulative absorption, without taking into account any state transitions due to this cumulative absorption.  We address two possible methods to account for state changes in the following subsection.}

\subsection{Absorption threshold}\label{sec: Absorption}
Now that we have a governing equation for tracking the cumulative absorption property of an agent, we can address possible absorption-dependent state-changes.
Suppose the agent changes state if the cumulative chemical absorption is greater than some absorption capacitance, $\xi_c$. That is, a cell is initially in the live state if $\xi<\xi_c$ and switches to a different state, possibly dying, if $\xi\geq \xi_c$.  We will denote the spatially-varying, total density of agents in the live state as $p(\x,t)$.  

\cmy{We have two possible methods\footnote{\cmy{We consider methods that keep $\beta$ from being dependent on the cumulative absorption amount, $\xi$.  One could derive a formula where $\beta$ depends on both the chemical concentration, $C(x)$, and $\xi$ to ensure that cells do not absorb chemical beyond the threshold.  However, the derivation and resulting Taylor series would produce additional terms that could make the analysis and numerical solutions more difficult.}
} 
for calculating the density of cells in a live state: by modifying our PDE model or by solving the PDE model and integrating $\xi$ over the domain of interest, $[0, \xi_c]$.} \cso{
First, we will examine modifying our PDE model and in this case, we will use the variable $V(x,t,\xi)$ to denote the density of agents in the live state (rather than the variable $U$ to avoid confusion in later sections).} \cmy{ Our difference equation includes a Heaviside function centered at $\xi_c$, since the agent switches state if $\xi \geq \xi_c$,
\begin{equation}
\begin{split}
V(x,t+&\dt,\xi+\hat{\beta}(x)) =
\ell(x+\dx)
V(x+\dx,t,\xi)  \\
& + r(x-\dx)V(x-\dx,t,\xi) +
[1-r(x)-\ell(x)]V(x,t,\xi) \\
& - H(\xi-\xi_c)V(x,t,\xi),
\end{split}
\label{eq: discrete equation V}
\end{equation}
where $H(\xi-\xi_c) = \begin{cases}1 &: \text{ if }\xi \geq \xi_c \\
0 &: \text{ otherwise} \end{cases}$ denotes the Heaviside function centered at $\xi_c$.  With the same assumptions as in the previous section and expanding in a Taylor series, we obtain the following PDE modeling live agents following an unbiased random walk:
\begin{equation}
\begin{cases}
& V_t + \beta(x)V_\xi = DV_{xx} - \tilde{H}(\xi-\xi_c)V, \hspace{1.5cm} (x,\xi)\in \Omega^1, t>0\\
& V = \phi(x,\xi), \hspace{5.55cm} (x,\xi)\in \Omega^1, t=0 \\
& \lim_{|x|\to \infty}V = 0, \hspace{4.15cm}\qquad (x,\xi)\in \Omega^1, t>0,
\end{cases}
\label{eq: PDEV1d}
\end{equation}
where $\tilde{H}(\xi-\xi_c) = \lim_{\dt \to 0} \frac{1}{\dt}H(\xi-\xi_c) =
\begin{cases}
+\infty &: \text{ if } \xi \geq \xi_c \\
0 &: \text{ otherwise}
\end{cases}$ is the result of taking the continuum limit.  
We can then calculate the spatially-variable, total density of cells in the live state as
\begin{equation}
p(x,t) = \int_0^\infty V(\x,t,\xi)\,d\xi.
\end{equation}
This method offers us the ability to develop more complex models, such as when $\beta$ can be negative, when the transition is stochastic (in which case we would substitute the Heaviside function with a probability distribution), or when we are interested in modeling agents in the secondary state.} \cso{In this case, we can more easily separate different populations and capture additional state change driven dynamics (e.g. where movement rules could depend on the given state).}

\cso{The alternate approach, which is the focus of this paper, is suitable in the case where we are only interested in the live agents and  there is a single deterministic state transition (when $\xi>\xi_c$) and $\beta>0$. In this case,} \cmy{it is possible to calculate the spatially-variable, total density, $p$, directly from the absorption model, $U$ as}
\begin{equation}
p(\x,t) = \int_0^{\xi_c} U(\x,t,\xi)\,d\xi.
\label{eq: livestate}
\end{equation}
If we initialize $\int_{\Omega^n}U(\x,0,\xi)\,d\xi\,d\x = 1$, then we can consider $p(\x,t)$ the probability that an agent is at location $\x$ at time $t$ and in the initial live state\footnote{If we want to find the probability an agent is at a particular location at a given time, given that the agent is in the initial live state, we can calculate:
\[
\prob(\x,t) = 
\frac{\int_0^{\xi_c}U(\x,t,\xi)\,d\xi}
{\int_0^\infty U(\x,t,\xi)\,d\xi}
= \frac{p(\x,t)}{\int_0^\infty U(\x,t,\xi)\,d\xi}.
\]
}.

We can rewrite \eqref{eq: PDE} as a PDE of $p$.  Let us integrate the terms from 0 to $\xi_c$ with respect to $\xi$.  This gives us
\[
\int_0^{\xi_c}U_t \,d\xi
+ \int_0^{\xi_c}\beta(\x)U_\xi \,d\xi
=
\int_0^{\xi_c}D_n \nabla^2 U\,d\xi.
\]
Given $U\in L^1(\Omega^n)$, we can switch derivatives and integrals using Fubini's theorem.  The above system reduces to a non-homogeneous diffusion equation
\begin{equation}
\begin{cases}
& p_t - D_n \nabla^2 p = f(\x,t) , \qquad \x\in \R^n, t>0 \\
& p(\x,0) = g(\x) , \hspace{2.15cm} \x\in \R^n, t=0\\
& \lim_{|\x|\to \infty} p = 0, \hspace{2.1cm} \x\in \R^n, t>0,
\end{cases}
\end{equation}
where $f(\x,t) = -\beta(\x)U\Big|_{\xi=\xi_c}$ and $g(\x) = \int_0^{\xi_c}\phi(\x,\xi)$.  
If we know the value of $f$, then we have an explicit solution for $p$ using the method of Green's functions (fundamental solutions).
In most cases we will not have the explicit value of $f(\x,t)$, in which case we must first solve for $U(\x,t,\xi)$ before integrating to compute $p(\x,t)$.

We can use the value of $p$ to calculate cellular properties of interest, such as flux out of the initial live state or the average time in the initial live state.

\subsection{Mean occupancy time}
We may be interested in the mean time an agent is in the initial live state, which is denoted as the mean occupancy time (MOT).  
In a manner similar to deriving the mean first passage time, this is the first moment of the total flux out of a particular state.

The total flux out of the initial state can be computed as
\[
F(\x,t) = -\pder[]{t}\int_{\R^n}p(\x,t)\,d\x.
\]
The negative sign is due to the fact that we are tracking the density exiting the initial state.  It follows that the MOT is
\begin{equation}
M = \int_0^\infty t F(\x,t)\,dt.
\end{equation}
Since $p\in L^1(\R^n)$ and for any finite location $\x\in \R^n$, $\lim_{t\to\infty}p(\x,t)=0$, we can use integration by parts to derive the MOT,
\begin{equation}
M = \int_0^\infty \int_{\R^n} p(\x,t)\,dx\,dt.
\end{equation}

\section{Mathematical analysis}\label{sec:analysis}
Through the derivation of this continuous approximation, higher order terms in the Taylor series expansions were neglected. We must still ensure that we are maintaining the proper physics with this new equation. For example, we wish that energy in the system is not increasing and that the total quantity of agents or cells is conserved. In addition, since the governing equation \eqref{eq: PDE} is classified as a mixed Parabolic-Hyperbolic PDE,  there is no generalized theorem we can apply to show it is well-posed.
To this end, Theorems 3-5 in this section will prove existence, uniqueness, and continuous dependence on initial data, respectively.

\subsection{Energy \& conservation}\label{sec: Conservation}
In order to prove uniqueness and the continuous dependence of the PDE solution on initial data, we need to show that there is some time-dependent functional $E(t)$, such that our solution $U$ of \eqref{eq: PDE} satisfies $0\leq E(t) \leq E(0)$ for all $t>0$.  We will refer to this functional as the energy of the solution at time $t$.  To match the physics of the ABM simulation, the energy of our PDE should be non-increasing, but we need to prove that our PDE does not lose this feature during the process of deriving the continuum approximation.
\begin{theorem}
Suppose $\beta(\x)>0$ for all $\x\in \R^n$.  
The PDE \eqref{eq: PDE} with the energy functional\hspace{.08cm}\footnote{The energy functional is not meant to be interpreted as physical energy (i.e. it is not kinetic or potential energy), it is a naming convention that is consistent with classical PDE theory \cite{Evans10}.}
$E(t) = \frac{1}{2} \norm[2]{ U }$
satisfies the inequality
$0 \leq E(t) \leq E(0)$.
\label{thm: Energy}
\end{theorem}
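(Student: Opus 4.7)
My approach is the classical energy method: differentiate $E(t)$ in time, substitute the PDE for $U_t$, integrate by parts in each spatial variable, and analyze the resulting boundary contributions. The positivity hypothesis on $\beta$ is what will eventually control the boundary behavior in the cumulative-absorption variable.

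First I would justify interchanging $d/dt$ and the integral in $\xi$ and $\mathbf{x}$ (using that $U \in L^2(\Omega^n)$ and is sufficiently smooth, consistent with the regularity in Section \ref{sec: Derivation}), so that
\[
\frac{dE}{dt} = \int_{\Omega^n} U\, U_t \, d\xi\, d\mathbf{x} = D_n \int_{\Omega^n} U \nabla^2 U \, d\xi\, d\mathbf{x} - \int_{\Omega^n} \beta(\mathbf{x}) U U_\xi \, d\xi\, d\mathbf{x}.
\]
For the diffusion term I would integrate by parts in $\mathbf{x}$; the surface terms at spatial infinity vanish by the far-field condition $\lim_{|\mathbf{x}|\to\infty} U = 0$ (together with decay of $\nabla U$), producing the manifestly non-positive contribution $-D_n \int_{\Omega^n}|\nabla U|^2\, d\xi\, d\mathbf{x}$. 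For the transport-in-$\xi$ term I would use $UU_\xi = \tfrac12(U^2)_\xi$ and integrate in $\xi$ over $[0,\infty)$, leaving a pure boundary expression
\[
-\int_{\Omega^n}\beta(\mathbf{x})U U_\xi\, d\xi\, d\mathbf{x} = \tfrac{1}{2}\int_{\mathbb{R}^n}\beta(\mathbf{x})\bigl[U^2(\mathbf{x},t,0) - U^2(\mathbf{x},t,\infty)\bigr]d\mathbf{x}.
\]
Decay at $\xi \to \infty$ (which is consistent with $U \in L^2$ and the transport structure) annihilates the upper limit.

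The main obstacle is the residual term at $\xi = 0$: since $\beta > 0$, its sign is \emph{not} automatically the one we want for a dissipation bound. Here the hypothesis $\beta(\mathbf{x}) > 0$ becomes essential. Because the characteristics of the $\xi$-advection travel strictly into the domain (from $\xi = 0$ toward larger $\xi$) and the diffusion operator acts only in $\mathbf{x}$ and cannot transport mass back to $\xi = 0$, no mechanism deposits mass at the boundary $\xi = 0$ for $t > 0$. Hence $U(\mathbf{x},t,0) = 0$ for all $t > 0$, which I would verify by a brief characteristic argument applied to the mixed PDE on $\Omega^n$. With the boundary contribution eliminated, we arrive at the energy identity
\[
\frac{dE}{dt} = -D_n \int_{\Omega^n}|\nabla U|^2\, d\xi\, d\mathbf{x} \leq 0.
\]
Integrating from $0$ to $t$ gives $E(t) \leq E(0)$, and the lower bound $E(t) \geq 0$ is immediate from the definition $E(t) = \tfrac{1}{2}\norm[2]{U}$, completing the proof.
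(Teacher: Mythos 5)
Your proposal is correct and follows essentially the same route as the paper's proof: differentiate the energy, integrate by parts in $\x$ to obtain the dissipative term $-D_n\int_{\Omega^n}|\nabla U|^2\,d\xi\,d\x$, and kill the $\xi$-boundary contributions using decay as $\xi\to\infty$ together with $U(\x,t,0)=0$ for $t>0$, which both arguments attribute to $\beta(\x)>0$. If anything, your characteristic-based justification for why no mass reaches $\xi=0$ is slightly more explicit than the paper's one-line assertion of the same fact.
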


\begin{proof}
Via a calculation,
\begin{align*}
\frac{dE}{dt} &= \int_{\Omega^n} U U_t\,d\xi\,d\x
=
 \int_{\Omega^n} U[-\beta(\x)U_\xi + \cmy{D_n} \nabla^2 U]\,d\xi\,d\x
\\
&=
- \int_{\Omega^n} \beta(\x)UU_\xi\,d\xi\,d\x
+ D_n  \int_{\Omega^n} U\nabla^2 U\,d\xi\,d\x.
\end{align*}
First, integration by parts in the variable $\xi$ gives
\[
 \int_{\Omega^n} \beta(\x)UU_\xi \,d\xi\,d\x = 
\int_{\R^n} \left[\beta(\x)U^2\Big|_{\xi=0}^\infty\right]\,d\x -
 \int_{\Omega^n} \beta(\x) UU_\xi\,d\xi\,d\x.
\]
We assume that for any finite $t>0$ that $U=0$ as $\xi\to \infty$.  Given $\beta(\x)>0$, then $U=0$ at $\xi=0$ for any $t>0$.  Thus, $ \int_{\Omega^n} \beta(\x)UU_\xi \,d\xi\,d\x = 0$.
Second, by the Divergence product rule,
\[
\int_{\R^n} |\nabla U|^2 \,d\x = \int_{\partial \R^n}U\nabla U \cdot \hat{\eta}\,d\x - \int_{\R^n} U\nabla^2 U\,d\x,
\]
where $\hat{\eta}$ is the unit outward normal vector.
Considering $\lim_{|\x|\to \infty}U = 0$, we have that
\[
 \int_{\Omega^n} U\nabla^2 U\,d\xi\,d\x = 
- \int_{\Omega^n} |\nabla U|^2\,d\xi\,d\x.
\]
Therefore, we have that for every $t>0$,
\[
\frac{dE}{dt} = -D_n  \int_{\Omega^n}|\nabla U|^2\,d\xi\,d\x \leq 0.
\]
Seeing that $\frac{dE}{dt}\leq 0$, we have $0\leq E(t) \leq E(0)$.

\end{proof}
\cmy{Note that the energy functional $E_V(t) = \frac{1}{2} \norm[2]{V}$ also satisfies the inequality $0\leq E_V(t) \leq E_V(0)$ since for every $t>0$,
$\int_{\Omega^n}\tilde{H}(\xi-\xi_c)U^2\,d\xi\,d\x \geq 0$.}

In the ABM simulation, no agent is removed from the system.  Again, we want the PDE solution to match the important physics of the ABM simulation.  We do so by proving that the solution $U$ is conserved at each time $t$ over the entire domain $\Omega^n$.
\begin{theorem}(Conservation)
Suppose $U\in L^1(\Omega^n)$ solves \eqref{eq: PDE} and $\beta(\x)>0$ for all $\x\in \R^n$.  Then $\int_{\Omega^n} U\,d\xi\,d\x = \int_{\Omega^n}\phi(\x,\xi)\,d\xi\,d\x$ for any $t>0$.
\end{theorem}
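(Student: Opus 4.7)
The plan is to show that the spatial–absorption integral of $U$ is constant in time by differentiating under the integral sign and showing the resulting flux terms vanish, so the value at any $t>0$ equals the initial value $\int_{\Omega^n}\phi(\x,\xi)\,d\xi\,d\x$.

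First I would define $Q(t)=\int_{\Omega^n} U(\x,t,\xi)\,d\xi\,d\x$. Using $U\in L^1(\Omega^n)$ together with sufficient smoothness of $U$ (which lets us control $U_t$ via the PDE and its right-hand side), Fubini / dominated convergence permits differentiating under the integral, yielding
\[
\frac{dQ}{dt} \;=\; \int_{\Omega^n} U_t\,d\xi\,d\x \;=\; -\int_{\Omega^n}\beta(\x)\,U_\xi\,d\xi\,d\x \;+\; D_n\int_{\Omega^n}\nabla^2 U\,d\xi\,d\x,
\]
where the PDE \eqref{eq: PDE} has been substituted for $U_t$.

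Next I would kill each of the two terms on the right. For the $\xi$-term, I would integrate by parts in $\xi$ with $\beta(\x)$ pulled out, producing
\[
\int_{\Omega^n}\beta(\x)\,U_\xi\,d\xi\,d\x \;=\; \int_{\R^n}\beta(\x)\,\bigl[U(\x,t,\xi)\bigr]_{\xi=0}^{\xi=\infty}\,d\x,
\]
and then invoke exactly the boundary behavior already used in the proof of Theorem~\ref{thm: Energy}: $U\to 0$ as $\xi\to\infty$ for any finite $t>0$, and because $\beta(\x)>0$ the characteristic analysis forces $U=0$ at $\xi=0$ as well. Both boundary contributions vanish. For the spatial term, I would apply the divergence theorem on $\R^n$ to obtain $\int_{\R^n}\nabla^2 U\,d\x=\int_{\partial\R^n}\nabla U\cdot\hat\eta\,dS$, which vanishes by the far-field decay condition $\lim_{|\x|\to\infty}U=0$ (combined with the implicit decay of $\nabla U$ inherited from $U\in L^1$ and the smoothness of solutions of the heat-type equation).

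Conclude that $dQ/dt\equiv 0$ for all $t>0$, so $Q(t)=Q(0)=\int_{\Omega^n}\phi(\x,\xi)\,d\xi\,d\x$, which is the claim. The main obstacle is justifying the vanishing of the boundary terms, in particular the $\xi=0$ boundary term and the spatial surface integral at infinity; the former is already asserted in Theorem~\ref{thm: Energy} under the positivity hypothesis $\beta(\x)>0$, while the latter requires assuming enough regularity and decay on $\nabla U$ (beyond bare $L^1$ membership) so that Green's identity applies --- this is standard under the far-field boundary condition but should be noted explicitly rather than glossed over.
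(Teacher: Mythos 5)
Your proposal is correct and follows essentially the same route as the paper's proof: differentiate the integral in time, substitute the PDE, and kill the spatial term via the divergence theorem and the $\xi$-term via the boundary values $U=0$ at $\xi=0$ (from $\beta>0$) and $\xi\to\infty$. Your added remark about needing decay of $\nabla U$ beyond bare $L^1$ membership is a fair point that the paper glosses over, but it does not change the argument.
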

\begin{proof}
By means of a calculation,
\begin{align*}
\pder[]{t}\int_{\Omega^n} U \,d\xi\,d\x
&= \int_{\Omega^n} \pder[U]{t} \,d\xi\,d\x
= \int_{\Omega^n} \left\{ D_n \nabla^2 U - \beta(\x)U_\xi \right\} \,d\xi\,d\x
\\
&= D_n \int_{\partial \Omega^n}  \nabla U \cdot \hat{\eta} \,dS
- \int_{\R^n} \beta(\x)\left[ U\Big|_{\xi=0}^{\infty} \right]\,d\x.
\end{align*}
Since $\lim_{|\x|\to\infty}U=0$ we have that the first term is 0.
Also, given $\beta(\x)>0$ for all $x\in\R^n$, then for $t>0$ we have $U(\x,t,\xi=0)=0$, and the second term is also 0.  
It follows that $\pder[]{t}\int_{\Omega^n} U\,d\xi\,d\x = 0$.
Therefore, $U$ is conserved.
\end{proof}

\subsection{Operator-splitting semi-discrete solution}

We will approximate a solution to the PDE in \eqref{eq: PDE} by splitting the linear operator and then solving the resulting system iteratively.  This gives us a solution that is discrete in time and continuous in spatial and absorption dimensions. 
We will first derive this semi-discrete solution and then show that it is well-posed.

Let $U = \hU(\x,t|\xi)  \bU(\xi,t|\x)$, where $\hU$ leaves $\xi$ fixed and $\bU$ leaves $\x$ fixed.  We can see that
$\bU\hU_t + \hU\bU_t + \beta(\x)\hU\bU_\xi = D_n \bU\nabla^2 \hU$ and
it follows that
$\bU \left(
\hU_t - D_n \nabla^2 \hU
\right) +
\hU \left( 
\bU_t + \beta(\x)\bU_\xi
\right)
=0.$
Assuming that $\bU$ and $\hU$ are not identically 0, we can then solve the following PDEs
\begin{equation}
\begin{cases}
& \hU_t - D_n \nabla^2 \hU = 0, \qquad \x\in \R^n, t>0 \\
& \hU = \hat{\phi}(\x|\xi), \hspace{1.9cm} \x\in \R^n, t=0 \\
& \lim_{|\x|\to \infty}\hU = 0, \hspace{1.25cm} \x\in \R^n, t>0,
\end{cases}
\label{eq: spatial}
\end{equation}
\begin{equation}
\begin{cases}
& \bU_t - \beta(\x) \bU_\xi = 0, \qquad \xi \in [0,\infty), t>0 \\
& \bU = \overline{\phi}(\xi|\x), \hspace{1.85cm} \xi \in [0,\infty), t=0.
\end{cases}
\label{eq: absorption}
\end{equation}

We solve the system in \eqref{eq: spatial} using the method of Green's functions\footnote{
The Green's function used in this paper is the fundamental solution of the diffusion equation.  However, for other spatial domains, such as a half plane or disk, we can use the method of images with the fundamental solution to derive an appropriate Green's function.} 
and convoluting with the initial condition:
\begin{equation}
\hU = G(\x,t) * \hat{\phi}(\x|\xi), \quad \forall \xi \geq 0, t>0,
\label{eq: SolveGrn}
\end{equation}
where 
\begin{equation}
G(\x,t) = \frac{1}{(4\pi D_n t)^{n/2}}\exp\left\{-\frac{|\x|^2}{4D_n t}\right\}, \qquad t>0,
\label{eq: FundamentalSol}
\end{equation}
is the fundamental solution of the diffusion equation in $\R^n$.
We solve \eqref{eq: absorption} using the method of characteristics:
\begin{equation}
\bU = \overline{\phi}(\xi-\beta(\x)t | \x), \quad
\forall \x \in \R^n, \qquad t>0.
\label{eq: SolveAbs}
\end{equation}

Our solution of \eqref{eq: PDE} alternates between \eqref{eq: SolveGrn} and \eqref{eq: SolveAbs} as the solution marches forward in time.  As we are not solving the system simultaneously, we will choose a length of time, $0< \tau \ll 1$, in which each solution is valid.  We will denote the solution at time $t=m\tau$ as $U^m (\x,\xi)$. The following iterative algorithm solves the semi-discrete, operator splitting system:
\begin{itemize}
\item Initialize $U^0(\x,\xi) = \phi(\x,\xi)$
\item For $m=1,2,\ldots$:
\begin{itemize}
\item[$\lozenge$] $\bU^{m-1}(\x|\xi) = U^{m-1}(\x,\xi)$
\item[$\lozenge$] $\hU^{m}(\xi|\x) = \bU^{m-1}(\x|\xi-\beta(\x)\tau)$
\item[$\lozenge$] $U^{m}(\x,\xi) = 
G(\x,\tau) * \hU^{m}(\xi|\x)$
\end{itemize}
\end{itemize}
Combining these solutions gives us the recurrence relation for the semi-discrete solution with time step $\tau$:
\begin{equation}
U^{m+1}(\x,\xi) = G(\x,\tau) * U^{m}(\x,\xi-\beta(\x)\tau).
\label{eq: semidiscrete}
\end{equation}
Additionally, we can use our recurrence relation to rewrite the solution at $t=m\tau$ in terms of the initial condition $\phi(\x,\xi)$, given as
\begin{equation}
U^m(\x,\xi) = 
G(\x,\tau)*^m \phi(\x,\xi-\beta(\x)m\tau) , \quad \forall (\x,t)\in\Omega^n. 
\label{eq: semidiscreteiterate}
\end{equation}

\subsection{Existence}\label{sec: Existence}
Our semi-discrete solution for $U^m(\x,\xi)$, given in \eqref{eq: semidiscreteiterate}, depends on the recurrence time step $\tau$ and the number of iterations $m$.  So, we define 
\begin{equation}
z_{m,\tau}(\x,\xi) \equiv G(\x,\tau)*^m \phi(\x,\xi-\beta(\x)m\tau) 
\label{eq: semidiscretez}
\end{equation}
as the approximation\cmy{\footnote{
\cmy{We could also make a similar proof for approximating V with 
$z_{m,\tau}(\x,\xi) \equiv G(\x,\tau)*^m \left[\phi(\x,\xi-\beta(\x)m\tau)\mathbf{1}_{[0,\xi_c)}(\xi-\beta(\x)m\tau - \xi_c)\right]$.  The indicator function is a result of solving the same operator splitting system as U with the additional equation
\[
\begin{cases}
\tilde{V}_t = -\tilde{H}(\xi-\xi_c)\tilde{V}, \quad &\x\in \R^n, \xi \in [0,\infty), t>0 \\
\tilde{V} = \bar{V}, \quad &\x\in \R^n, \xi \in [0,\infty), t=0. 
\end{cases}
\]}
}} 
of $U(\x,m\tau, \xi)$, accounting for the choices of both $\tau$ and $m$ (the same notation as used in \cite{Blank05}).
We want to show that the $L^1(\Omega^n)$ limit, $\mathcal{U}(\x,t,\xi)$,  of the sequence $\left\{z_{m,t/m}\right\}_{m \in \N}$ exists.  That is, the limit of the recurrence relation for a time $t$ exists when the recurrence time step, $\tau=t/m$, approaches 0.  
If this limit exists, it proves the existence of a solution, $U(\x,t,\xi)$, to the governing PDE, given in \eqref{eq: PDE}.  

For the $L^1$ limit to make sense, we first need to show that $z_{m,\tau}\in L^1(\Omega^n)$.
\begin{lemma}
Suppose $\phi \in L^1(\Omega^n) $ and 
$z_{m,\tau}(\x,\xi) = G(\x,\tau)*^m \phi(\x,\xi-\beta(\x)m\tau)$ for all $m\in \N, \tau>0$.  
Then $z_{m,\tau}(\x,\xi) \in L^1(\Omega^n)$, for all $\x\in \R^n, \xi\geq 0$.
\label{lemma: L1}
\end{lemma}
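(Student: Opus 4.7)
The goal is to bound $\norm[1]{z_{m,\tau}}$ in terms of $\norm[1]{\phi}$. The strategy rests on three facts: (i) $\int_{\R^n} G(\x,\tau)\,d\x = 1$ for every $\tau>0$, so convolution with $G(\cdot,\tau)$ in the spatial variable is an $L^1(\R^n)$--contraction; (ii) all integrands in sight can be replaced by their absolute values, so Tonelli's theorem freely permits swapping the orders of integration and convolution; and (iii) the hypothesis $\beta(\x)>0$ combined with the natural support of $\phi$ in $\xi\geq 0$ makes the $\xi$--shift reversible by a one--dimensional change of variables.

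First I would introduce the shifted profile $\Psi(\x,\xi):=\phi(\x,\xi-\beta(\x)m\tau)$, extended by zero whenever $\xi-\beta(\x)m\tau<0$. With this convention, $z_{m,\tau}(\cdot,\xi) = G(\cdot,\tau) *^m \Psi(\cdot,\xi)$, where the convolution is taken in the spatial variable alone, with $\xi$ held fixed as a parameter.

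Next, for each fixed $\xi \geq 0$ I would apply Young's convolution inequality iteratively in $\x$. Since $\int_{\R^n} G(\x,\tau)\,d\x = 1$, an $m$--fold induction produces
\[
\int_{\R^n} \bigl| z_{m,\tau}(\x,\xi) \bigr| \,d\x \;\leq\; \int_{\R^n} |\Psi(\y,\xi)| \,d\y .
\]
Integrating this bound over $\xi \in [0,\infty)$ and invoking Tonelli to commute the $\y$-- and $\xi$--integrals yields
\[
\norm[1]{z_{m,\tau}} \;\leq\; \int_{\R^n} \int_0^{\infty} \bigl| \phi(\y,\xi-\beta(\y)m\tau) \bigr| \,d\xi\,d\y .
\]

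Finally, for each fixed $\y$ I would substitute $\eta = \xi - \beta(\y)m\tau$ in the inner integral. Because $\beta(\y)>0$ this is a pure translation, and the zero--extension of $\phi$ collapses the lower limit back to $0$, giving $\int_0^{\infty}|\phi(\y,\eta)|\,d\eta$. Integrating over $\y$ recovers $\norm[1]{\phi}<\infty$, so $z_{m,\tau}\in L^1(\Omega^n)$. The only real subtlety is the bookkeeping around the zero--extension and the Tonelli justifications, both of which are immediate from nonnegativity of the integrands; no deeper analytic input is needed.
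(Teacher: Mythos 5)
Your proof is correct and follows essentially the same route as the paper, which simply invokes closure of $L^1$ under convolution (i.e.\ Young's inequality with $\|G(\cdot,\tau)\|_{1}=1$). Your version is in fact more complete: the paper leaves implicit the step you carry out explicitly, namely that the $\xi$-shift $\xi\mapsto\xi-\beta(\x)m\tau$ preserves the $L^1$ norm of $\phi$ via Tonelli and a change of variables.
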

\begin{proof}
We know that $G(\x,\tau)\in L^1(\Omega^n)$.  By reason that $L^1(\Omega^n)$ is closed under convolution, we have that $G(\x,\tau)*^m \phi(\x,\xi-\beta(\x)m\tau) \in L^1(\Omega^n)$.

\end{proof}

\begin{lemma}
For any $t>0$, $G(\x,t/m)*^{m}\delta(\xi) \to \delta (\x,\xi)$ as $m \to \infty$ in $L^1(\Omega^n)$.
\label{lemma: Convolution}
\end{lemma}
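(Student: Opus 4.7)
My plan is to leverage the semigroup property of the heat kernel together with standard approximation-of-identity arguments, interpreting the spatial convolution on $\Omega^n$ via the identification $G(\x,\tau) \leftrightarrow G(\x,\tau)\delta(\xi)$, which is consistent with the recurrence relation $U^{m+1}(\x,\xi)=G(\x,\tau)*U^m(\x,\xi-\beta(\x)\tau)$ where the convolution acts in the spatial variable only while preserving the $\xi$-structure.

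First, I would apply the heat-kernel semigroup identity $G(\x,s)*_{\x}G(\x,t)=G(\x,s+t)$ inductively, which collapses the $m$-fold $\x$-self-convolution of $G(\x,t/m)$ into a single heat kernel, yielding a closed-form expression for $G(\x,t/m)*^{m}\delta(\xi)$ as a distribution on $\Omega^n$ whose $\x$-marginal is a heat kernel at time comparable to $t$ and whose $\xi$-part is still concentrated via $\delta(\xi)$. Second, since the effective kernel width in $\x$ is driven by $\tau=t/m$, which tends to $0$ as $m\to\infty$, I would invoke the classical fact that $G(\x,\tau)\to\delta(\x)$ as $\tau\to 0^{+}$ in the sense that $\|G(\x,\tau)*_{\x}f-f\|_{L^{1}(\R^{n})}\to 0$ for every $f\in L^{1}(\R^{n})$. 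A Fubini argument combining this spatial convergence with the fixed $\delta(\xi)$ factor then yields convergence to $\delta(\x,\xi)=\delta(\x)\delta(\xi)$.

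The main obstacle is that $\delta(\x,\xi)$ is not literally an element of $L^{1}(\Omega^{n})$, so the $L^{1}(\Omega^{n})$ convergence statement must be interpreted in a dual sense compatible with the norm convention fixed in the Notation section. To make this rigorous, I would pair both sides against a dense family of continuous, compactly supported test functions on $\Omega^{n}$, verify pointwise convergence of these pairings via the classical heat-kernel mollification argument, and then appeal to dominated convergence (with a uniform-in-$m$ $L^{1}$ bound, which follows because $\|G(\x,t/m)*^{m}\delta(\xi)\|_{1}=1$ by mass conservation of the heat kernel) to upgrade to the full $L^{1}$-duality limit. A helpful warm-up before the main estimate would be to confirm $G(\x,t/m)*^{m}\delta(\xi)\in L^{1}(\Omega^{n})$ for each $m$ via Lemma \ref{lemma: L1}, so that each term of the sequence lives in the correct space before passing to the distributional limit.
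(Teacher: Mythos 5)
Your first step is correct mathematics, but it undercuts your second step rather than supporting it. Under your (natural, and consistent with the recurrence) identification $G(\x,\tau)\leftrightarrow G(\x,\tau)\delta(\xi)$, the semigroup identity $G(\cdot,s)*_{\x}G(\cdot,t)=G(\cdot,s+t)$ applied to the $m$ factors of $G(\x,t/m)$ collapses the product to $G(\x,\,m\cdot t/m)=G(\x,t)$, so that
\[
G(\x,t/m)*^{m}\delta(\xi)=G(\x,t)\,\delta(\xi)\qquad\text{for every } m\in\N,
\]
a sequence that is \emph{constant} in $m$. The premise of your second step, that ``the effective kernel width in $\x$ is driven by $\tau=t/m$,'' is therefore false: the width after the collapse is governed by the accumulated time $t$, not by the per-step time $t/m$, because the number of self-convolutions grows at exactly the rate needed to cancel the shrinking of each factor. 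The classical mollifier fact $\lim_{\tau\to 0^{+}}\int_{\R^{n}}\left|\,(G(\cdot,\tau)*f)(\x)-f(\x)\,\right|\,d\x=0$ applies to a \emph{single} convolution with a kernel whose time parameter tends to zero; it does not apply to an $m$-fold convolution whose total time parameter stays equal to $t$. Carried through honestly, your own computation yields $G(\x,t/m)*^{m}\delta(\xi)\to G(\x,t)\,\delta(\xi)\neq\delta(\x,\xi)$, so the argument as proposed cannot close. The peripheral devices you mention (pairing against $C_{c}$ test functions, the mass bound from conservation, Lemma \ref{lemma: L1}) are reasonable bookkeeping but do not touch this central difficulty.

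For comparison, the paper takes a different route: it proves by induction that for each \emph{fixed} number of factors $h$ one has $\lim_{m\to\infty}\norm[1]{G(\x,t/m)*^{h}\delta(\xi)-\delta(\x,\xi)}=0$ (consistent with the semigroup identity, since fixed $h$ gives total time $h\,t/m\to 0$), and then substitutes $h=m$ in the last line. Your semigroup computation is in fact the cleanest way to see that this final diagonal substitution is the crux: convergence for each fixed $h$ does not imply convergence along the diagonal $h=m$, and the closed form $G(\x,t)\,\delta(\xi)$ shows that the diagonal sequence does not concentrate at all. So the gap in your proposal and the gap in the paper's argument are two faces of the same issue, and any repair must either reinterpret what $*^{m}$ means in this lemma or restate the limit (e.g., as convergence to $G(\x,t)\,\delta(\xi)$, which is what the subsequent Cauchy and existence arguments actually appear to need).
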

\begin{proof}
We know that $\lim_{m\to \infty}\int_{\R^n} \left|G(\x,t/m)- \delta(\x)\right|\,d\x=0$, so it follows that
$\lim_{m\to \infty}
\norm[1]{
G(\x,t/m)*\delta(\xi)- \delta(\x,\xi)}
=0$.
First, we will show, via induction, that  $\lim_{m\to \infty}\norm[1]{
G(\x,t/m)*^h \delta(\xi) - \delta(\x,\xi)}
=0$ 
for any $h \in \N$.

As a base case, we will show that $\lim_{m\to \infty}\norm[1]{
(G*G)(\x,t/m)*\delta(\xi) - \delta(\x,\xi)}
=0$.  
By the Dominated Convergence theorem, we can see that
\begin{align*}
\lim_{m\to \infty}G*G(\x,t/m)
&=
\int_{\R^n} \left( \lim_{m\to \infty}G(\x-\y,t/m) \right)
\left(\lim_{m\to \infty} G(\y,t/m)\right)\,d\y
\\
&=
\int_{\R^n} \delta(\x-\y)\delta(\x)\,d\y
=
\delta(\x).
\end{align*}
By calculation,
{\footnotesize
\begin{align*}
\lim_{m\to \infty}\norm[1]{
(G*G)(\x,t/m)*\delta(\xi) - \delta(\x,\xi)}
&= 
\norm[1]{\lim_{m\to \infty}
(G*G)(\x,t/m)*\delta(\xi) - \delta(\x,\xi)}
\\
&= 
\norm[1]{
\delta(\x,\xi)-\delta(\x,\xi) } 
= 0.
\end{align*}
}
Now, we can assume that 
$\lim_{m\to \infty}\norm[1]{
G(\x,t/m)*^h \delta(\xi) - \delta(\x,\xi)}
=0$ 
for some $h\in \N$.  
We can calculate that
{\footnotesize
\begin{align*}
\lim_{m\to \infty}\norm[1]{
G*[G*^h (\x,t/m)\delta(\xi)] - \delta(\x,\xi)}
&= 
\norm[1]{ \lim_{m\to \infty}
G*[G(\x,t/m)*^h \delta(\xi)] - \delta(\x,\xi)
}
\\
&= 
\norm[1]{ \delta(\x,\xi)-\delta(\x,\xi) }
= 0
\end{align*}
}
by the inductive assumption.
It follows that
{\footnotesize
\begin{align*}
\lim_{m\to \infty}
\norm[1]{ G(\x,t/m)*^{h+1} \delta(\xi) - \delta(\x,\xi)
}
&= \lim_{m\to \infty}\norm[1]{
G * [G(\x,t/m)*^{h}\delta(\xi)] - \delta(\x,\xi)
}
\\
& = 
\norm[1]{ \lim_{m\to \infty} 
G * [G(\x,t/m)*^{h}\delta(\xi)] - 
\delta(\x,\xi)
}
\\
& = \norm[1]{
\delta(\x,\xi) - \delta(\x,\xi) }
= 0
\end{align*}
}
for all $h\in \N$.

If we define $h = m$ for $m\in \N$.  Then 
$\lim_{m \to \infty}\norm[1]{
G(\x,t/m)*^{m}\delta(\xi) -\delta(\x,\xi)
}
= 0$. 
\end{proof}

\begin{lemma}
Suppose $\phi \in L^1(\Omega^n)$ and 
$z_{m,t/m}(\x,\xi) = G(\x,t/m)*^m \phi(\x,\xi-\beta(\x)t)$ for any $m\in \N, t>0$, as defined in \eqref{eq: semidiscretez}.  Then 
$\left\{ z_{m,t/m}\right\}_{m\in\N}$ 
is a Cauchy sequence in $L^1(\Omega^n)$.
\label{lemma: Cauchy}
\end{lemma}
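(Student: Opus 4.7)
The plan is to prove that $\{z_{m, t/m}\}_{m \in \N}$ converges in $L^1(\Omega^n)$; completeness of $L^1$ then yields the Cauchy property. I would identify as the candidate limit the function $\mathcal{U}(\x, t, \xi)$ obtained formally by sending $m \to \infty$ in \eqref{eq: semidiscretez}, so that by the triangle inequality
\[
\|z_{m, t/m} - z_{m', t/m'}\|_1 \le \|z_{m, t/m} - \mathcal{U}\|_1 + \|z_{m', t/m'} - \mathcal{U}\|_1,
\]
and it suffices to show $\|z_{m, t/m} - \mathcal{U}\|_1 \to 0$ as $m \to \infty$.

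Using the semi-discrete representation \eqref{eq: semidiscretez}, I would rewrite $z_{m, t/m} - \mathcal{U}$ so that Young's convolution inequality $\|f * g\|_1 \le \|f\|_1 \|g\|_1$ separates the $m$-fold convolution kernel built from $G(\x, t/m)$ from the shifted initial condition $\phi(\x, \xi - \beta(\x) t)$. Lemma~\ref{lemma: L1}, combined with translation invariance of Lebesgue measure in $\xi$, bounds the initial-condition factor by $\|\phi\|_1 < \infty$. Lemma~\ref{lemma: Convolution} then handles the kernel factor: it delivers the convergence of $G(\x, t/m) *^m \delta(\xi)$ to $\delta(\x, \xi)$ in $L^1(\Omega^n)$ as $m \to \infty$, yielding the needed bound on $\|z_{m, t/m} - \mathcal{U}\|_1$.

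The main obstacle is that the spatially-varying shift $\xi \mapsto \xi - \beta(\x) t$ inside $\phi$ entangles the convolution and shift structures underlying the recurrence \eqref{eq: semidiscrete}. Because $\beta$ depends on $\x$, iterating the one-step update does not factor cleanly into a product of a diffusion semigroup and a shift semigroup, and the shift is effectively evaluated at different intermediate spatial points across the $m$ iterations. Unwinding \eqref{eq: semidiscreteiterate} into the factored form required by Young's inequality, while correctly bookkeeping these intermediate $\beta$-evaluations, is the central technical step. I expect to lean on $\phi \in L^1$ (Lemma~\ref{lemma: L1}), the positivity of $\beta$ assumed in Theorem~\ref{thm: Energy}, and the kernel convergence in Lemma~\ref{lemma: Convolution} as the three essential ingredients to close the argument.
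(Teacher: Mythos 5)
Your blueprint --- a triangle inequality through a common pivot, Young's convolution inequality to separate the $m$-fold kernel from the data, Lemma \ref{lemma: Convolution} for the kernel factor, and $\phi\in L^1$ for the data factor --- is exactly the paper's proof, so the approach is right. But the obstacle you single out as the ``central technical step'' does not actually arise in this lemma, and because you leave it unresolved your write-up stops just short of the estimate that finishes the argument. The lemma takes the closed formula $z_{m,t/m}(\x,\xi)=G(\x,t/m)*^m\phi(\x,\xi-\beta(\x)t)$ as its hypothesis: the spatially-varying shift is already frozen at $\beta(\x)t$, is independent of $m$, and lives entirely inside the second factor. So the pivot is not an unspecified limit $\mathcal{U}$ but the shifted datum $\phi(\x,\xi-\beta(\x)t)$ itself, and Young's inequality immediately gives
\[
\left\| z_{m,t/m}-\phi(\cdot,\cdot-\beta(\cdot)t)\right\|_{1}\;\le\;\left\| G(\x,t/m)*^{m}\delta(\xi)-\delta(\x,\xi)\right\|_{1}\,\left\|\phi(\x,\xi-\beta(\x)t)\right\|_{1}\;\longrightarrow\;0
\]
by Lemma \ref{lemma: Convolution}, since the shifted datum has $L^1$ norm at most $\|\phi\|_{1}$; the triangle inequality through this common pivot then yields the Cauchy property, which is precisely how the paper argues. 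The bookkeeping of ``intermediate $\beta$-evaluations'' that worries you is a legitimate question about whether \eqref{eq: semidiscreteiterate} faithfully unwinds the recurrence \eqref{eq: semidiscrete} when $\beta$ depends on $\x$ (the $\xi$-shift and the spatial convolution do not commute there), but that concerns the derivation of \eqref{eq: semidiscretez}, not this lemma, which is a statement about the formula as given. Two minor points: a convergent sequence is automatically Cauchy in any normed space, so completeness of $L^1$ is not needed for your reduction (it is used in Theorem \ref{thm: Existence} for the converse direction); and the positivity of $\beta$ plays no essential role here beyond keeping the shifted datum integrable.
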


\begin{proof}
We want to show
\[
\lim_{p,q\to \infty}\norm[1]{
z_{p,t/p}(\x,\xi)-z_{q,t/q}(\x,\xi) }
= 0.
\]
From Lemma \ref{lemma: Convolution} we know that, for any $m\in \N$,
$\lim_{m\to \infty}\norm[1]{
G(\x,t/m)*^{m}\delta(\xi)-\delta(\x,\xi) }
= 0$.  Then
{\footnotesize
\begin{align*}
\lim_{m\to \infty}\norm[1]{
z_{m,t/m}(&\x,\xi) - \phi(\x,\xi-\beta(\x)t) }
=
\\
&=
\lim_{m\to \infty}\norm[1]{
G(\x,t/m)*^{m}\phi(\x,\xi-\beta(\x)t) -
\delta(\x,\xi)*\phi(\x,\xi-\beta(x)t)}
\\
&\leq
\lim_{m\to \infty}\norm[1]{
G(\x,t/m)*^{m}\delta(\xi)-\delta(\x,\xi)
}\norm[1]{
\phi(\x,\xi-\beta(\x)t) }
= 0.
\end{align*}
}
It follows that
{\footnotesize
\begin{align*}
\lim_{p,q\to \infty}\norm[1]{
&z_{p,t/p}(\x,\xi)-z_{q,t/q}(\x,\xi) }
=
\\
&=
\lim_{p,q\to \infty}
\norm[1]{
z_{p,t/p}(\x,\xi)-\phi(\x,\xi-\beta(\x)t)+
\phi(\x,\xi-\beta(\x)t) -z_{q,t/q}(\x,\xi)
}
\\
&\leq
\lim_{p\to \infty}
\norm[1]{
z_{p,t/p}(\x,\xi)-
\phi(\x,\xi-\beta(\x)t) } 
+
\lim_{q\to \infty}\norm[1]{
z_{q,t/q}(\x,\xi)-
\phi(\x,\xi-\beta(\x)t)}.
\end{align*}
}
Therefore, 
$\lim_{p,q\to \infty}
\norm[1]{ 
z_{p,t/p}(\x,\xi)-
z_{q,t/q}(\x,\xi)}
= 0$.
\end{proof}

\begin{theorem} (Existence)
Suppose $\phi \in L^1(\Omega^n)$.  There exists a solution, $U\in L^1(\Omega^n)$, to the governing PDE:
\begin{equation}
\begin{cases}
&U_t + \beta(\x)U_\xi = D_n \nabla^2 U, \qquad (\x,\xi)\in \Omega^n, t>0 \\
& U(\x,t=0,\xi) = \phi(\x,\xi), \hspace{0.8cm} (\x,\xi)\in \Omega^n, t=0 \\
& \lim_{|\x|\to \infty}U(\x,t,\xi) = 0, \hspace{1cm} (\x,\xi)\in \Omega^n, t>0.
\end{cases}
\end{equation}
\label{thm: Existence}
\end{theorem}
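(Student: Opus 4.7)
The plan is to construct the solution $U$ as the $L^1(\Omega^n)$-limit of the semi-discrete operator-splitting iterates $z_{m,t/m}$ defined in \eqref{eq: semidiscretez}, and then to verify that this limit solves the governing PDE. The three preceding lemmas have done most of the preparatory work: Lemma~\ref{lemma: L1} places each iterate in $L^1(\Omega^n)$, and Lemma~\ref{lemma: Cauchy} shows that $\{z_{m,t/m}\}_{m \in \N}$ is a Cauchy sequence in $L^1(\Omega^n)$, so the bulk of what remains is an application of completeness together with a consistency argument.

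First I would invoke the completeness of $(L^1(\Omega^n), \norm[1]{\cdot})$ to define
\[
U(\x,t,\xi) \equiv \lim_{m\to\infty} z_{m,t/m}(\x,\xi),
\]
with convergence in the $L^1$-norm; the limit automatically inherits $L^1$-integrability. Second, I would verify the auxiliary conditions. For the initial datum, sending $t \to 0^{+}$ in the estimate established inside the proof of Lemma~\ref{lemma: Cauchy} recovers $U(\x,0,\xi) = \phi(\x,\xi)$. For the far-field condition, each iterate is a finite Gaussian convolution of the integrable datum $\phi$ with the heat kernel $G$, which is a positive probability density that smooths and preserves decay at infinity; hence $z_{m,t/m}$ vanishes as $|\x| \to \infty$, and this property passes to the $L^1$-limit in the distributional sense.

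The main obstacle, and the step requiring the most care, is showing that $U$ satisfies $U_t + \beta(\x) U_\xi = D_n \nabla^2 U$ itself. This is a Lie--Trotter splitting consistency argument. Each sub-step is an exact semigroup: the diffusion sub-step propagates by the heat semigroup $e^{\tau D_n \nabla^2}$ via \eqref{eq: SolveGrn}, which solves $\hU_t = D_n \nabla^2 \hU$, and the absorption sub-step propagates by the characteristic shift $\xi \mapsto \xi - \beta(\x)\tau$ via \eqref{eq: SolveAbs}, which solves $\bU_t + \beta(\x)\bU_\xi = 0$. Denoting the one-step splitting operator by $S_\tau$ and expanding to first order in $\tau$ gives $S_\tau = I + \tau\bigl(D_n \nabla^2 - \beta(\x)\partial_\xi\bigr) + O(\tau^2)$, so iterating $m = t/\tau$ times and passing to $\tau \to 0$ formally produces the combined semigroup $e^{t(D_n \nabla^2 - \beta(\x)\partial_\xi)}$ acting on $\phi$.

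To make this rigorous in $L^1$ without invoking abstract operator theory, I would pair $z_{m,t/m}$ with a smooth, compactly supported test function $\psi(\x,\xi)$, Taylor expand $G(\x,\tau)$ in the spatial variable and the characteristic shift in $\xi$, and pass to the $m \to \infty$ limit to obtain the weak form of the PDE against $\psi$. The parabolic smoothing provided by the Gaussian convolutions then supplies the regularity needed to upgrade this weak solution to a classical one for $t > 0$, yielding a function $U \in L^1(\Omega^n)$ that satisfies \eqref{eq: PDE} together with the prescribed initial and far-field conditions.
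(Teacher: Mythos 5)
Your proposal takes essentially the same route as the paper: both construct $U$ as the $L^1(\Omega^n)$-limit of the operator-splitting iterates $z_{m,t/m}$, invoking Lemma~\ref{lemma: L1} and Lemma~\ref{lemma: Cauchy} together with the completeness of $L^1$. The only difference is that you spell out the Lie--Trotter consistency argument (test functions, first-order expansion of the one-step operator) needed to show the limit actually satisfies the PDE, whereas the paper dispatches that step with the bare assertion that since each $z_{m,t/m}$ satisfies the operator-split system the limit does too --- so your extra detail is a strengthening of the paper's argument rather than a departure from it.
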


\begin{proof}
Choose any $(\x,\xi)\in \Omega^n$ and any $t>0$.
Suppose $\phi \in L^1(\Omega^n)$ and 
define $z_{m,\tau}(\x,\xi) = G(\x,\tau)*^m \phi(\x,\xi-\beta(\x)m\tau)$ for any $m\in \N$, $\tau>0$.  
By Lemma \ref{lemma: L1} and Lemma \ref{lemma: Cauchy} we know that $z_{m,t/m}\in L^1(\Omega^n)$ is a Cauchy sequence.  
On account of $L^1$ being complete, there exists a $\mathcal{U}\in L^1(\Omega^n)$ such that 
$\lim_{m\to \infty}\norm[1]{
z_{m,t/m}(\x,\xi) - \mathcal{U}(\x,t,\xi)}=0$.
Since $z_{m,t/m}(\x,\xi)$ satisfies the operator-split PDE for all $m \in \N$, we know that $\mathcal{U}(\x,t,\xi)$ satisfies the operator-split PDE.
Therefore $\mathcal{U}(\x,t,\xi)$ satisfies the time-continuous PDE.
\end{proof}


\subsection{Uniqueness \& continuous dependence on initial data}\label{sec: Uniqueness}
\begin{theorem} (Uniqueness)
The solution to PDE \eqref{eq: PDE} is unique.
\label{thm: Uniqueness}
\end{theorem}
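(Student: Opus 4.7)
The plan is to exploit the linearity of the PDE together with the energy estimate already established in Theorem \ref{thm: Energy}. Suppose $U_1$ and $U_2$ are two solutions to \eqref{eq: PDE} corresponding to the same initial data $\phi$ and the same far-field condition. By linearity, their difference $W \equiv U_1 - U_2$ satisfies
\begin{equation*}
W_t + \beta(\x) W_\xi = D_n \nabla^2 W, \qquad (\x,\xi) \in \Omega^n,\ t > 0,
\end{equation*}
with $W(\x,0,\xi) = 0$ on $\Omega^n$ and $\lim_{|\x|\to\infty} W = 0$. So $W$ itself falls in the hypotheses of the energy theorem.

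Next, I would apply Theorem \ref{thm: Energy} to $W$, taking $E_W(t) = \tfrac{1}{2}\|W\|_2$. The same calculation as in the proof of Theorem \ref{thm: Energy} — integration by parts in $\xi$ using $\beta(\x) > 0$ together with the decay of $W$ as $\xi \to \infty$ and the boundary behavior at $\xi = 0$, combined with the divergence identity in $\x$ using the far-field vanishing — yields $\tfrac{dE_W}{dt} \leq 0$, hence
\begin{equation*}
0 \leq E_W(t) \leq E_W(0).
\end{equation*}
Because $W(\x,0,\xi) \equiv 0$, we have $E_W(0) = 0$, which forces $E_W(t) = 0$ for every $t > 0$. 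Since $E_W(t) = \tfrac{1}{2}\int_{\Omega^n} W^2\,d\xi\,d\x$ is the integral of a non-negative quantity, this gives $W \equiv 0$ almost everywhere on $\Omega^n$ for each $t > 0$, and therefore $U_1 = U_2$.

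The main obstacle I anticipate is regularity/integrability: the existence theorem produces a solution in $L^1(\Omega^n)$, while the energy method needs $W \in L^2(\Omega^n)$ (and enough smoothness to justify the integrations by parts and to make $\int W^2$ finite). I would address this by restricting attention to the class of solutions for which the energy functional is finite — i.e., $L^1 \cap L^2$ solutions with enough regularity that $W$, $\beta W^2$, and $|\nabla W|^2$ are integrable — so that Theorem \ref{thm: Energy} applies verbatim to $W$. The boundary term $\beta(\x) W^2\big|_{\xi=0}^{\infty}$ vanishes exactly as in Theorem \ref{thm: Energy} because $\beta(\x) > 0$ forces $W(\x,t,0) = 0$ for $t > 0$, and the decay at $\xi = \infty$ is assumed. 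Once these mild regularity conditions are imposed, the argument is a direct consequence of the energy inequality.
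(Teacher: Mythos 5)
Your argument is exactly the paper's proof: form $W = U_1 - U_2$, use linearity to see that $W$ solves \eqref{eq: PDE} with zero initial data, invoke the energy inequality of Theorem \ref{thm: Energy} to conclude $E_W(t) = 0$, and deduce $U_1 = U_2$ almost everywhere. Your additional remark about needing $W \in L^2(\Omega^n)$ with enough regularity for the integrations by parts is a fair point that the paper itself leaves implicit, but it does not change the substance of the argument.
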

\begin{proof}
Suppose we have two solutions, $U_1, U_2 \in L^1(\Omega^n)$ to the PDE \eqref{eq: PDE}.  
We will define $W = U_2 - U_1$.  Given \eqref{eq: PDE} is linear, we know $W(\x,t,\xi)$ solves the PDE:
\begin{equation}
\begin{cases}
& W_t + \beta(\x)W_\xi = D_n \nabla^2 W, \qquad (\x,\xi)\in \Omega^n, t>0 \\
& W = 0, \hspace{4.0cm} (\x,\xi)\in \Omega^n, t=0 \\
& \lim_{|\x|\to\infty}W =0, \hspace{2.4cm} (\x,\xi)\in \Omega^n, t>0.
\end{cases}
\end{equation}
From the energy argument in Theorem \ref{thm: Energy}, we know that 
\[
0 \leq E_w (t) \leq E_w(0).
\]
Seeing that $E_w(0) = \frac{1}{2}\int_{\Omega^n}
W(\x,0,\xi)^2 \,d\xi\,d\x = 0$, 
we know that $E_w(t)=0$ for all $t$.  
By definition of $E_w(t)$, we demonstrated that
\[
0 \leq 
\int_{\Omega^n} \left(U_1(\x,t,\xi)-U_2(\x,t,\xi)\right)^2\,d\xi\,d\x =
E_w(t) = 0.
\]
Therefore, $U_1(\x,t,\xi)=U_2(\x,t,\xi)$ almost everywhere.
\end{proof}

\begin{theorem} (Continuous Dependence on Initial Data)
Consider any $\epsilon > 0$.  Suppose $U_1$ satisfies the PDE
\begin{equation}
\begin{cases}
& U_t + \beta(\x)U_\xi = D_n\nabla^2 U, \qquad (\x,\xi)\in \Omega^n, t>0 \\
& U(\x,t=0,\xi) = \phi_1(\x,\xi), \hspace{0.6cm} (\x,\xi)\in \Omega^n, t=0 \\
& \lim_{|\x|\to\infty}U(\x,t,\xi)=0, \hspace{0.93cm}(\x,\xi)\in \Omega^n, t>0,
\end{cases}
\label{eq: PDE1}
\end{equation}
and $U_2$ satisfies the PDE 
\begin{equation}
\begin{cases}
& U_t + \beta(\x)U_\xi = D_n\nabla^2 U, \qquad (\x,\xi)\in \Omega^n, t>0 \\
& U(\x,t=0,\xi) = \phi_2(\x,\xi), \hspace{0.6cm} (\x,\xi)\in \Omega^n, t=0 \\
& \lim_{|\x|\to\infty}U(\x,t,\xi)=0, \hspace{0.93cm}(\x,\xi)\in \Omega^n, t>0,
\end{cases}
\label{eq: PDE2}
\end{equation}
where $\norm[2]{\phi_1(\x,\xi)-\phi_2(\x,\xi)} < \epsilon$.  
Then $\norm[2]{U_1-U_2} < \epsilon$.
\label{thm: Continuous}
\end{theorem}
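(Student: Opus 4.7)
The plan is to reduce this to the energy estimate of Theorem \ref{thm: Energy} by exploiting linearity, essentially a quantitative refinement of the uniqueness argument in Theorem \ref{thm: Uniqueness}. First, I set $W = U_1 - U_2$. Since the governing PDE is linear and both $U_1$ and $U_2$ satisfy the same equation with the same far-field decay (only the initial data differ), $W$ solves the homogeneous Cauchy problem
\begin{equation*}
\begin{cases}
W_t + \beta(\x) W_\xi = D_n \nabla^2 W, & (\x,\xi)\in \Omega^n,\ t>0, \\
W(\x,0,\xi) = \phi_1(\x,\xi)-\phi_2(\x,\xi), & (\x,\xi)\in \Omega^n, \\
\lim_{|\x|\to \infty} W(\x,t,\xi) = 0, & (\x,\xi)\in \Omega^n,\ t>0.
\end{cases}
\end{equation*}

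Next, I apply Theorem \ref{thm: Energy} directly to $W$ with the associated energy $E_W(t) = \tfrac{1}{2}\norm[2]{W}$. The theorem gives $0 \leq E_W(t) \leq E_W(0)$ for every $t>0$. Unwinding the definition of the notation $\norm[2]{\cdot}$ in the paper, this reads
\begin{equation*}
\tfrac{1}{2}\norm[2]{W(\cdot,t,\cdot)} \;\leq\; \tfrac{1}{2}\norm[2]{W(\cdot,0,\cdot)} \;=\; \tfrac{1}{2}\norm[2]{\phi_1-\phi_2}.
\end{equation*}

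Finally, I use the hypothesis $\norm[2]{\phi_1-\phi_2} < \epsilon$ to conclude $\norm[2]{U_1-U_2} = \norm[2]{W} < \epsilon$, as required. There is essentially no obstacle here: the hypotheses of Theorem \ref{thm: Energy} (positivity of $\beta$, far-field decay of $W$, and vanishing of $W$ at $\xi = 0$ for $t>0$, which follows for $W$ from the same argument used for $U$ since $\beta > 0$ forces $W(\x,t,0) = 0$) are inherited from those on $U_1$ and $U_2$. The only minor subtlety worth flagging in the write-up is making explicit that the paper's $\norm[2]{\cdot}$ is the squared $L^2$ integral rather than the norm itself, so the $\epsilon$ propagates without needing a square root.
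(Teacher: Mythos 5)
Your proposal is correct and follows essentially the same route as the paper: form $W = U_1 - U_2$, observe by linearity that $W$ solves the homogeneous problem with initial data $\phi_1 - \phi_2$, and apply the energy estimate of Theorem \ref{thm: Energy} to conclude $\norm[2]{W(t)} \leq \norm[2]{\phi_1-\phi_2} < \epsilon$. The remarks you flag (inheritance of the hypotheses of Theorem \ref{thm: Energy} by $W$, and the paper's convention that $\norm[2]{\cdot}$ denotes the squared $L^2$ integral) are consistent with how the paper itself carries out the argument.
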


\begin{proof}
We will define $W = U_1-U_2$.  As \eqref{eq: PDE1} and \eqref{eq: PDE2} are both linear, $W$ solves the PDE
\begin{equation}
\begin{cases}
& W_t + \beta(\x)W_\xi = D_n \nabla^2 W, \qquad (\x,\xi)\in \Omega^n, t>0 \\
& W = \phi_1(\x,\xi)-\phi_2(\x,\xi), \hspace{0.95cm}(\x,\xi)\in \Omega^n, t=0 \\
& \lim_{|\x|\to\infty}W =0, \hspace{2.4cm}(\x,\xi)\in \Omega^n, t>0.
\end{cases}
\label{eq: PDE3}
\end{equation}
Let us define the energy of \eqref{eq: PDE3} as
\[
E_w(t) = \frac{1}{2}\int_{\Omega^n} W^2\,d\xi\,d\x 
= 
\frac{1}{2}\norm[2]{ W(t) }.
\]
By the same argument in the proof of Theorem \ref{thm: Energy}, we have that 
$0 \leq \norm[2]{W(\x,t,\xi) }
\leq \norm[2]{ W(\x,0,\xi) }$.
Since 
$\norm[2]{ W(\x,0,\xi) } =
\norm[2]{ \phi_1(\x,\xi)-\phi_2(\x,\xi) } 
< \epsilon$ 
and
$\norm[2]{ W(\x,t,\xi) } =
\norm[2]{ U_1(\x,,t,\xi)-U_2(\x,t,\xi) }$,
we have that
\[
0\leq 
\norm[2]{ U_1(\x,t,\xi)-U_2(\x,t,\xi) }
\leq
\norm[2]{ \phi_1(\x,\xi)-\phi_2(\x,\xi) }
< \epsilon.
\]
\end{proof}
\cmy{
From the energy argument in Theorem \ref{thm: Energy}, we know that the PDE for V also satisfies the properties of uniqueness and continuous dependence on initial data.  Thus, the model for V is also well-posed.
}

\section{Numerical approximation}\label{sec:numerics}

\subsection{Fully discrete derivation}
We are primarily interested in calculating 
\cmy{the spatially-variable, total density of agents in the live state, $p$.  
While demonstrating that the PDE models for $U$ and $V$ are both well-posed, we explained how the only difference in approximate solutions was multiplying by an indicator function $\mathbf{1}_{[0,\xi_c)}(\xi)$.  
This leads to the fact that the value of $p(\x,t)$ using either method will be identical.  
So for our numerical computation, we will compute the spatially-variable, total density of agents in the initial live state with} 
$p(\x,t) = \int_0^{\xi_c}U(\x,t,\xi)\,d\xi$.  
We will derive this numerical approximation within the spatial domain in 1-D, but the method can easily extend to higher dimensions.
Considering that we will first solve \eqref{eq: PDE} for $U$, we will discretize the region $\Omega^1$ using cell volumes (as opposed to discrete nodes).  
We divide the spatial component into $N$ bins\footnote{
The analytic solution requires the spatial domain to be $\R$.  However, numerically, we need to choose a finite domain.  We choose $N$ such that $G(N\,dx/2, \tau)$ is bounded close to 0.  Similarly, the absorption domain is $[0,\infty)$.  Our solution of interest is within the domain $[0,\xi_c)$ and hence, we choose $K$ such that $K\,d\xi$ is larger than $\xi_c$.}
of width $dx$ and the absorption component into $K$ bins of width $d\xi$;  
the cell volumes have area $dx d\xi$.

These cell volumes will be defined as 
$\omega_{i,k}=B(x_i,dx/2)\times [\xi_k,\xi_{k+1}]$, where $dx$ is the spatial discretization step-size and $B(x_i,dx/2) = \{y\in \R : |x_i-y|<dx/2\}$.  For the following derivations, the spatial location will be indexed by $i$ and the cumulative absorption amount will be indexed by $k$.
We then define $u_{i,k}^m \approx U^m(x_i,\xi_k)$ as
\begin{equation}
u_{i,k}^m = \frac{1}{dx\,d\xi}\int_{\omega_{i,k}} U^m(y,z)\,dy\,dz,
\end{equation}
the average value of $U^m$ in the cell volume $\omega_{i,k}$. 
Note that the continuous and semi-discrete solution is capitalized, $U(x,t,\xi)$ or $U^m(x,\xi)$, whereas the fully discrete solution is in lower-case, $u_{i,k}^m$.

We know the semi-discrete recurrence relation
$U^{m+1}(x,\xi) = G(x,\tau)*U^m(x,\xi-\beta(x)\tau)$ from equation \eqref{eq: semidiscrete}.  
This solution is fully discretized by integrating over the cell volume $\omega_{i,k}$.  By recalling that $u_{i,k}^m$ is piece-wise continuous over $\omega_{i,k}$, we can solve the convolution exactly with the approximated solution:

{\footnotesize
\begin{align*}
\int_{\omega_{i,k}}G(x,\tau)&*U^m(x,\xi-\beta(x)\tau)\,d\xi\,dx
= \int_{\omega_{i,k}} \int_\R G(y,\tau)U^m(x-y,\xi-\beta(x)\tau)\,dy\,d\xi \,dx
\\
&= \sum_{j\in \Z} \int_{B(x_j,dx/2)}\int_{B(x_i,dx/2)}\int_{\xi_k}^{\xi_{k+1}}G(y,\tau)U^m(x-y,\xi-\beta(x)\tau)\,d\xi\,dx\,dy 
\\
&= \sum_{j\in \Z} \int_{B(x_j,dx/2)} \left[
G(y,\tau) \int_{B(x_i,dx/2)}\int_{\xi_k}^{\xi_{k+1}}U^m(x-y,\xi-\beta(x)\tau)\,d\xi\,dx \right]\,dy
\\
&= \sum_{j\in \Z} dx\,d\xi\,u_{i-j,k}^m\int_{B(x_j,dx/2)}  G(y,\tau)\,dy.
\end{align*}
}
\noindent Since $u_{i,k}^{m+1} = \frac{1}{dx\,d\xi}\int_{\omega_{i,k}} U^m(y,z)\,dy\,dz$, we have
\[
u_{i,d_i}^{m+1} = 
\sum_{j\in \Z} u_{i-j,k}^m\int_{B(x_j,dx/2)}  G(y,\tau)\,dy,
\]
where $d_i = \lfloor k+\beta(x_i)\tau \rfloor$, the new absorption index.
By calculation, we find that 
\begin{equation}
\resizebox{0.9\textwidth}{!}{$
G_j = \int_{B(x_j,dx/2)}  G(y,\tau)\,dy =
\frac{1}{2}\left\{
erf\left(\frac{x_j+dx/2}{\sqrt{4D\tau}}\right)-
erf\left(\frac{x_j-dx/2}{\sqrt{4D\tau}}\right)
\right\}.
$}
\label{eq: GreenDiscrete}
\end{equation}
Our numerical method is then
\begin{equation}
u_{i,d_i}^{m+1} = 
\frac{1}{2}\sum_{j\in \Z}
u_{i-j,k}^m 
\left\{
erf\left(\frac{x_j+dx/2}{\sqrt{4D\tau}}\right)-
erf\left(\frac{x_j-dx/2}{\sqrt{4D\tau}}\right)
\right\}.
\end{equation}

We discretize the density $p(x,t)$ as
\begin{equation}
\resizebox{0.9\textwidth}{!}{$
p_i^m = \frac{1}{dx\,d\xi}\int_0^{\xi_c}\int_{B(x_i,dx/2)} U^m(y,z)\,dy\,dz \approx
\frac{1}{dx\,d\xi} \sum_{k\in \mathcal{A}} \int_{\omega_{i,k}}U^m(y,z)\,dy\,dz,\label{eq:pim}$}
\end{equation}
where $\mathcal{A} = \{k:kd\xi < \xi_c\}$.  
Therefore, we can represent $p$ numerically as $p_i^m = \sum_{k\in \mathcal{A}}u_{i,k}^m$, the exact integral using our piece-wise constant approximate solutions.

\subsection{Stability}
We can break down the numerical method into two steps: a diffusive step where we perform the convolution,
\[
v_{i,k}^{m+1} = \frac{1}{2}\sum_{j\in \Z}
u_{i-j,k}^m 
\left\{
erf\left(\frac{x_j+dx/2}{\sqrt{4D\tau}}\right)-
erf\left(\frac{x_j-dx/2}{\sqrt{4D\tau}}\right)
\right\}
\] 
and an absorption step $u_{i,d_i}^{m+1} = v_{i,k}^{m+1}$ where we change the indexing.
Further, we define the discrete energy functional as
\[
\mathbf{E}^m = \sum_{k=0}^K \sum_{i=0}^{N-1} \left(u_{i,k}^m\right)^2.
\]
To prove stability, we want to show $\mathbf{E}^{m+1}-\mathbf{E}^m \leq 0$.
Note that due to the indexing change, for any $i$, 
$\sum_{k=0}^K \left(u_{i,k}^{m+1}\right)^2 \leq \sum_{k\cmy{=}0}^K \left(v_{i,k}^{m+1}\right)^2$.

We can rewrite our numerical scheme as a matrix-vector product, $\mathbf{v}_k^{m+1} = \mathbf{u}_k^m * G = A\mathbf{u}_k^m$, where our discrete convolution matrix $A$ and vector indexing of $\mathbf{u}_k^m$ are the following
\begin{equation}
A = \begin{bmatrix}
G_0  & G_{N-1} & \hdots & G_2 & G_1 \\
G_1  & G_0    & G_{N-1} & \hdots & G_2 \\
\vdots & G_1 & G_0 & \ddots & \vdots \\
G_{N-2} &   & \ddots & \ddots & G_{N-1} \\
G_{N-1} & G_{N-2} & \hdots & G_1 & G_0
\end{bmatrix}
\quad
\mathbf{u}_k^{m} = 
\begin{bmatrix}
u_{0,k}^m \\ u_{1,k}^m \\ \vdots \\ u_{N-1,k}^m
\end{bmatrix},
\label{eq: matrixNumerical}
\end{equation}
given our definition of $G_j$, as defined in \eqref{eq: GreenDiscrete}.

The difference between the energy functional at subsequent times, having absorbed $\xi \in [kd\xi, (k+1)d\xi]$ particles, is:
\begin{align*}
E_k^{m+1}-E_k^m
&= \sum_{i=0}^{N-1} \left(u_{i,k}^{m+1}\right)^2 - \sum_{i=0}^{N-1} \left(u_{i,k}^m\right)^2 
\\
&\leq \sum_{i=0}^{N-1} \left(v_{i,k}^{m+1}\right)^2 -  \sum_{i=0}^{N-1} \left(u_{i,k}^m\right)^2
\\
&=  \left(A\mathbf{u}_{k}^m\right)^2 - \left(\mathbf{u}_k^m\right)^T\mathbf{u}_k^m
\\
&= \left(\mathbf{u}_k^m\right)^T\left(A^TA-I\right)\mathbf{u}_k^m,
\end{align*}
where $E_k^m \equiv \sum_{i =0}^{N-1}\left(u_{i,k}^m\right)^2$.

\begin{theorem}
The spectrum of $A^TA-I$ is $\cmy{s} \left( A^TA-I \right)\leq 0$, with the matrix $A$ defined in \eqref{eq: matrixNumerical} and the scalars $G_j$ defined in \eqref{eq: GreenDiscrete}.
\end{theorem}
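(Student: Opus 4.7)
The plan is to exploit the fact that $A$ in \eqref{eq: matrixNumerical} is a circulant matrix: its $(i,j)$ entry equals $G_{(i-j)\bmod N}$. Circulant matrices are simultaneously diagonalized by the discrete Fourier transform, so $A = F^{*} \Lambda F$, where $F$ is the unitary $N \times N$ DFT matrix and $\Lambda = \mathrm{diag}(\lambda_0, \ldots, \lambda_{N-1})$ with $\lambda_k = \sum_{j=0}^{N-1} G_j \, e^{-2\pi i j k/N}$. Because the transpose of a circulant is again circulant (in fact $A^{T}$ is generated by $(G_0, G_{N-1}, G_{N-2}, \ldots, G_1)$, whose DFT symbol is $\overline{\lambda_k}$), $A^{T}A$ is diagonalized in the same DFT basis with eigenvalues $|\lambda_k|^2$. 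The spectrum of $A^{T}A - I$ therefore consists of the numbers $|\lambda_k|^2 - 1$ for $k = 0, \ldots, N-1$, and the theorem reduces to showing $|\lambda_k| \leq 1$ for every $k$.

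The bound on $|\lambda_k|$ will follow from two elementary facts about the weights $G_j$. First, each $G_j$ is the integral of the strictly positive heat kernel $G(y,\tau) = (4\pi D \tau)^{-1/2} \exp(-y^2/(4D\tau))$ over the interval $B(x_j, dx/2)$ appearing in \eqref{eq: GreenDiscrete}, so $G_j \geq 0$. Second, since these $N$ intervals are pairwise disjoint subsets of $\R$,
\[
\sum_{j=0}^{N-1} G_j \;\leq\; \int_{\R} G(y,\tau)\,dy \;=\; 1.
\]
Applying the triangle inequality to the sum defining $\lambda_k$ then gives $|\lambda_k| \leq \sum_{j=0}^{N-1} G_j \bigl| e^{-2\pi i j k/N} \bigr| = \sum_{j=0}^{N-1} G_j \leq 1$, so $|\lambda_k|^2 - 1 \leq 0$ for every $k$, which is the desired conclusion.

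There is no deep technical obstacle: the entire argument is powered by the circulant structure, after which the claim is essentially the triangle inequality together with the observation that the discretized Green's function weights form a sub-probability distribution on the grid. The one step that deserves explicit verification is that $A^{T}$ is a circulant with symbol $\overline{\lambda_k}$, which is what allows the clean identification of the eigenvalues of $A^{T}A$ as $|\lambda_k|^2$ rather than merely bounding them via a singular-value inequality. Once that is noted, the proof is immediate.
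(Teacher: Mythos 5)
Your proof is correct and follows essentially the same route as the paper: both exploit the circulant structure of $A$ to write the eigenvalues as $\lambda_k=\sum_j G_j\gamma_k^{\,\cdot}$, apply the triangle inequality using $G_j\geq 0$, and bound $\sum_j G_j$ by $1$ using the fact that the $G_j$ are integrals of the unit-mass heat kernel over non-overlapping intervals (the paper phrases this as a telescoping sum of $erf$ values, you phrase it as disjointness of the $B(x_j,dx/2)$, but it is the same observation). Your explicit verification that $A^T$ is the circulant with conjugate symbol, so that $A^TA$ has eigenvalues $|\lambda_k|^2$, is a point the paper asserts without justification, so no changes are needed.
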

\begin{proof}
The discrete convolution matrix $A$ is a circulant matrix, so it has eigenvalues
\[
\lambda_j = G_0 + G_{N-1}\gamma_j + G_{N-2}\gamma_j^2 + \hdots + G_1\gamma_j^{N-1} = 
\sum_{\ell=0}^{N-1} G_\ell \gamma_j^{N-\ell},
\]
for $j=0,1,\ldots, N_1$, where $\gamma_j = \exp\left\{\frac{2\pi j}{N}\sqrt{-1}\right\}$ (the $N$-th root of unity).  It follows that the amplitude of the $j$-th eigenvalue is
\begin{equation}
|\lambda_j| 
= \left|\sum_{\ell=0}^{N-1} G_\ell \gamma_j^{N-\ell}\right|
\leq \sum_{\ell=0}^{N-1} \left|G_\ell\right| \left|\gamma_j^{N-\ell}\right|
= \sum_{\ell=0}^{N-1} \left|G_\ell \right|.
\end{equation}
Given $G_\ell \geq 0$ for all $\ell$, we have that $|\lambda_j| \leq \sum_{\ell=0}^{N-1} G_\ell$ for all $j$.  
Since 
\begin{align*}
G_\ell &= \frac{1}{2}\left\{
erf\left( \frac{x_\ell+dx/2}{\sqrt{4D\tau}} \right) -
erf\left( \frac{x_\ell-dx/2}{\sqrt{4D\tau}} \right) \right\}
\\
&= \frac{1}{2}\left\{
erf\left( \frac{x_\ell+dx/2}{\sqrt{4D\tau}} \right) -
erf\left( \frac{x_{\ell-1}+dx/2}{\sqrt{4D\tau}} \right) \right\},
\end{align*}
we have that 
\begin{equation}
|\lambda_j| \leq
\frac{1}{2}\left\{
erf\left( \frac{x_{N-1}+dx/2}{\sqrt{4D\tau}} \right) -
erf\left( \frac{x_0-dx/2}{\sqrt{4D\tau} }\right) \right\}
< 1.
\end{equation}
The strict inequality is due to $-1\leq erf(x)\leq 1$ for all $x$ and $N$ being finite.  
It follows that the eigenvalues of $A^TA$ are $|\lambda_j|^2<1$.  Therefore, the spectrum of $A^TA-I$ is $\cmy{s}(A^TA-I)<0$.
\end{proof}

Therefore, $E_k^{m+1}-E_k^m \leq 0$.  
Consequently, 
\[
\mathbf{E}^{m+1}-\mathbf{E}^m = \sum_{k=0}^K \left\{ E_k^{m+1}-E_k^m\right\}\leq 0, 
\]
which proves that
the numerical method is stable.

\section{Numerical results}\label{sec:results}
\subsection{The 1-dimensional model}
For our 1-dimensional simulations, we perform 100,000 realizations of the ABM with agent $\mathfrak{u}$ initialized at $x_0 = 0.5$.  The agent moves with spatial step size of $\dx=0.01$ and time step $\dt = \dx^2/2$.  
For the corresponding PDE model, we use the stable numerical algorithm detailed in Section \ref{sec:numerics} with a point source at $x_0=0.5$. 
We choose $N$ so that $G_0, G_{N-1} < \varepsilon_{mach}$ and we assign the numerical step sizes as $dx=\dx$, $dt=\dt$, and $d\xi = \xi_c/2000$.  In both the ABM and PDE model,
we define the agent absorption function as $\beta(x) = \alpha \int_{B(x,\dx/2)} C(x)\,dx$.  
The $\alpha$ parameter defines the permeability of the agent's membrane and for the following examples, we let $\alpha = 0.1$.

\subsubsection{Example 1: Max concentration at starting location}
For this example, the chemical concentration $C(x)=\frac{1}{1+10(x-0.5)^2}$ is symmetric and concave down around $x=0.5$.  A comparison of the ABM and our continuum PDE model is shown in Fig.~\ref{fig: RatlSurf}, for a critical or tolerance threshold of $\xi_c = 10\dx\dt$. The distribution of cells or agents in the initial live state are shown in color with time on the vertical axis and spatial location on the horizontal axis. The values on Fig.~\ref{fig: PDERatlSurf} at location $(x_i,t_m)$ are the numerical solutions $p_i^m$ from \eqref{eq:pim}, which are interpreted as the probability a cell is alive and located within region $B(x_i,\dx/2)$ at time $t$. Since the agents are all initialized at $x_o=0.5$, we observe a high density of cells close to this point for small time intervals. We note that Fig.~\ref{fig: PDERatlSurf} is smoother than \ref{fig: ABMRatlSurf} since it is a continuous approximation whereas the ABM has agents moving discretely either to the left or right at each time step.

\begin{figure}[H]
     \centering
     \subfloat[][ABM Simulation]{
     \includegraphics[width=0.4\textwidth]{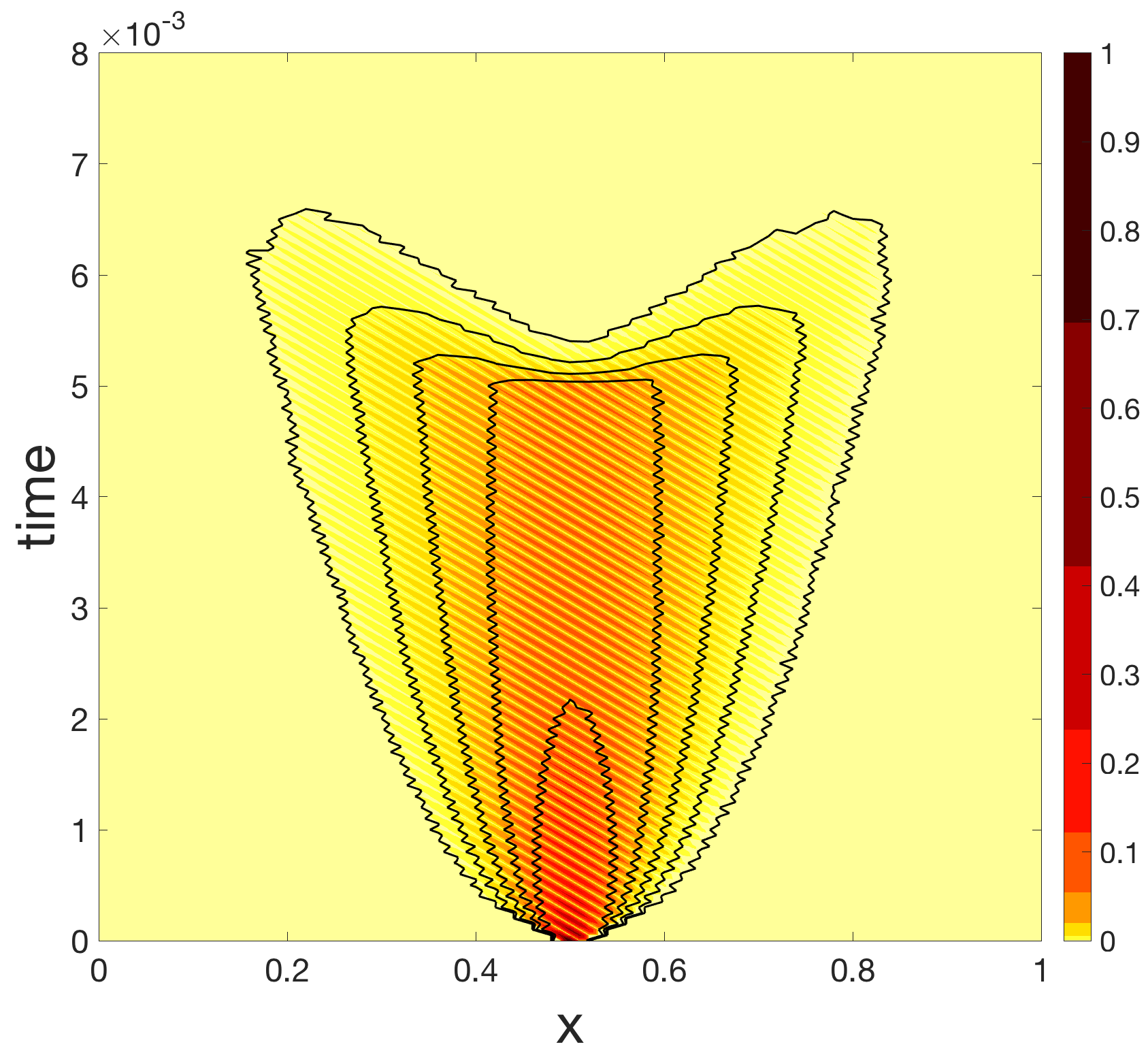}\label{fig: ABMRatlSurf}}
     \subfloat[][PDE Approximation]{
     \includegraphics[width=0.4\textwidth]{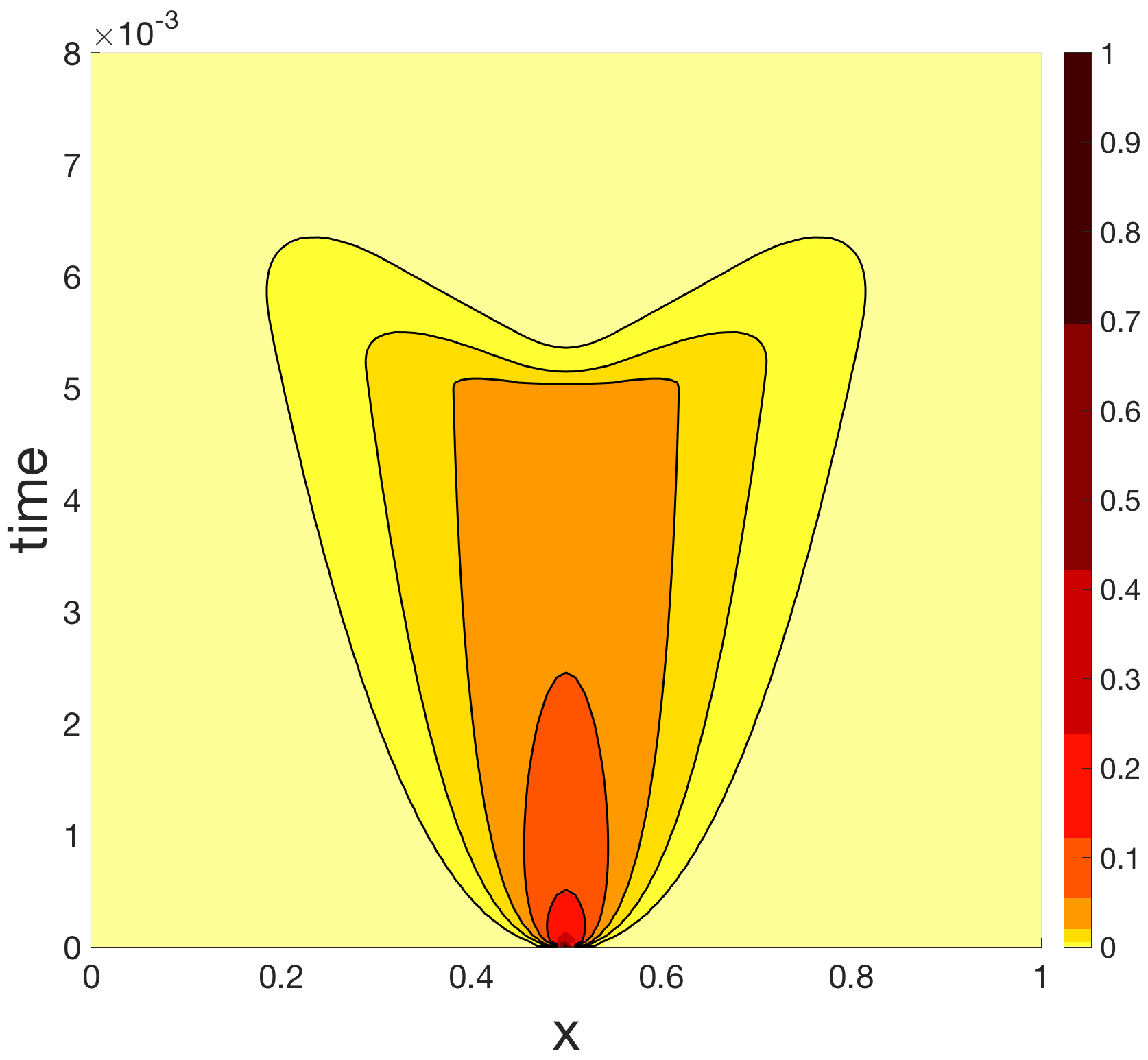}\label{fig: PDERatlSurf}}
     \caption{Comparison of the probability distribution of live agents (shown in color) at locations $x\in[0,1]$ and at time points $t\in[0,0.008]$. The ABM results are the mean over 100,000 simulations. \cso{For both, the chemical concentration is $C(x)=1/(1+10(x-0.5)^2)$.}}
     \label{fig: RatlSurf}
\end{figure}

Additionally, since $C(x)$ has a max at $x=0.5$, this causes the probability distribution $p(x,t)$ to become bimodal at approximately $t=0.0055$. Those cells that have remained close to the initial starting location have absorbed more particles than those that have moved left or right. Hence, cells close to $x=0.5$ are moving out of the initial live cell state when they reach their absorption capacitance $\xi_c$.

\begin{figure}[H]
     \centering
     \subfloat[][Survival Probability]{
     \includegraphics[width=0.30\textwidth,height=1.5in]{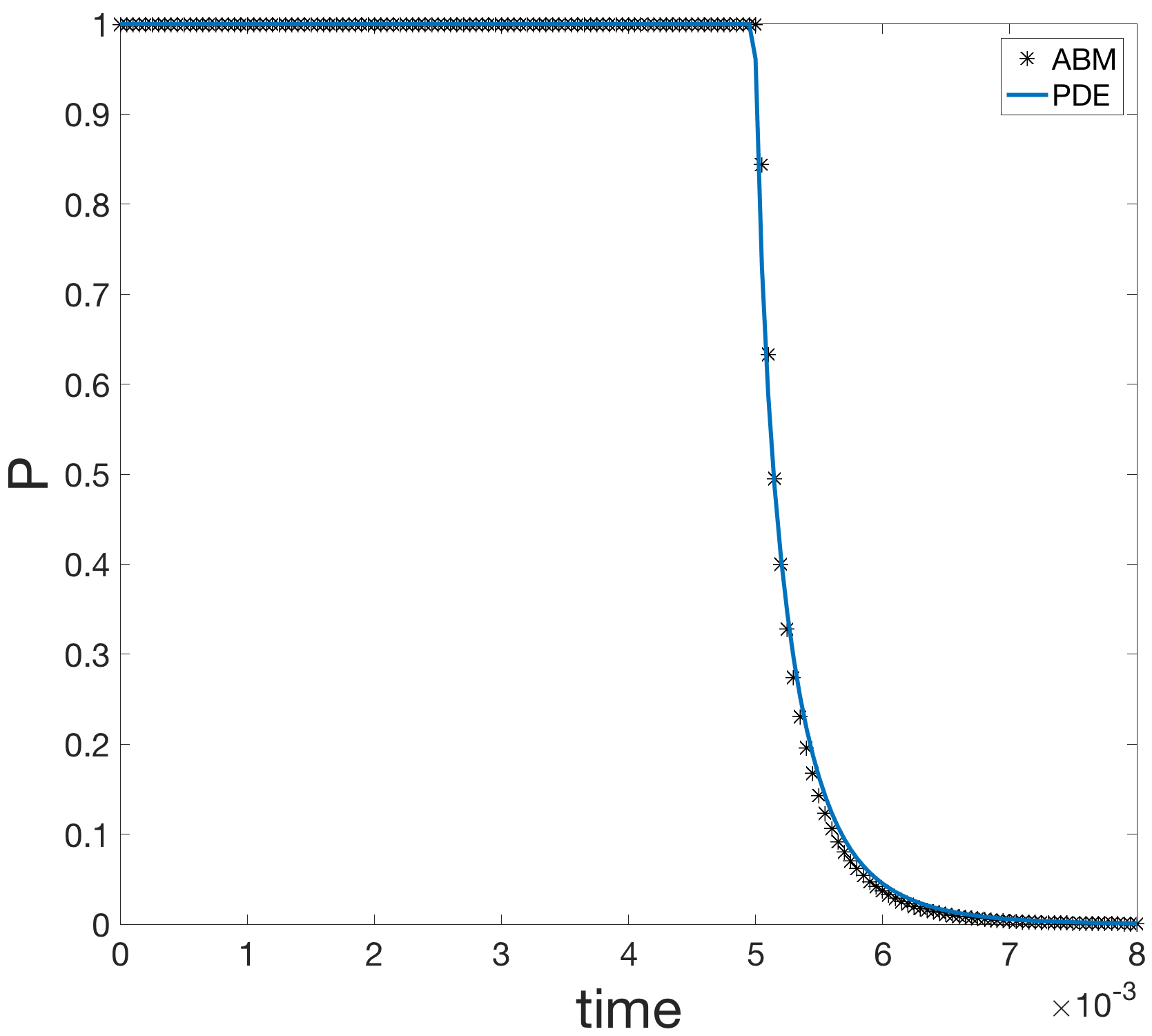}\label{fig: RatlSurvive}}
     \subfloat[][Mean Location]{
     \includegraphics[width=0.30\textwidth,height=1.55in]{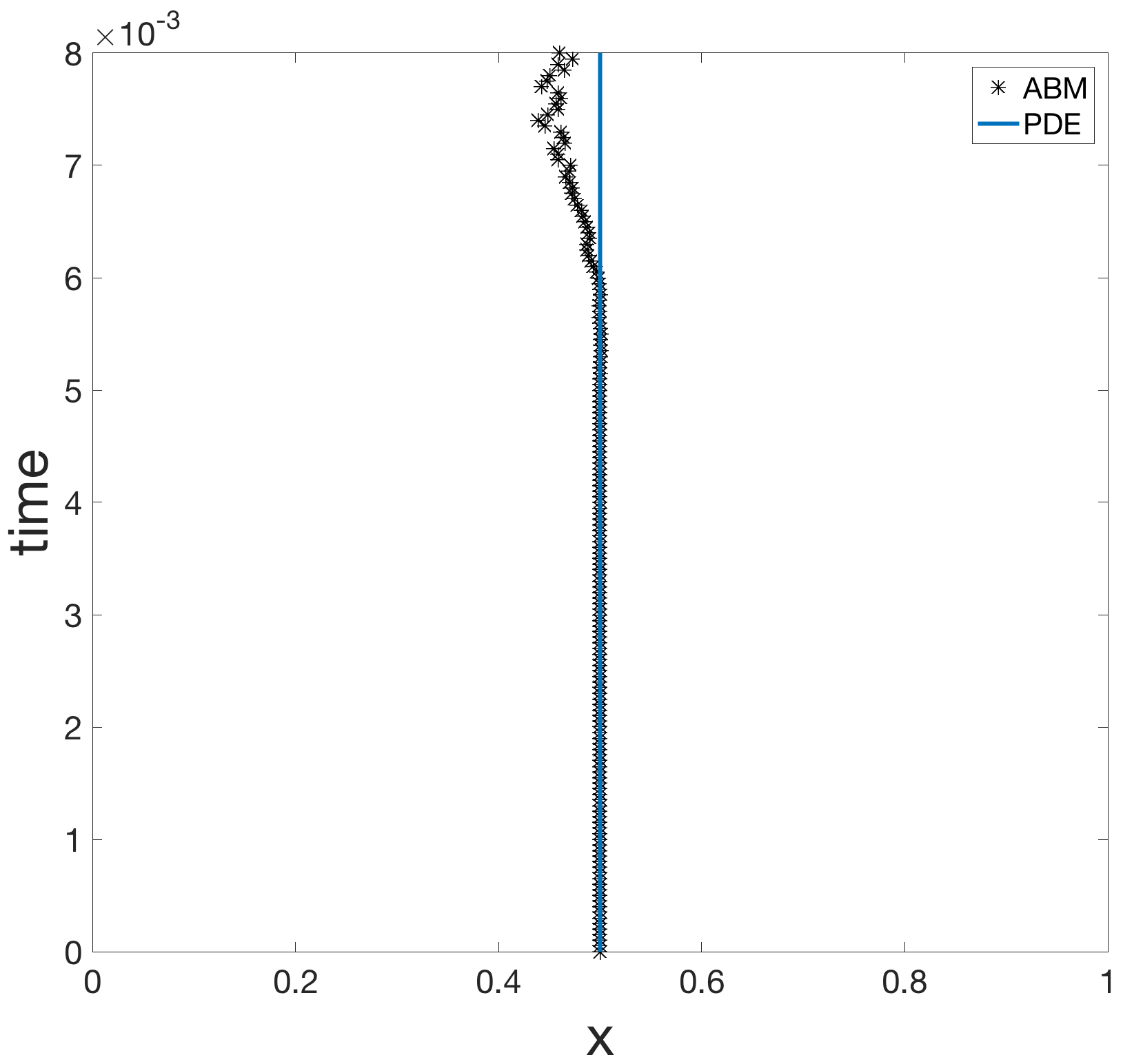}\label{fig: RatlMean}}
     \subfloat[][Standard Deviation]{
     \includegraphics[width=0.30\textwidth,height=1.5in]{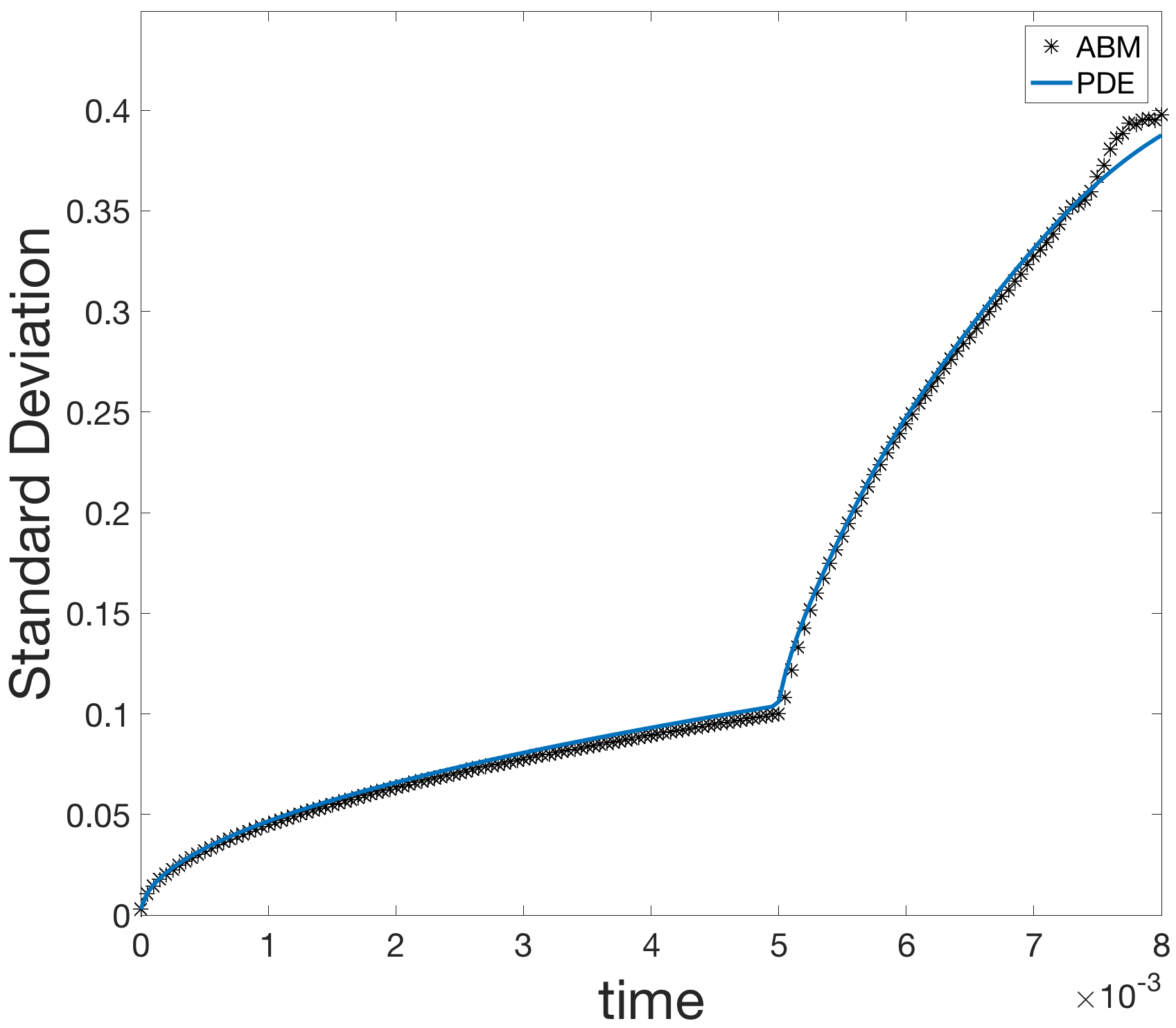}\label{fig: RatlStd}}
     \caption{Comparison of the survival probability, as well as mean and standard deviation of the live agent locations, for the ABM (black $*$) and the numerical PDE  solution (blue line) at each time-step \cso{for $C(x)=1/(1+10(x-0.5)^2)$}.}
     \label{fig: RatlData}
\end{figure}

The probability an agent is alive at a given time $t$ is the survival probability $P(t)$, calculated as 
\begin{equation}
P(t)=\int_\R p(x,t)\,dx.\label{eq:survivalP}
\end{equation} 
In Fig.~\ref{fig: RatlSurvive}, we observe that $P(t)$ for the ABM simulation and PDE approximations match; there is a sharp decrease in survival probability after $t=0.005$ and the majority of the cells have died at $t=0.007$.

The mean location of the live agents is calculated as $\mu(t) = \int_\R x\hat{p}(x,t)\,dx$, where $\hat{p}(x,t) = p(x,t)/P(t)$ is the normalized value of $p(x,t)$ at each time $t$.  
The numerical PDE solution solves for the average value in the interval centered at $x_i$ with radius $\dx/2$, $B(x_i,\dx/2)$.  
This allows the calculation of $\mu(t)$, the mean at time $t = m\tau$, as
\begin{equation}
\mu(t) = \int_\R x\hat{p}(x,t)\,dx = 
\frac{1}{P(t)}\sum_{i=1}^{N-1} p_i^m \int_{B(x_i,\dx/2)}x\,dx,\label{eq:mean}
\end{equation}
the exact integral of the approximate piece-wise constant solution.  Just as we did when calculating the convolution, we can take $p_i^m$ out of the integral since it is piece-wise constant.  
In a similar way, we can calculate $\sigma^2(t)$, the variance at time $t=m\tau$, as
\begin{equation}
\resizebox{0.9\textwidth}{!}{$
\sigma^2(t) = \int_\R (x-\mu(t))^2\hat{p}(x,t)\,dx =
\left\{ \frac{1}{P(t)} \sum_{i=1}^{N-1} p_i^m \int_{B(x_i,\dx/2)}x^2\,dx \right\} -
\mu(t)^2$.}
\end{equation}

The mean location of the ABM simulation and PDE approximation is shown in Fig.~\ref{fig: RatlMean}.  
The chemical concentration $C(x)$ is symmetric around $x=0.5$, the location where the agents are initialized, and there is no bias in movement ($\ell(x)=r(x)=0.5$). Hence, we would expect the mean location of agents in the initial state to be centered at $x=0.5$.  
We see that until approximately $t=0.006$, the PDE mean and the ABM mean are close to $x=0.5$.  
For times $t>0.006$, the number of agents in the ABM simulation is relatively small, as shown in Fig.~\ref{fig: RatlSurvive}.  
This accounts for the increasing stochastic noise in the mean, as well as the standard deviation, which is shown in Fig.~\ref{fig: RatlStd}. 

At each iteration of the ABM simulation, the agent can move either left or right.  We see that the agents that remain in the initial state are those that are furthest from $x=0.5$, where $C(x)$ is \cmy{smaller} than at $x=0.5$.  As a result, the standard deviation is a monotonically increasing function, as seen in Fig.~\ref{fig: RatlStd}.  At approximately $t=0.005$, many cells towards the center of the simulation change state, which causes the ``corner'' in the standard deviation graph.

\subsubsection{Example 2: Decreasing concentration}
The chemical concentration is $C(x)=\exp\left(-x^2\right)$, which is monotonically decreasing in the interval $[0,1]$ and all agents or cells are initialized at $x_o=0.5$.  
We expect that the agents which tend to move to the right within this interval have a higher probability of remaining in the initial state.  
As shown in Fig.~\ref{fig: ExpSurf}, the cells that remain in the initial state tend to be further to the right and again, we have excellent qualitative agreement between the ABM and the new PDE continuum model. In Fig.~\ref{fig: ABMExpSurf} we observe a striped pattern, which is a result of the ABM agents moving only left or right at any given iteration.  At a critical threshold of $\xi_c = 10\dx\dt$, cells are able to achieve a cumulative chemical absorption $\xi>\xi_c$, causing the cell to transition states or die. The survival probability shows this trend in Fig.~\ref{fig: ExpSurvive}, where there is a sharp decrease in survival probability at $t=0.055$.
\begin{figure}[h]
     \centering
     \subfloat[][ABM Simulation]{
     \includegraphics[width=0.4\textwidth]{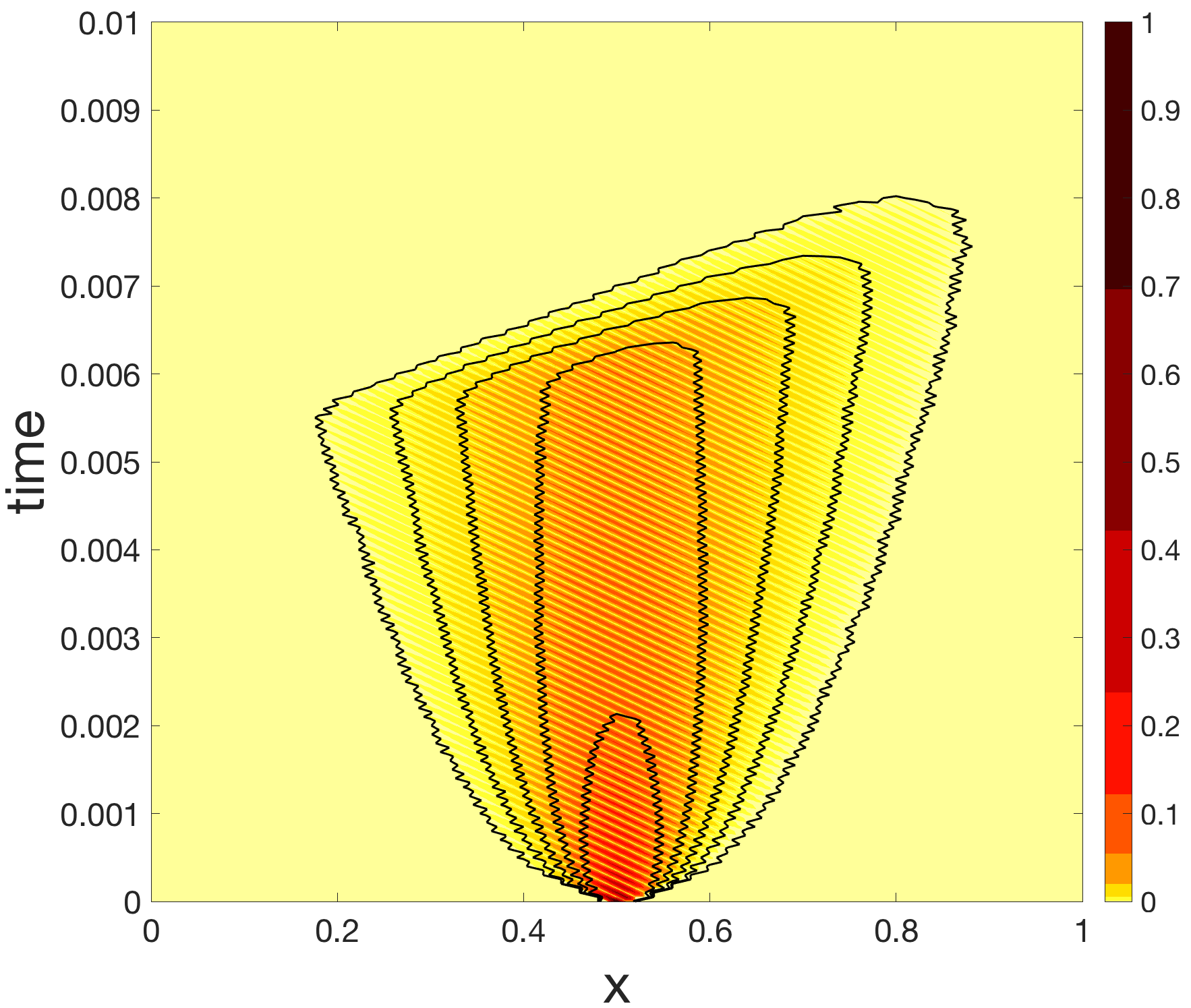}\label{fig: ABMExpSurf}}
     \subfloat[][PDE Approximation]{
     \includegraphics[width=0.4\textwidth]{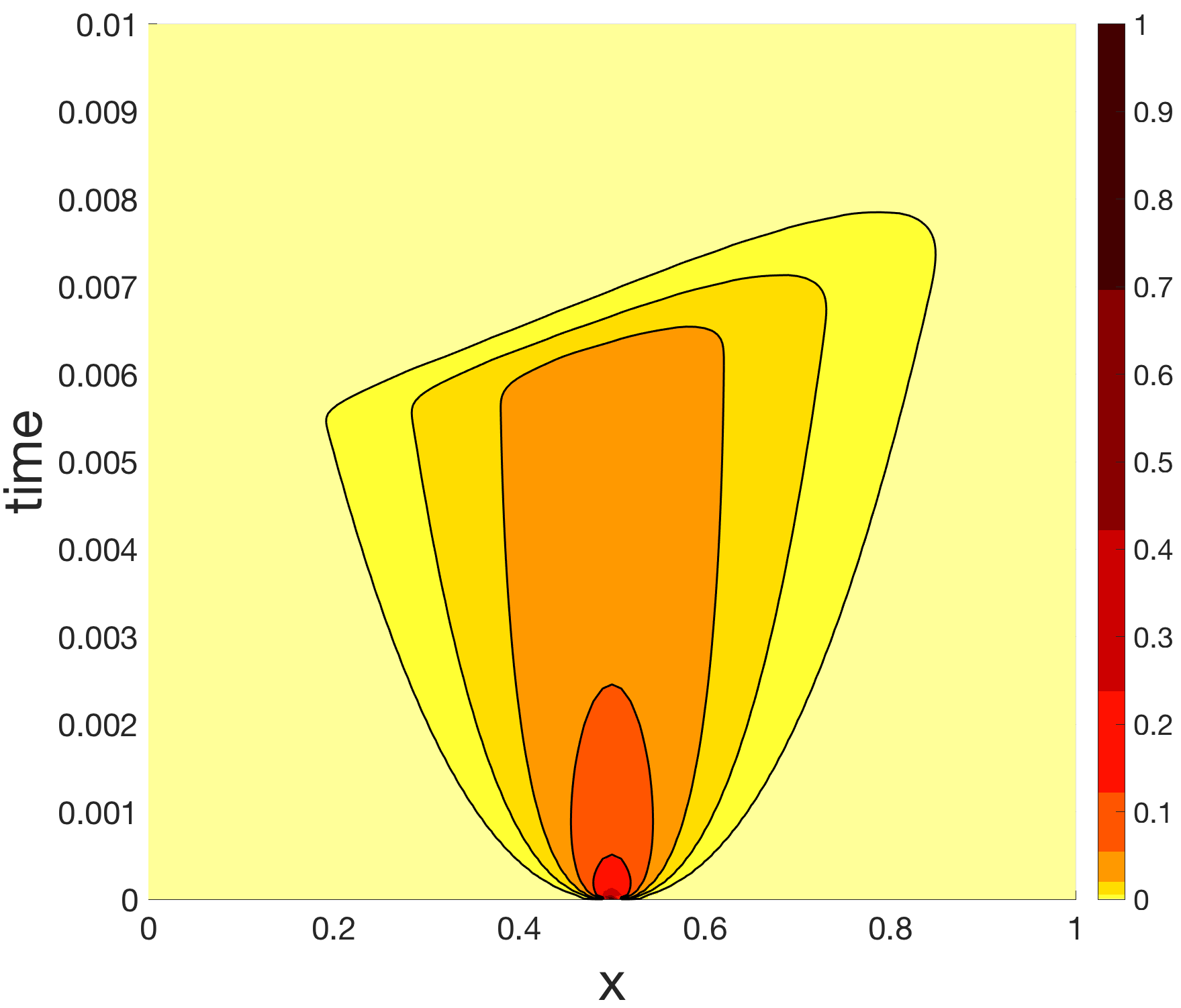}\label{fig: PDEExpSurf}}
     \caption{Comparison of the probability distribution of live agents (shown in color) at locations $x\in[0,1]$ and at time points $t\in[0,0.1]$. The ABM results are a mean of 100,000 simulations. \cso{For both, the chemical concentration is $C(x)=\exp(-x^2)$.}}
     \label{fig: ExpSurf}
\end{figure}

To further characterize the agreement between the ABM simulation and our PDE approximation, we again look at the mean and standard deviation of the location of live cells (with cumulative absorption $\xi<\xi_c$). In Fig.~\ref{fig: ExpMean}, we observe that the mean location (calculated using \eqref{eq:mean}) does move to the right of the initial location $x_o=0.5$ due to the decreased concentration $C(x)$ to the right of $x=0.5$ (allowing cells to live in this region for a longer period of time). Again, we see that there is noise in the ABM mean for times $t>0.008$, when there are relatively few agents in the initial state.

\begin{figure}[h]
     \centering
     \subfloat[][Survival Probability]{
     \includegraphics[width=0.3\textwidth,height=1.5in]{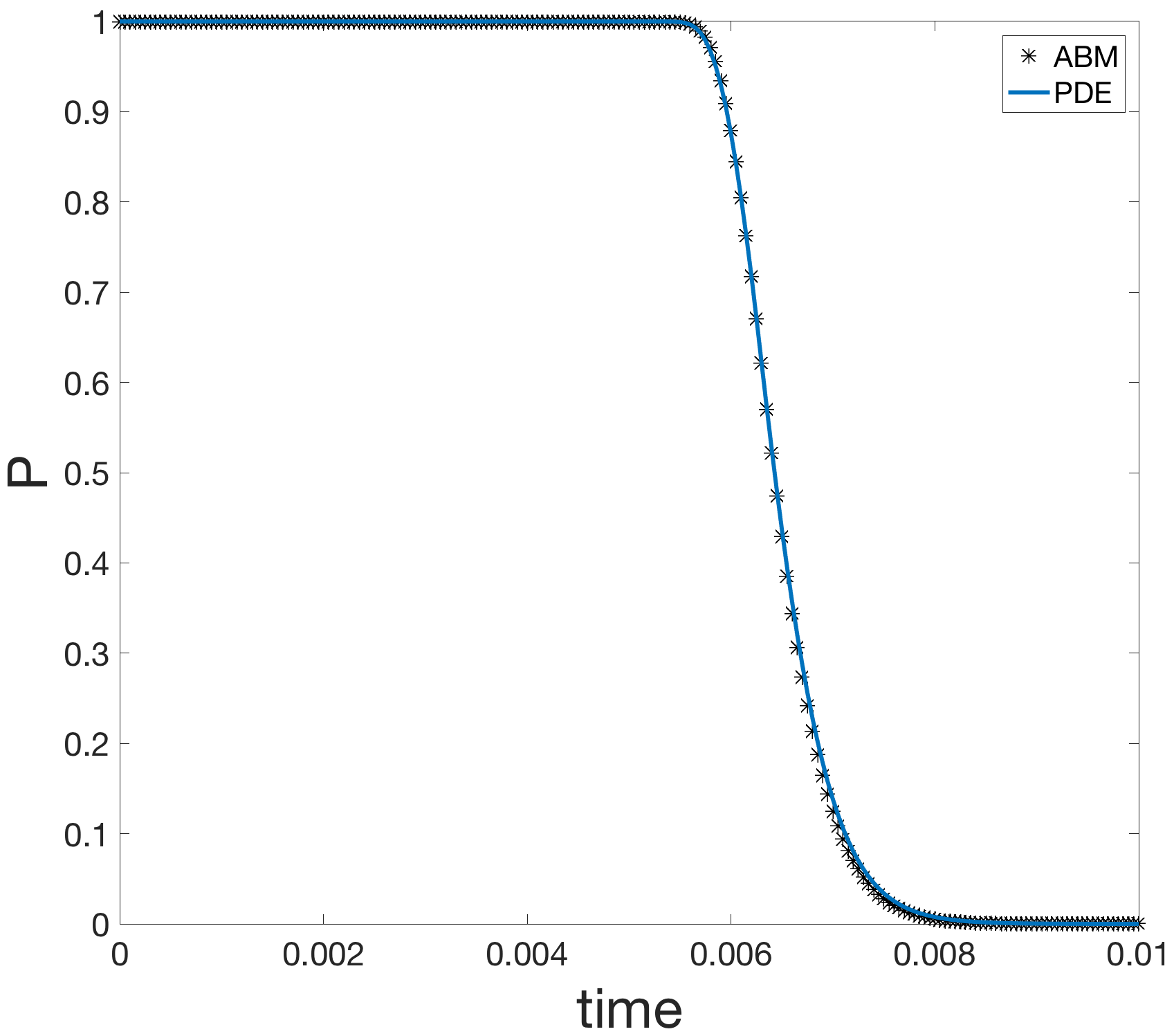}\label{fig: ExpSurvive}}
     \subfloat[][Mean Location]{
     \includegraphics[width=0.3\textwidth,height=1.51in]{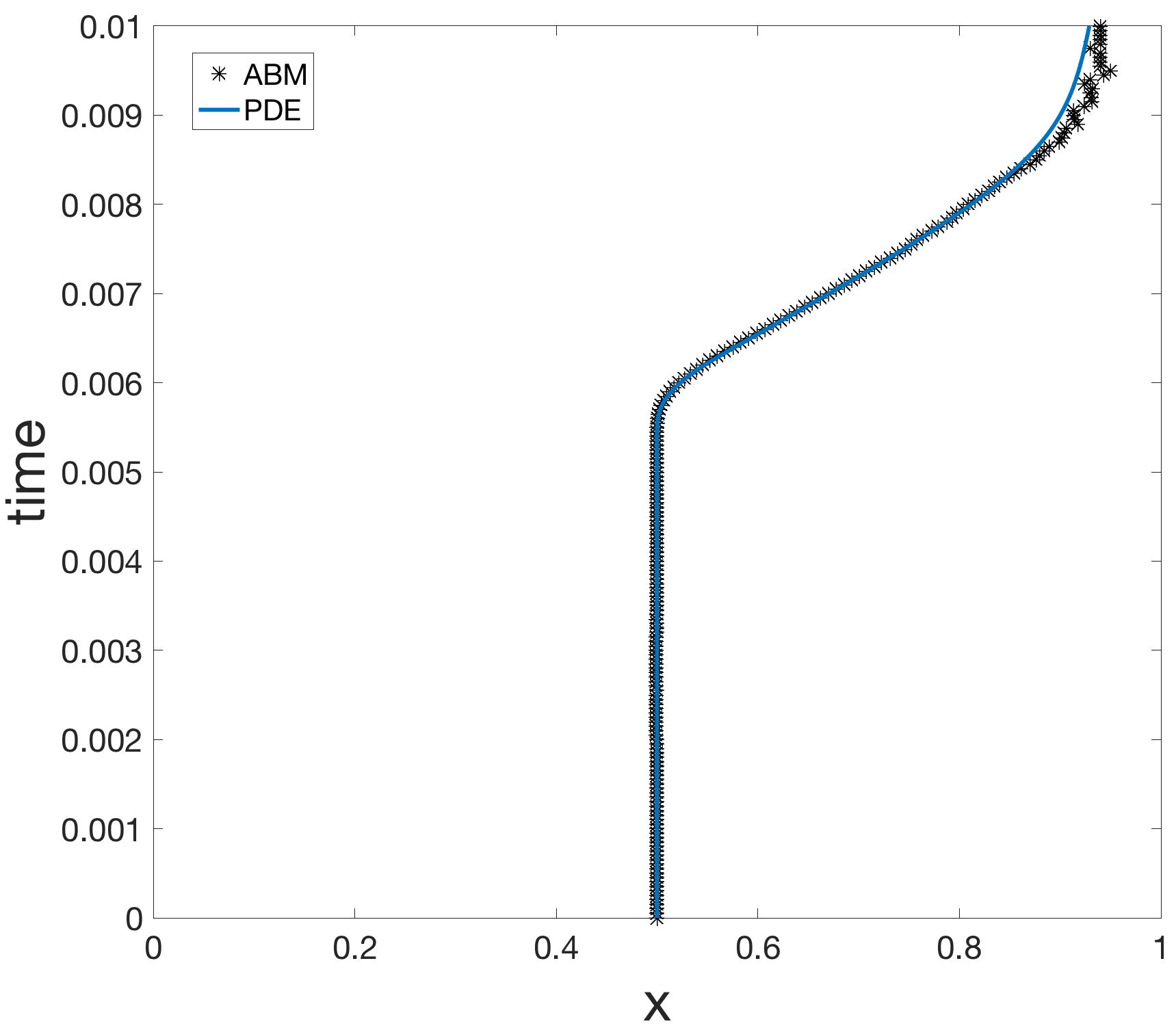}\label{fig: ExpMean}}
     \subfloat[][Standard Deviation]{
     \includegraphics[width=0.3\textwidth,height=1.5in]{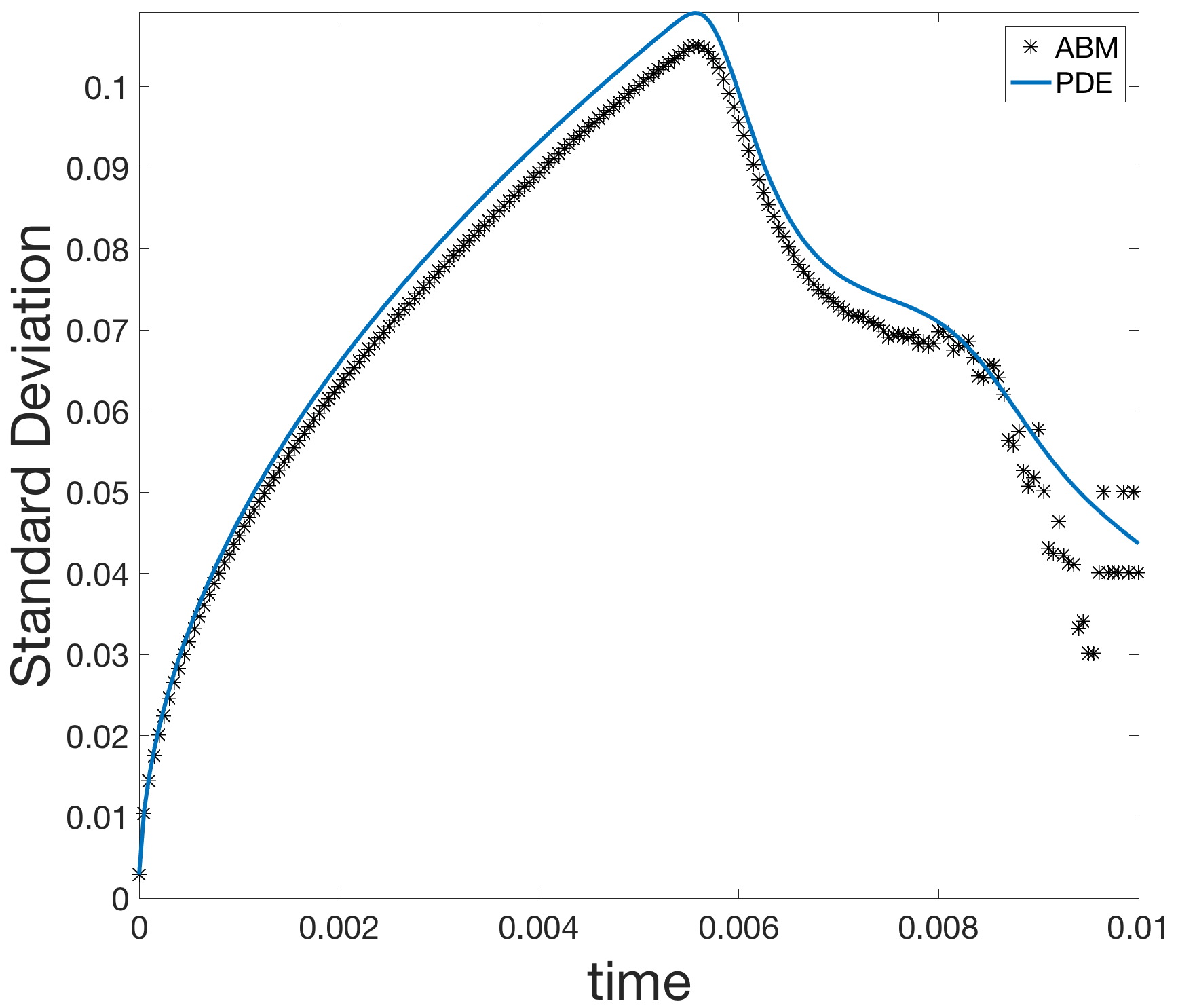}\label{fig: ExpStd}}
     \caption{Comparison of the survival probability, as well as mean and standard deviation of the live agent locations, for the ABM (black $*$) and the numerical PDE  solution (blue line) at each time-step \cso{for $C(x)=\exp(-x^2)$}.}
     \label{fig: ExpData}
\end{figure}

As shown in Fig.~\ref{fig: ExpStd}, the standard deviation of the agents locations is increasing for  $0\leq t \leq 0.005$, which corresponds to the time interval where most cells are alive (see survival probability in Fig.~\ref{fig: ExpSurvive}).  At $t=0.005$, agents with a cumulative absorption reaching $\xi_c$ begin to change state.  Cells to the right of $x_o=0.5$ tend to remain in the initialized state, which moves the mean to the right and reduces the variance.  A majority of the cells have changed state by $t=0.008$, where the cells that remain are those that continued to move right.  Thus, the standard deviation approaches zero.  Similar to Example 1, we see that as the number of agents in the ABM simulation approaches zero, the stochastic noise influences the variance (Fig.~\ref{fig: ExpStd}).

\subsubsection{Example 3: Biased random walk}\label{sec: biased}
\cmy{
Suppose the random walk has a constant bias, where $\ell$ and $r$ denote the probabilities of moving left or right, respectively.  Our absorption model is the following PDE  
\begin{equation}
\begin{cases}
& U_t + \beta(x)U_\xi = aU_x + DU_{xx}, \hspace{0.8cm} (x,\xi)\in \Omega^1, t>0\\
& U = \phi(x,\xi), \hspace{3.62cm} (x,\xi)\in \Omega^1, t=0 \\
& \lim_{|x|\to \infty}U = 0, \hspace{2.2cm}\qquad (x,\xi)\in \Omega^1, t>0,
\end{cases}
\label{eq: PDEBiased}
\end{equation}
where $a = \dx (\ell-r)/\dt$ and $D=\dx^2 (\ell + r)/(2\dt)$.
Note that the existence proof also holds if the agent moves with a constant bias.  After splitting the linear operator, the only difference between \eqref{eq: PDEBiased} and our initial cumulative absorption model equation \eqref{eq: PDE1d} is the form of the Green's function \[
G(x,t) = \frac{1}{4\pi D t}\exp\left\{-\frac{(x-at)^2}{4D t}\right\}, \quad a = \frac{\dx (\ell-r)}{\dt}.
\]
Replacing the diffusion Green's function with this advection-diffusion Green's function does not affect the existence proof in Section \ref{sec: Existence}.  Further, by integration by parts and using our far-field boundary condition, we can show that $\int_\Omega a UU_x\,d\xi\,dx = 0$.  Therefore, the biased model \eqref{eq: PDEBiased} satisfies Theorem 1, so we can prove that the PDE \eqref{eq: PDEBiased} is well-posed.

We set the chemical concentration as $C(x)=1/(1+10(x-0.5)^2)$ and the absorption capacitance as $\xi_c = 10\dx\dt$, the same as in Example 1.  However, in contrast to Example 1, we set the probability an agent moves left as $\ell=0.4$ and the probability an agent moves right as $r=0.6$.  We note that a larger density of agents tend to move to the right.  In fact, the graph in Fig.~\ref{fig: BiasMean} initially moves to the right in a straight line at a rate of $0.2\dx$.  This is due to the fact that only the biased movement determines the agent locations.  The graph of the mean location makes a sudden change around $t=0.005$, which is when some agents absorb above the threshold $\xi_c$ and begin leaving the live state.  At that time, the chemical profile begins to influence the mean location of agents, which also explains the shapes of the distributions in Fig.~\ref{fig: BiasSurf}.
\begin{figure}[H]
     \centering
     \subfloat[][ABM Simulation]{
     \includegraphics[width=0.4\textwidth]{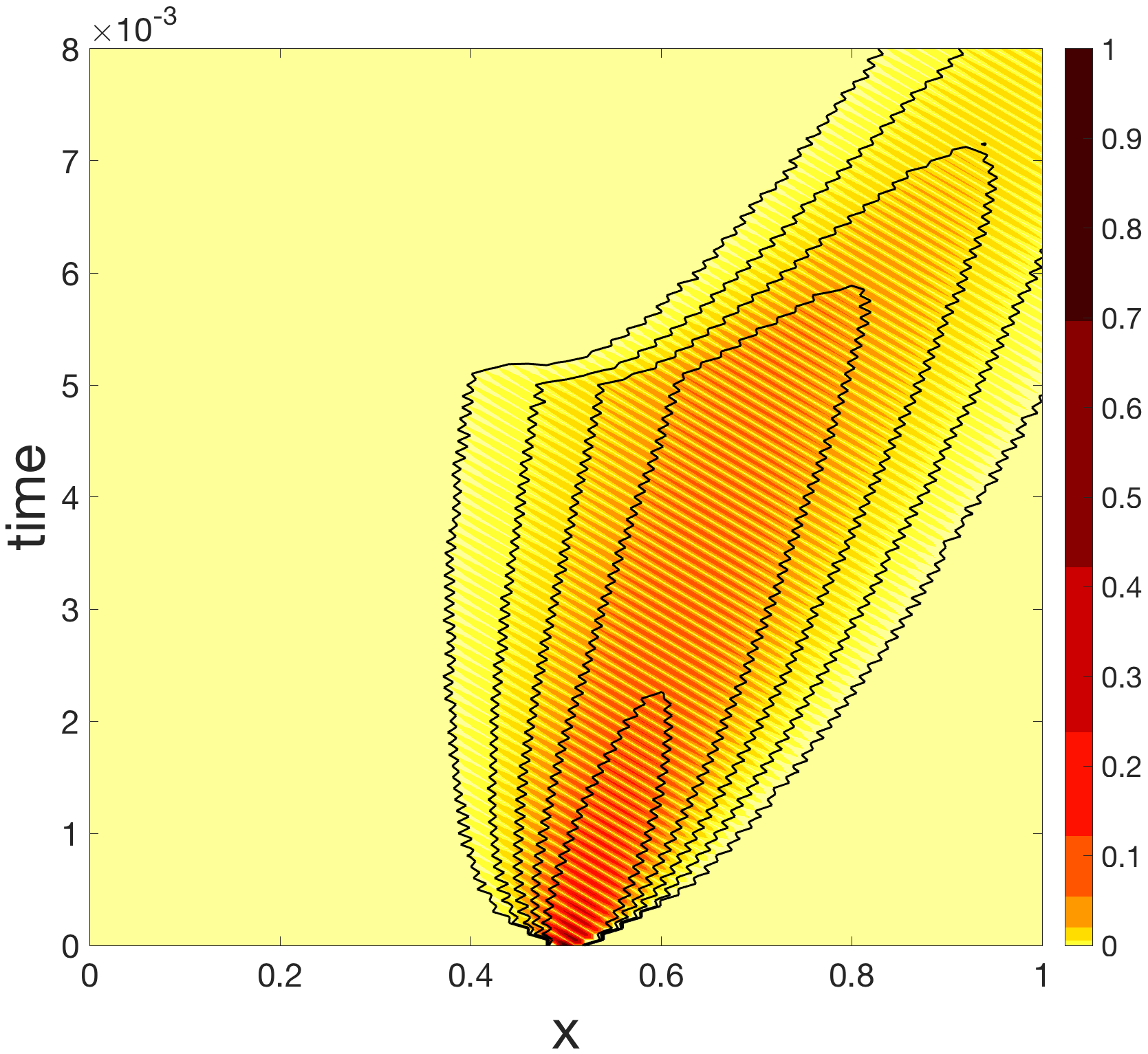}\label{fig: ABMBiasSurf}}
     \subfloat[][PDE Approximation]{
     \includegraphics[width=0.4\textwidth]{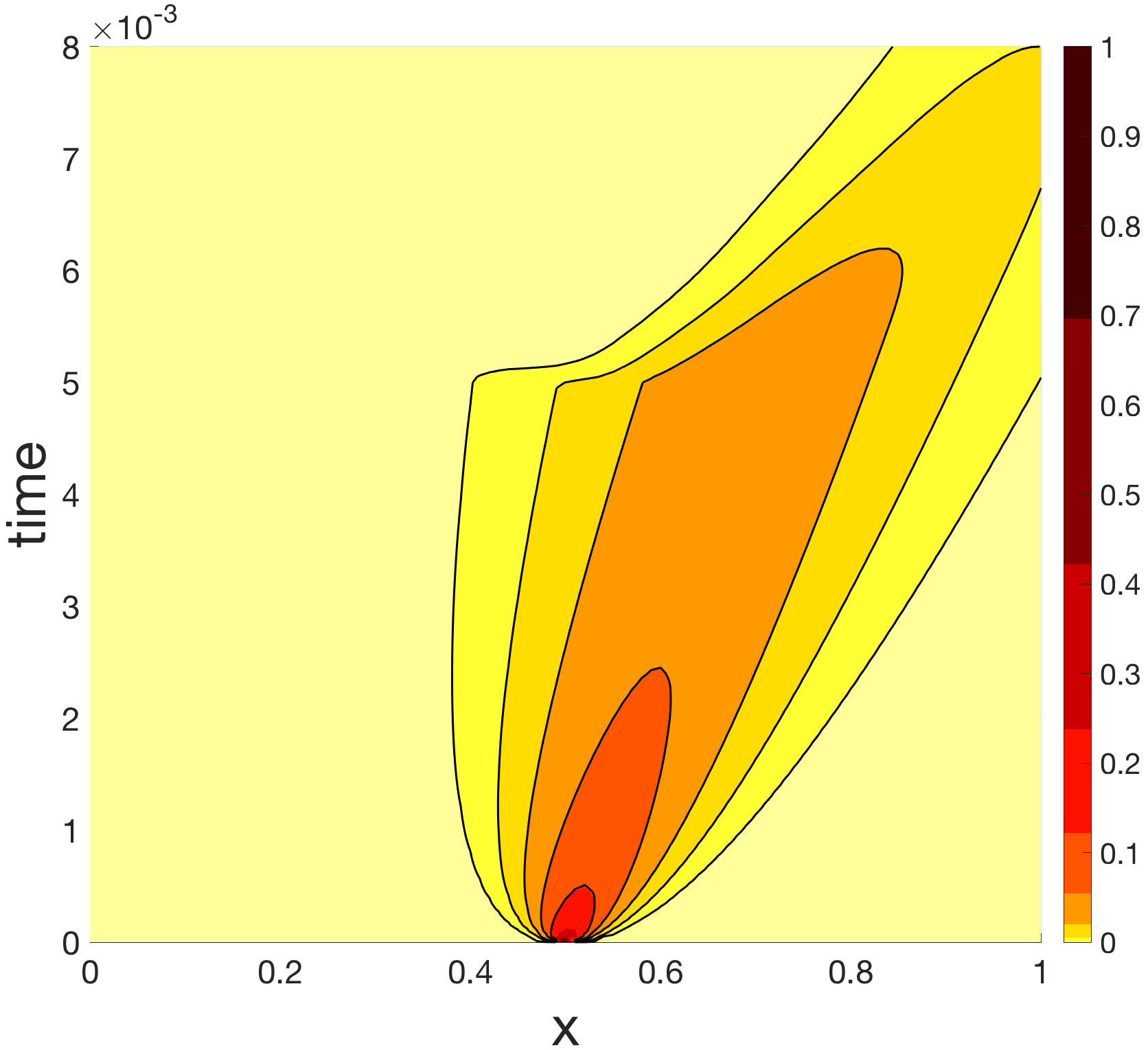}\label{fig: PDEBiasSurf}}
     \caption{Comparison of the probability distribution of live agents (shown in color) at locations $x\in[0,1]$ and at time points $t\in[0,0.1]$. The ABM results are a mean of 100,000 simulations. \cso{For both, the chemical concentration is $C(x)=1/(1+10(x-0.5)^2)$ and movement is biased to the right}.}
     \label{fig: BiasSurf}
\end{figure}

\begin{figure}[h]
     \centering
     \subfloat[][Survival Probability]{
     \includegraphics[width=0.3\textwidth]{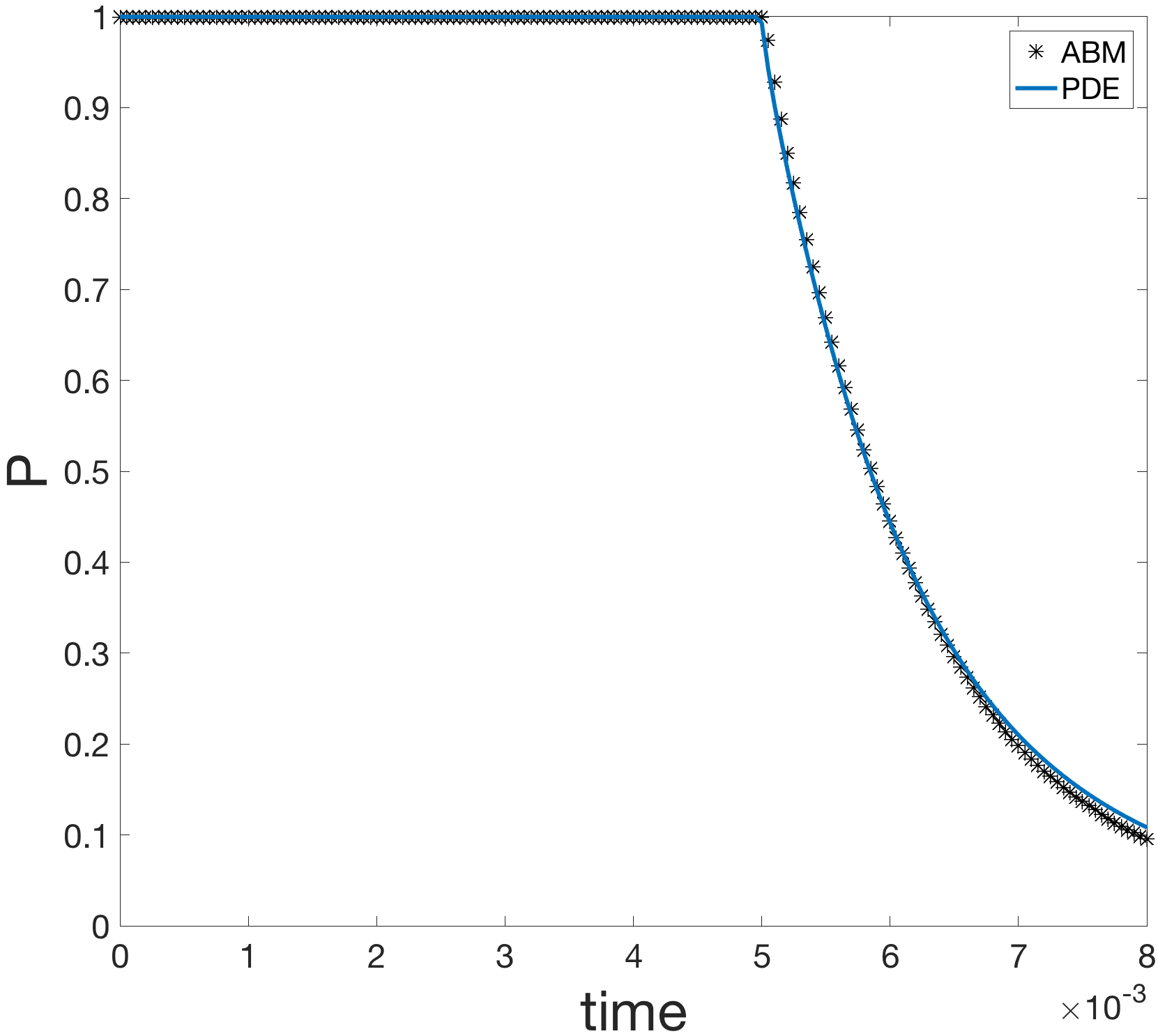}\label{fig: BiasSurvive}}
     \subfloat[][Mean Location]{
     \includegraphics[width=0.3\textwidth]{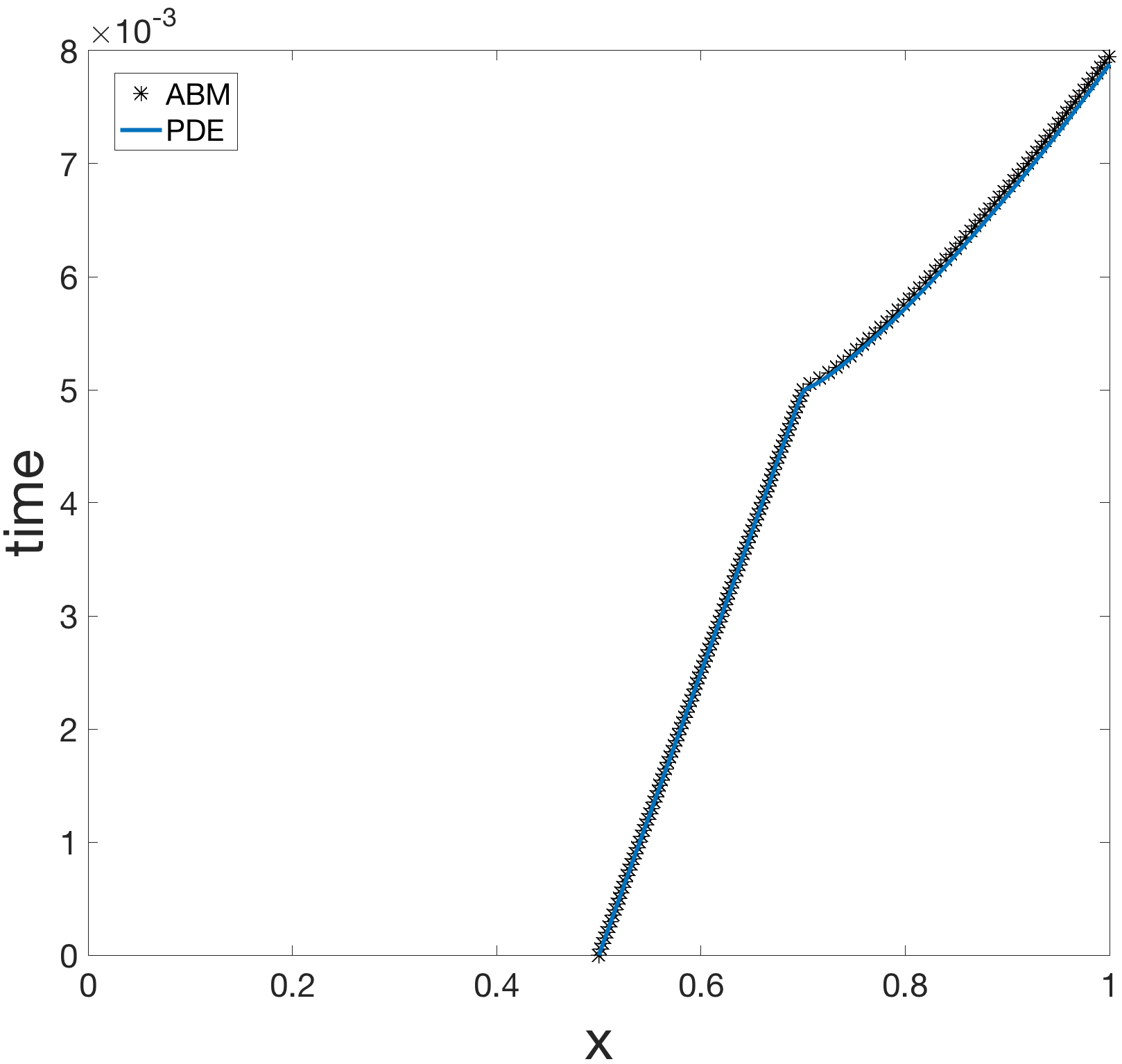}\label{fig: BiasMean}}
     \subfloat[][Standard Deviation]{
     \includegraphics[width=0.3\textwidth]{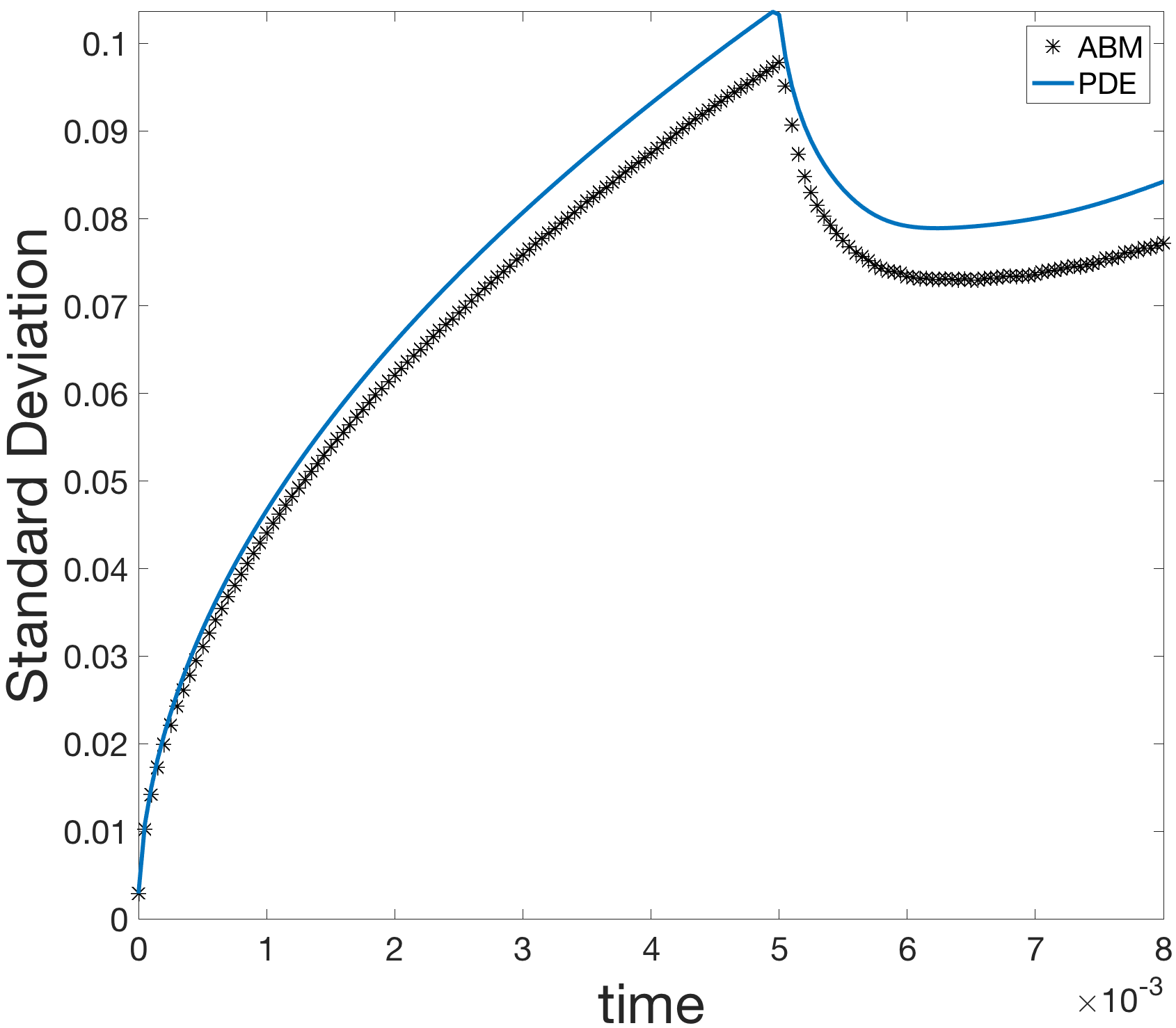}\label{fig: BiasStd}}
     \caption{Comparison of the survival probability, as well as mean and standard deviation of the live agent locations, for the ABM (black $*$) and the numerical PDE  solution (blue line) at each time-step for \cso{biased motion and $C(x)=1/(1+10(x-0.5)^2)$}. }
     \label{fig: BiasData}
\end{figure}
The survival probability for the ABM and continuum PDE model is shown in Fig.~\ref{fig: BiasSurvive}, and again there is good agreement between the ABM and PDE solution. In comparison to the unbiased movement case in Fig.~\ref{fig: RatlSurvive}, Fig.~\ref{fig: BiasSurvive} with biased movement begins decreasing at an earlier time, but then decreases at a slower rate.}

\subsection{The 2-dimensional model}
We can readily extend the analysis and numerical methods in Sections \ref{sec:analysis}-\ref{sec:numerics} to the 2-dimensional case.  
To account for the increased stochasticity of adding an additional dimension, we initialize 10 million agents. 
The agents in the ABM move with spatial step size of $\dx=\dy=0.01$ and time step $\dt=\dx^2/2$.  Similarly, the PDE model utilizes a spatial step size of $\cmy{dx = dy = \dx}$ and a time step of \cmy{$dt=\dt$}, and cumulative absorption of $d\xi=\xi_c/1000$. For both the ABM and PDE model, we set $\beta(\x)=\alpha \int_{B(\x,\dx/2)}C(\x)\,d\x$ where the chemical concentration is $C(x,y)=0.5(\sin(4\pi x)\sin(4\pi y)+1)$ and the chemical absorption threshold is $\xi_c=2\dx\dy\dt$.

The surface plot of the concentration local to the initialized agents in $[0,1]\times [0,1]$ is shown in \cmy{the dashed line contour plots in Fig.~\ref{fig: 2DABM} and Fig.~\ref{fig: 2DPDE}, where lighter colored lines denote a value closer to 0 and darker colored lines denote a value closer to 1.} 
The concentration is symmetric along the lines $y=x$ and $y=1-x$.
Near the initial location at $(0.5,0.5)$, there are local concentration minimums along the line $y=-x$.  
Thus, it makes sense that the probabilities for agents in the initial live state tend to be higher close to these chemical sinks, as shown in Figs.~\ref{fig: 2DABM} and \ref{fig: 2DPDE}.  
In fact, Figs.~\ref{fig: 2DABM}(b)-(c) and \ref{fig: 2DPDE}(b)-(c) show the probability density function mode bifurcation.  
That is, the chemical distribution causes $p_i^m$ to evolve into a bi-modal distribution, with each peak located on the line $y=1-x$ and equidistant to the line $y=x$. Again, when comparing the survival probability as a function of time, we observe excellent agreement between the ABM and continuum PDE (Fig.~\ref{fig: 2DSurvive}).
\begin{figure}[H]
\centering
\subfloat[][$t=0.0002$]{
\includegraphics[width=.22\textwidth]{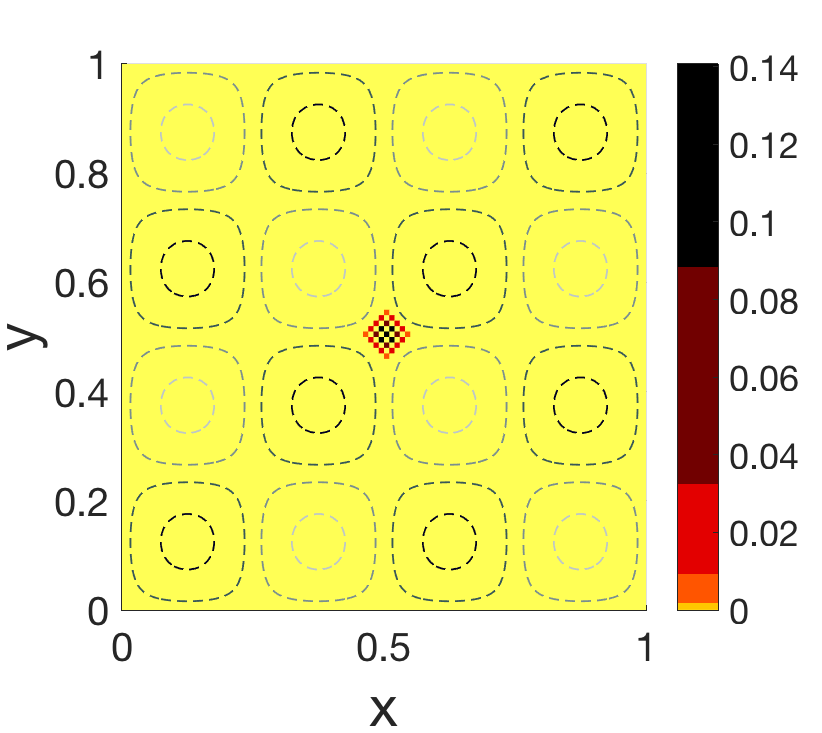}}
\,
\subfloat[][$t=0.0017$]{
\includegraphics[width=.22\textwidth]{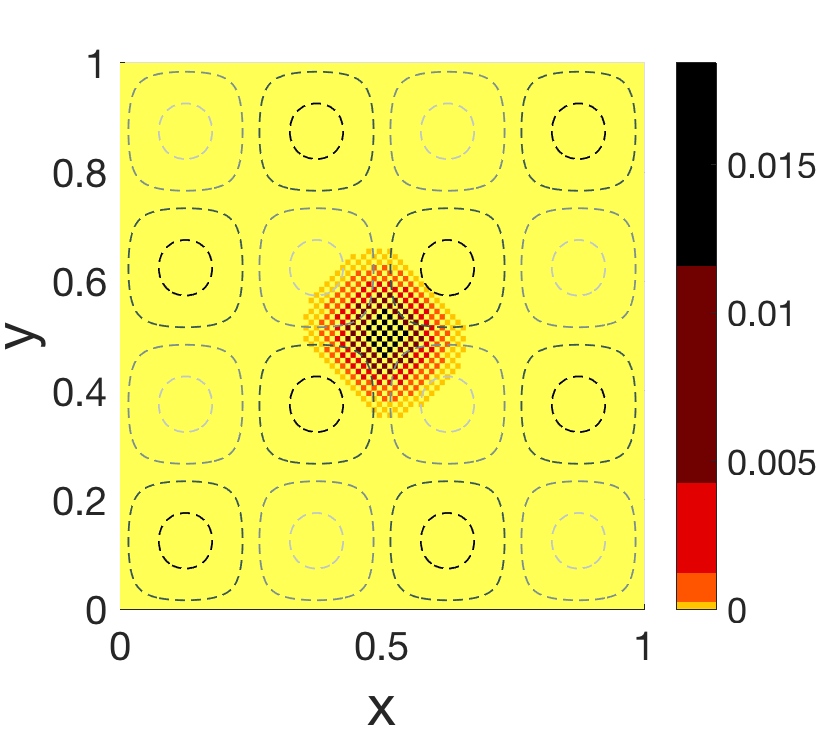}}
\,
\subfloat[][$t=0.0024$]{
\includegraphics[width=.22\textwidth]{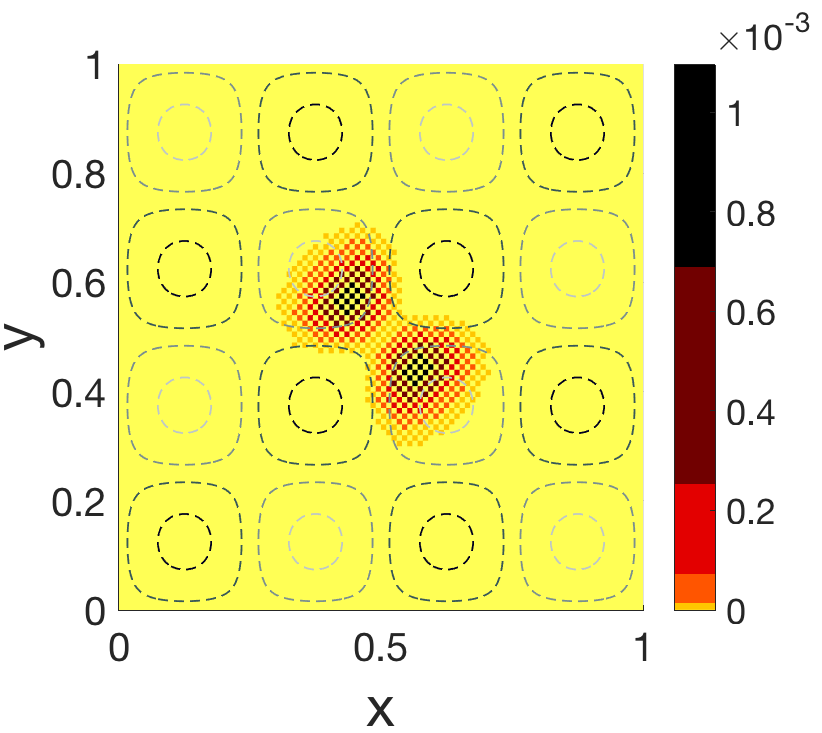}\label{fig: ABMsplit}}
\,
\subfloat[][$t=0.0050$]{
\includegraphics[width=.22\textwidth]{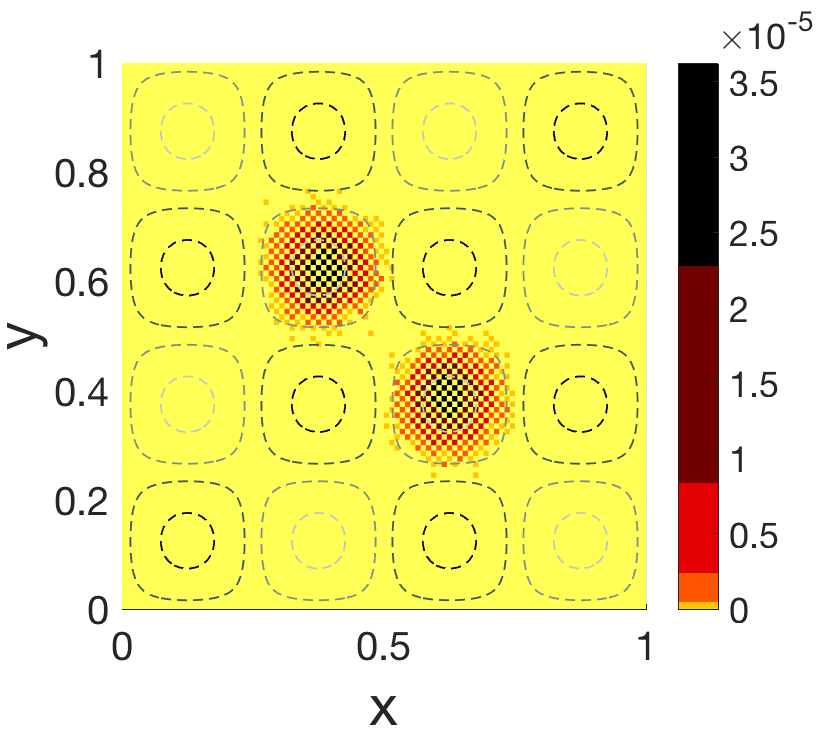}}
\caption{Probability distribution of live agents for the ABM (shown in color) in the region $[0,1]\times[0,1]$ at 4 different time points with $\alpha = 0.10$. The ABM results are a mean of 10 million agents.  \cmy{The dashed-line contour plot indicates the chemical concentration, $C(x,y)$.}}
\label{fig: 2DABM}
\end{figure}

\begin{figure}[H]
\centering
\subfloat[][$t=0.0002$]{
\includegraphics[width=.22\textwidth]{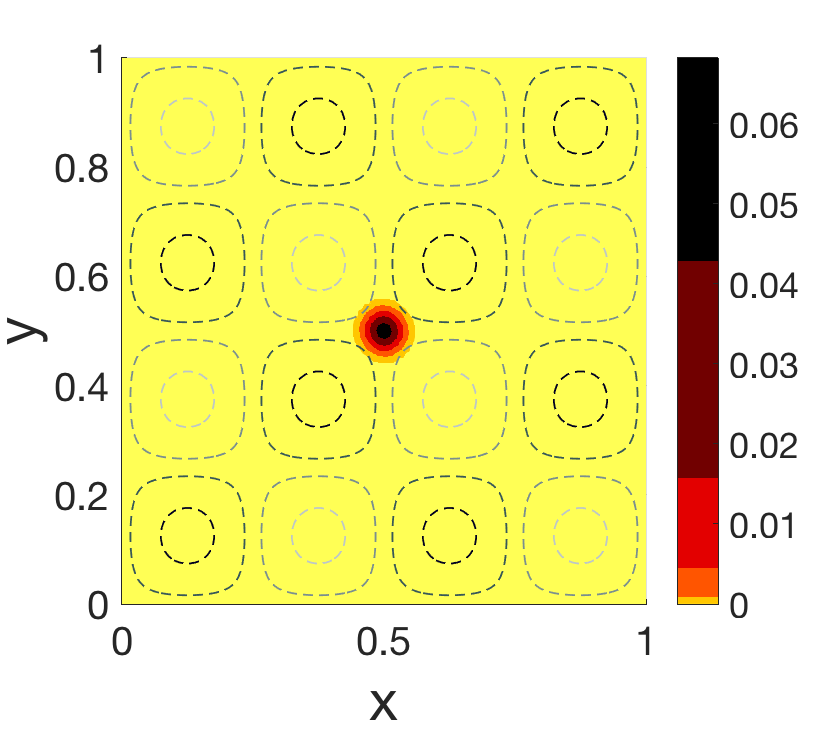}}
\,
\subfloat[][$t=0.0017$]{
\includegraphics[width=.22\textwidth]{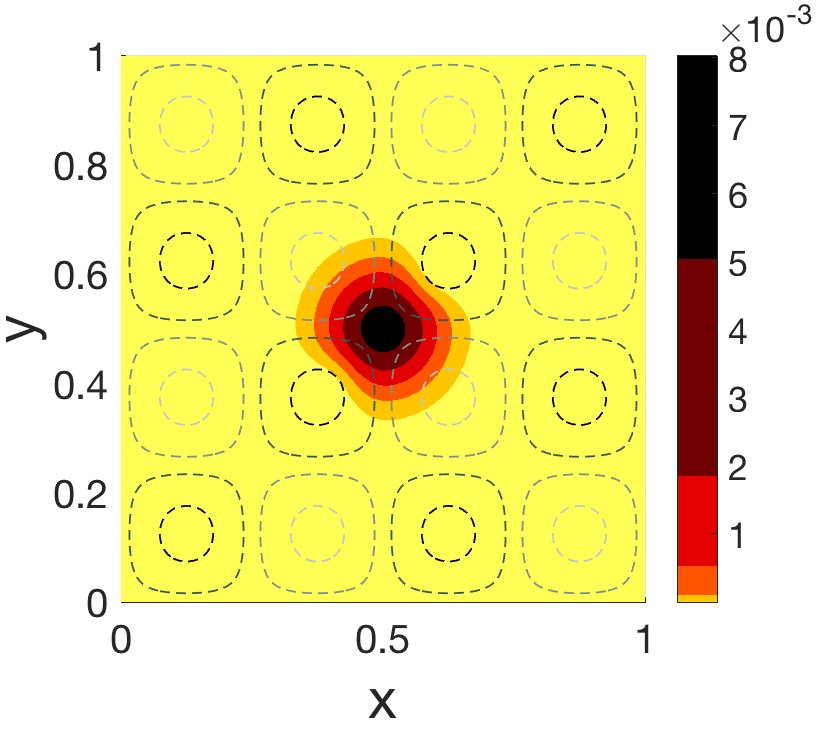}}
\,
\subfloat[][$t=0.0024$]{
\includegraphics[width=.22\textwidth]{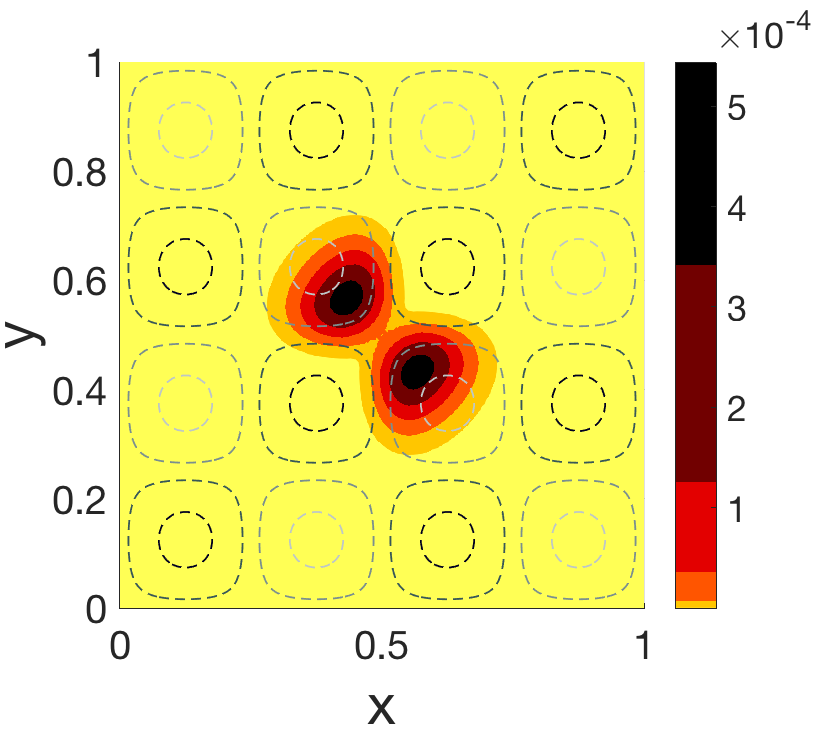}\label{fig: PDEsplit}}
\,
\subfloat[][$t=0.0050$]{
\includegraphics[width=.22\textwidth]{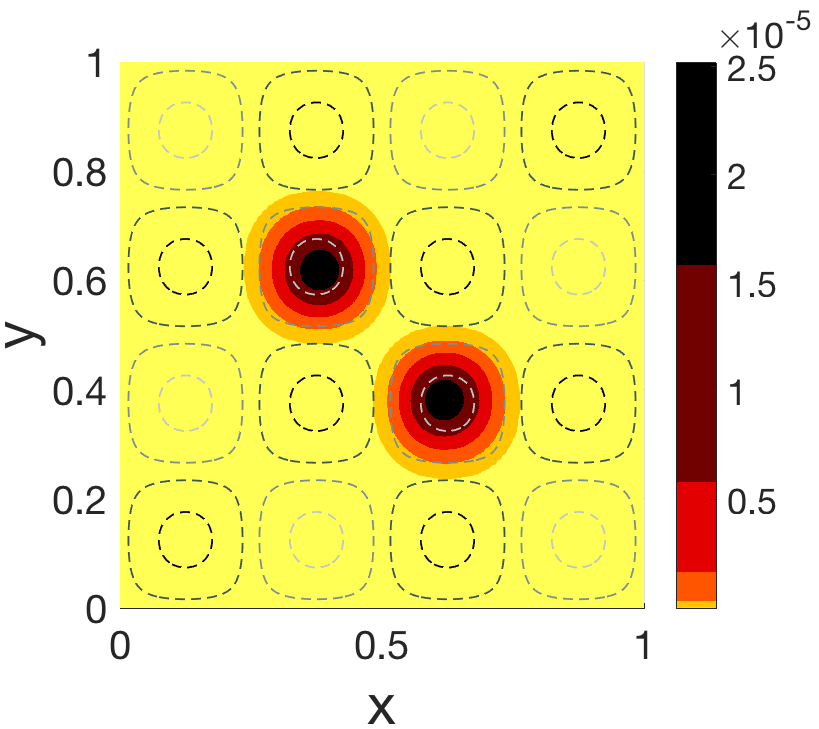}}
\caption{Probability distribution of live agents for the numerical PDE solution (shown in color) in the region $[0,1]\times[0,1]$ at 4 different time points with $\alpha = 0.10$.  \cmy{The dashed-line contour plot indicates the chemical concentration, $C(x,y)$}.}
\label{fig: 2DPDE}
\end{figure}

\begin{figure}[h]
     \centering
     \subfloat[][Survival Probability]{
     \includegraphics[width=0.3\textwidth]{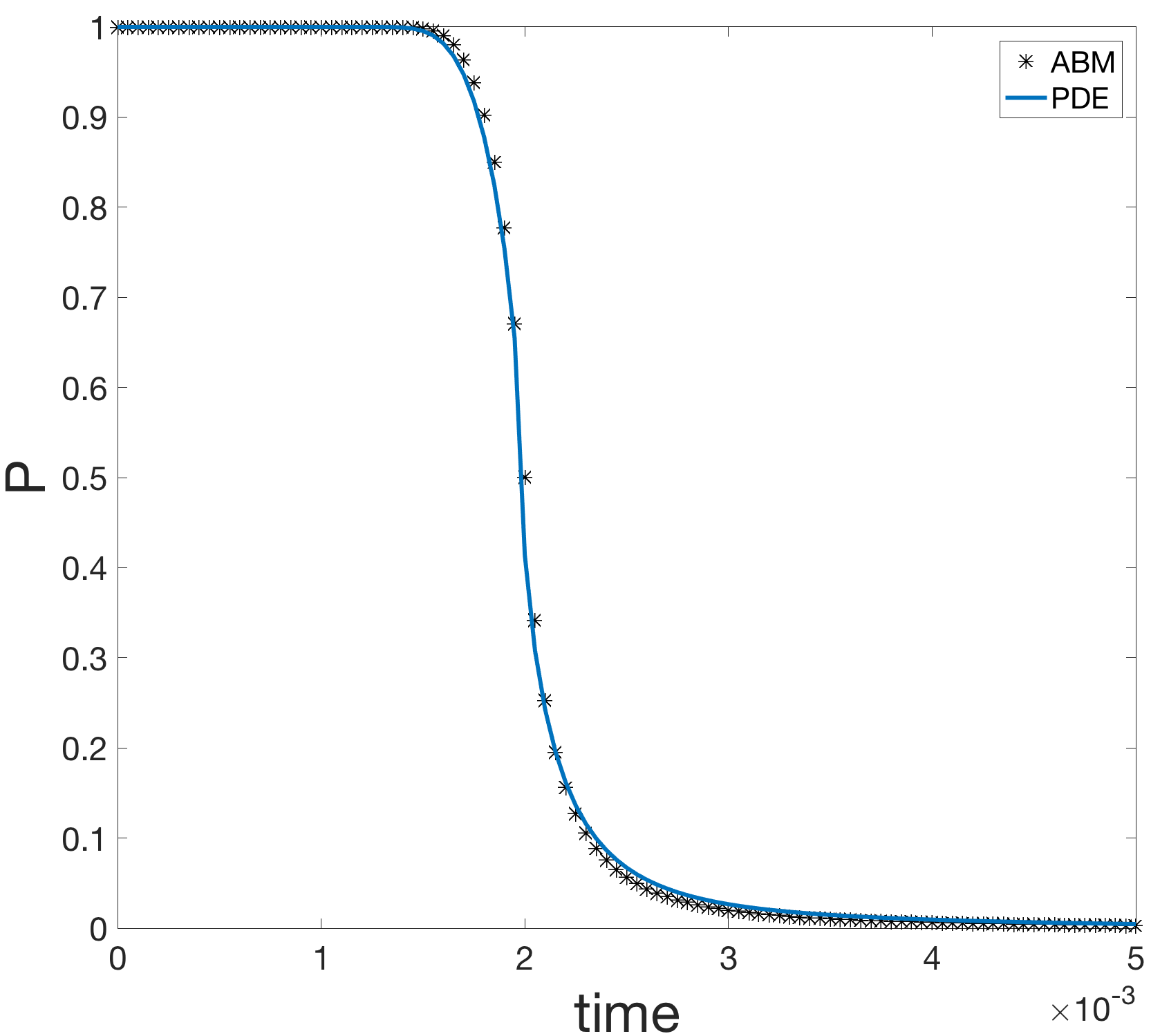}\label{fig: 2DSurvive}}
     \,
     \subfloat[][Mean Location]{
     \includegraphics[width=0.3\textwidth,height=1.45in]{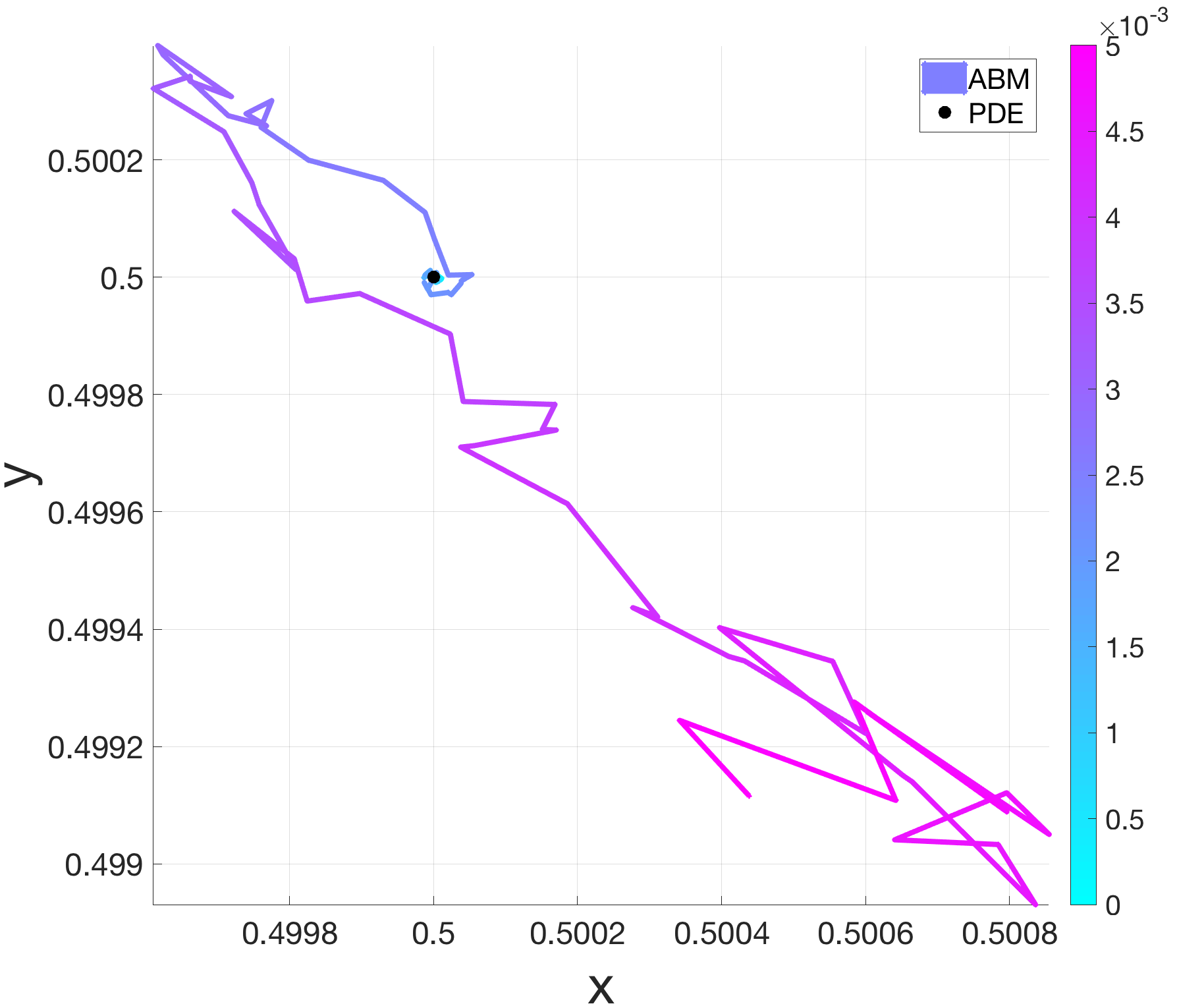}\label{fig: 2Dmean}}
     \subfloat[][Standard Deviation]{
     \includegraphics[width=0.3\textwidth]{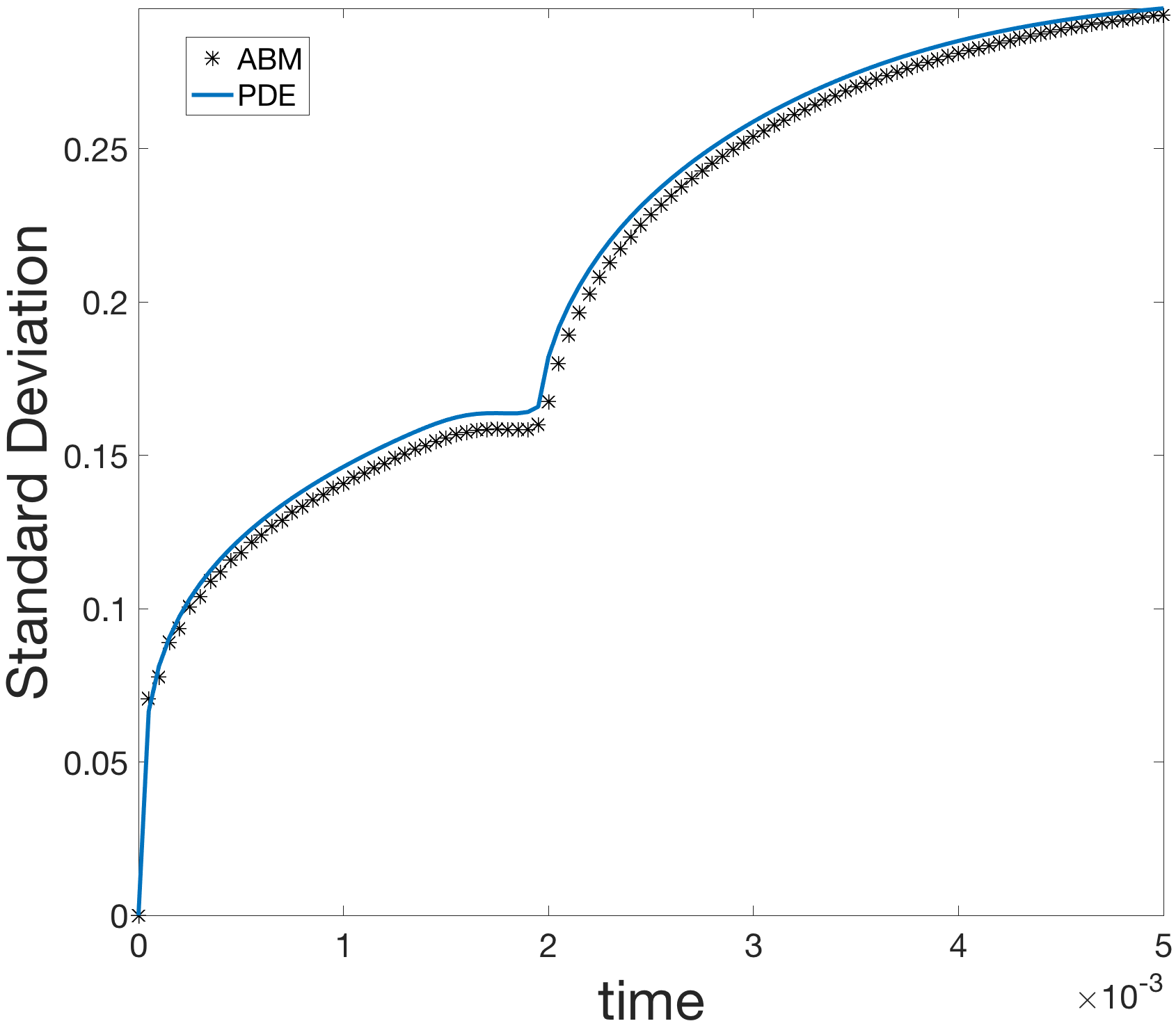}\label{fig: 2Dstd}}
     \caption{Comparison of the survival probability, mean, and standard deviation of the ABM and the semi-discrete numerical PDE solution at each time-step.  
The color of the ABM mean in (b) corresponds to the time-step.  Because the mean location of the numerical PDE approximation is located at (0.5,0.5) for every time-step, we label it using a black $\bullet$ to make a visual comparison with the ABM mean easier.
     }
     \label{fig: 2D}
\end{figure}

Fig.~\ref{fig: 2Dmean} demonstrates that the mean location of the PDE approximation remains constant at $(0.5,0.5)$.  The ABM mean is not constant.  However, since the ABM mean is contained within the region $B((0.5,0.5), \dx/2)$, and travels away from the PDE mean for times $t>0.003$, we can assume that this is due to the greater influence of stochastic noise as the number of agents in the initial state becomes relatively small.  
Since there are sufficiently many agents towards the end of the simulation and the mean during this simulation is within the control region $B((0.5,0.5),\dx/2)$, we see in Fig.~\ref{fig: 2Dstd} the standard deviation of the ABM data is not unduly influenced by the stochastic noise.  
Hence, the ABM and PDE standard deviation curves match reasonably well throughout the simulations.

\cmy{
We can see the difference in how the model develops if we decrease the absorption proportion parameter to $\alpha = 0.01$ in Fig.~\ref{fig: 2DABM2} and Fig.~\ref{fig: 2DPDE2}.  The agents diffuse for a longer time period before absorbing sufficient chemical to split.  With $\alpha = 0.01$, the distribution forms two peaks around $t=0.02$ (as shown in Fig.~\ref{fig: 2DABM2}(d), \ref{fig: 2DPDE2}(c)), whereas with $\alpha = 0.10$, the distribution forms two peaks around $t=0.0024$ (as shown in Fig.~\ref{fig: 2DABM}(c), \ref{fig: 2DPDE}(c)).  The pattern is different from $\alpha = 0.10$ as $t$ increases further in that since $p_i^m$ initially diffuses farther before changing states, the distribution will settle along additional chemical sinks (as shown in Fig.~\ref{fig: 2DABM2}(d), \ref{fig: 2DPDE2}(d)). 

\begin{figure}[H]
\centering
\subfloat[][$t=0.0002$]{
\includegraphics[width=.22\textwidth]{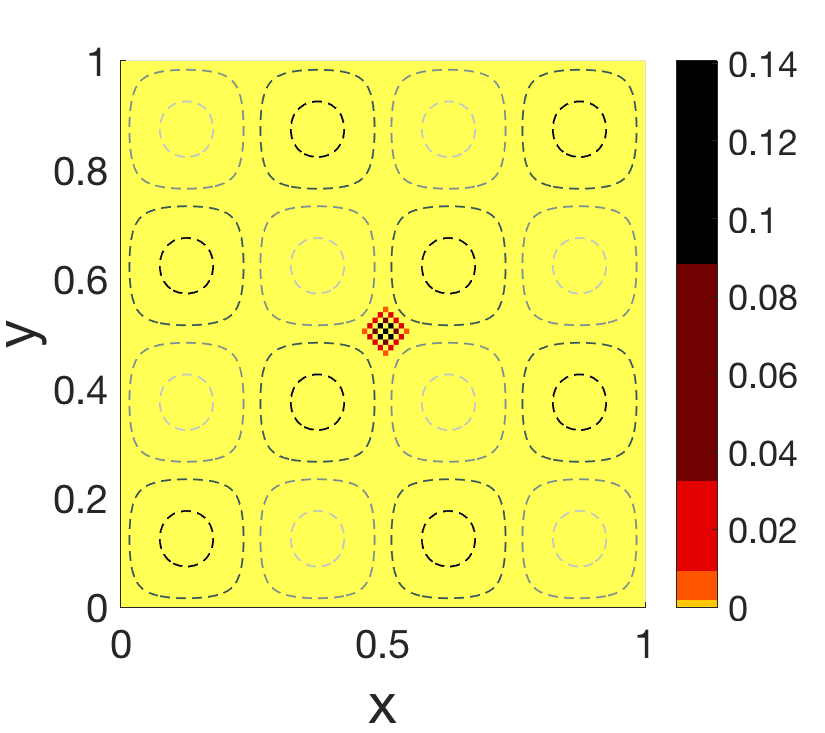}}
\,
\subfloat[][$t=0.0100$]{
\includegraphics[width=.22\textwidth]{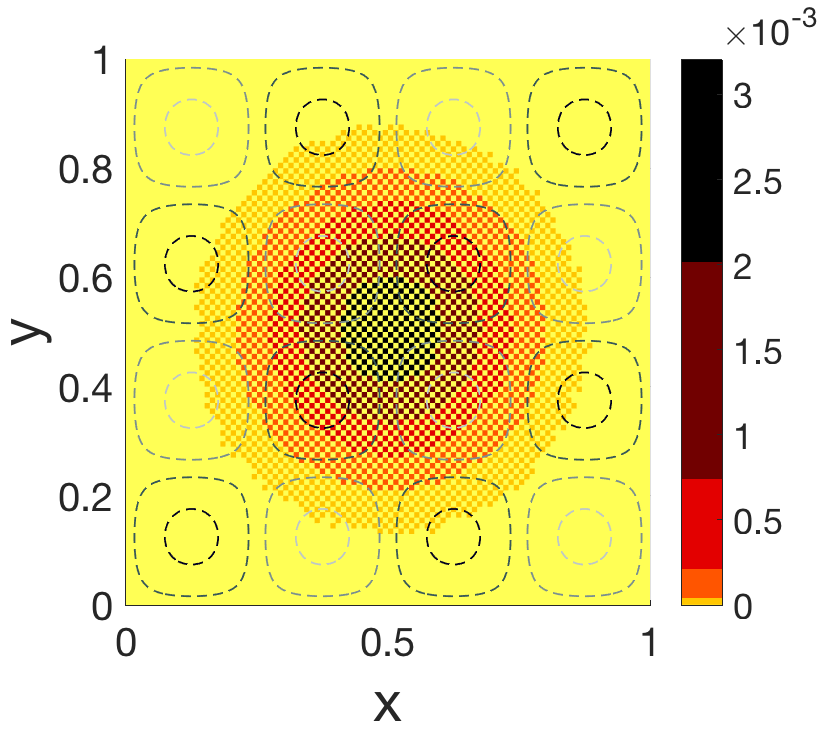}}
\,
\subfloat[][$t=0.0200$]{
\includegraphics[width=.22\textwidth]{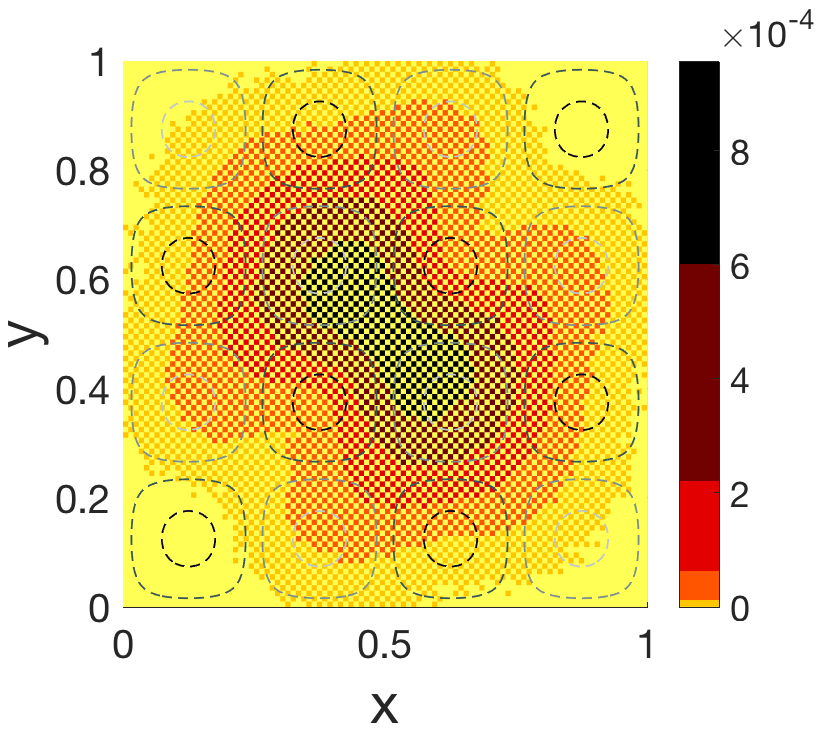}\label{fig: ABMsplit2}}
\,
\subfloat[][$t=0.0375$]{
\includegraphics[width=.22\textwidth]{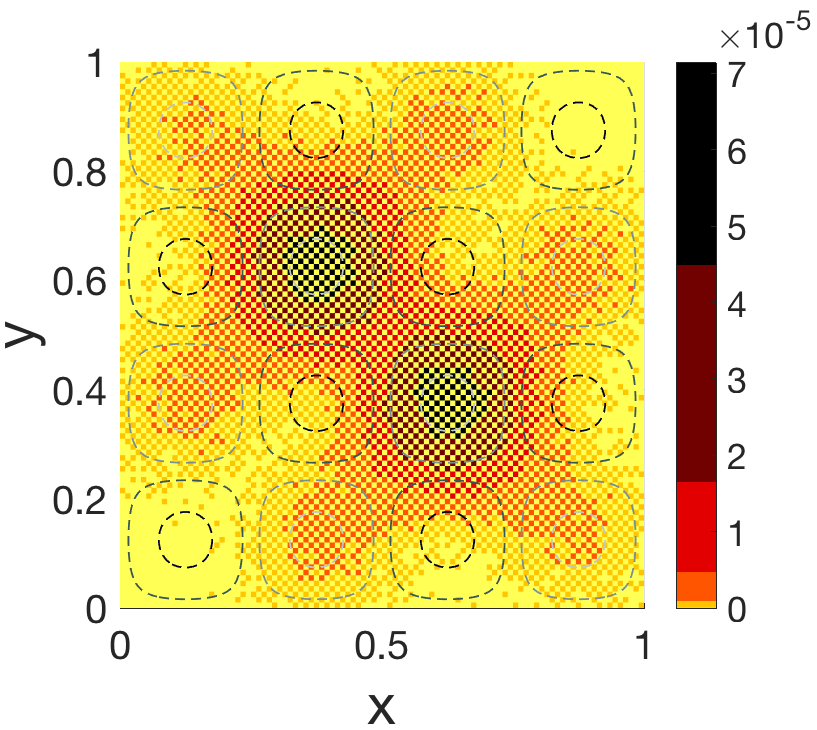}\label{fig: ABMsplit3}}
\caption{\cmy{Probability distribution of live agents for the ABM (shown in color) in the region $[0,1]\times[0,1]$ at 4 different time points with $\alpha = 0.01$. The ABM results are a mean of 10 million agents.  The dashed-line contour plot indicates the chemical concentration, $C(x,y)$.}}
\label{fig: 2DABM2}
\end{figure}

\begin{figure}[H]
\centering
\subfloat[][$t=0.0002$]{
\includegraphics[width=.22\textwidth]{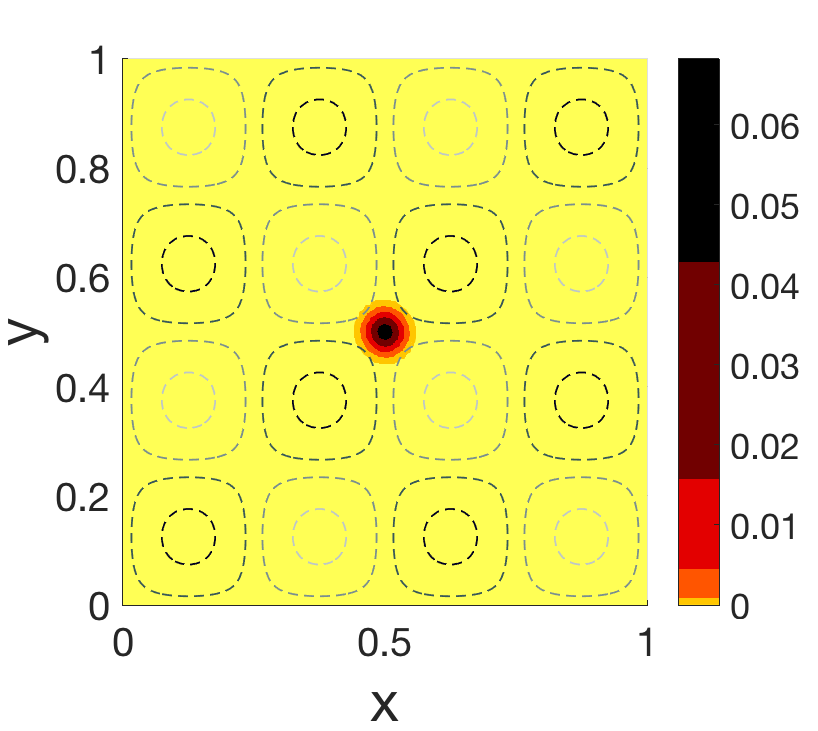}}
\,
\subfloat[][$t=0.0100$]{
\includegraphics[width=.22\textwidth]{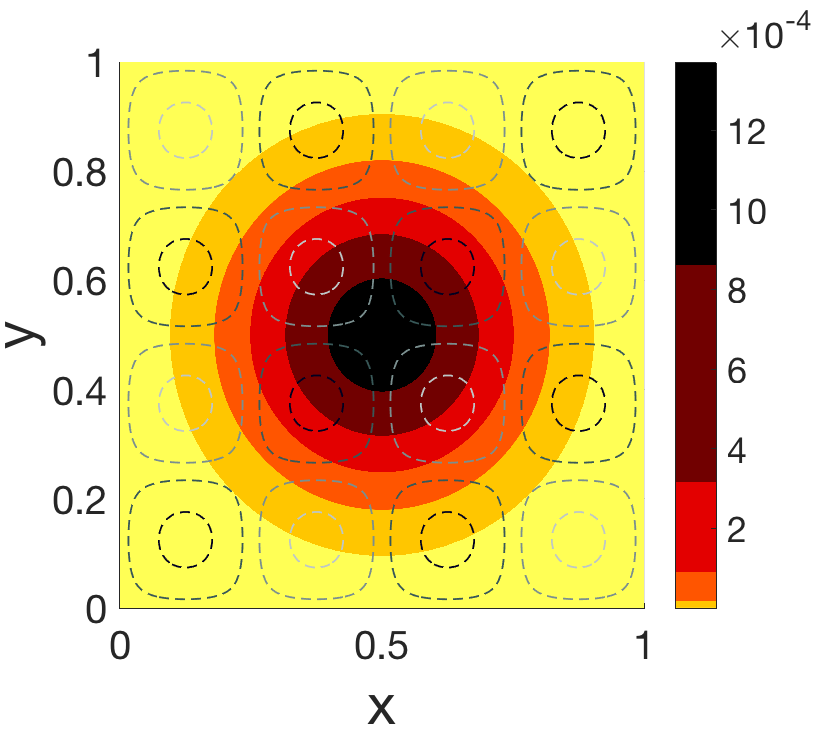}}
\,
\subfloat[][$t=0.0200$]{
\includegraphics[width=.22\textwidth]{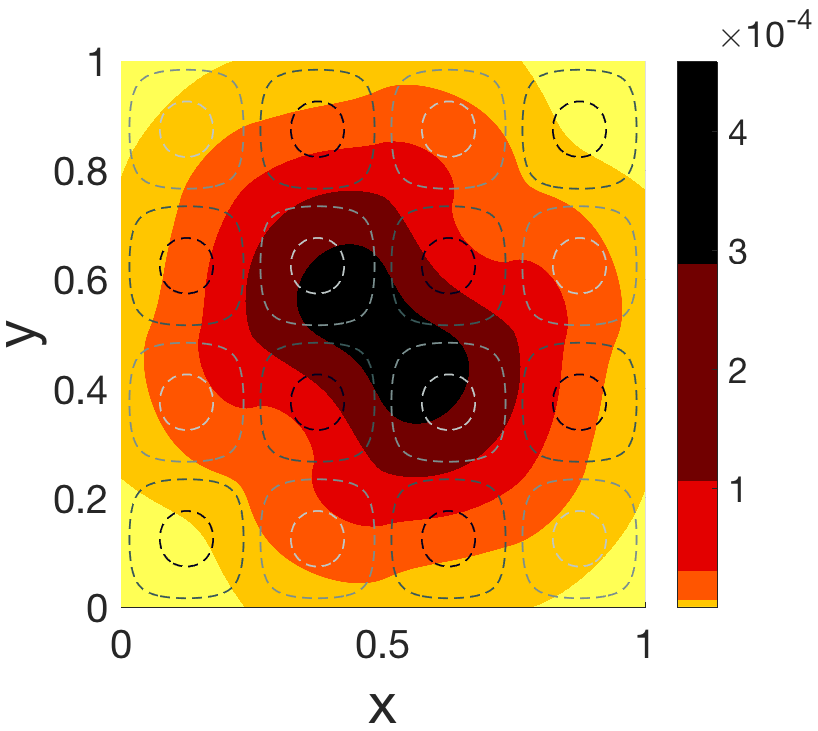}\label{fig: PDEsplit2}}
\,
\subfloat[][$t=0.0375$]{
\includegraphics[width=.22\textwidth]{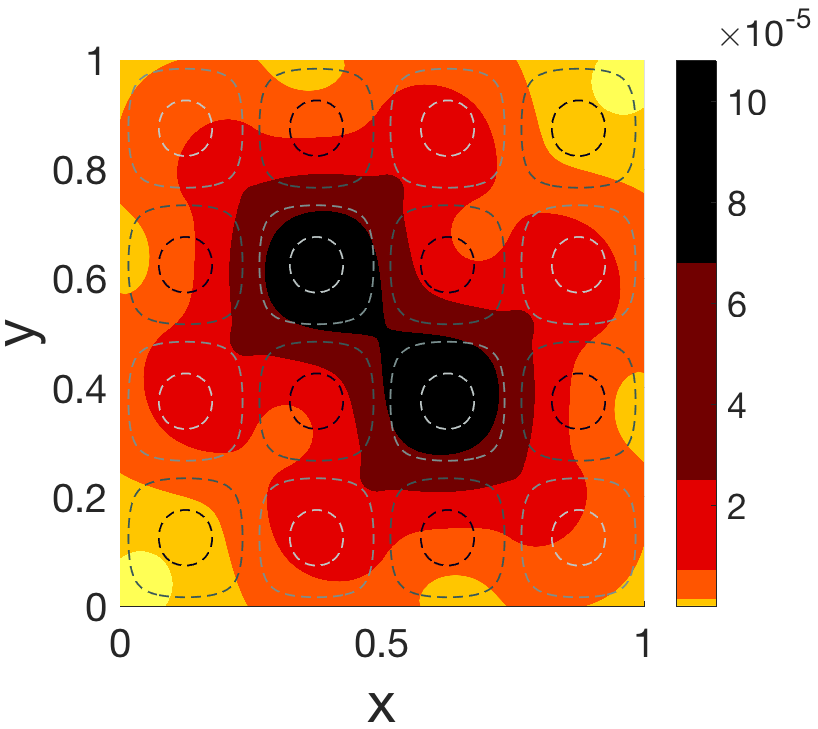}\label{fig: PDEsplit3}}
\caption{Probability distribution of live agents for the numerical PDE solution (shown in color) in the region $[0,1]\times[0,1]$ at 4 different time points with $\alpha = 0.01$.  The dashed-line contour plot indicates the chemical concentration, $C(x,y)$.}
\label{fig: 2DPDE2}
\end{figure}
If we instead choose an $\alpha \ll 1$, the diffusion time of the agents is much faster than the time for the agents to absorb chemical to capacitance.  In that case, we can simplify and re-frame the model as a diffusion-dominant absorption model.  The derivation and examples of such a model are further developed in Section \ref{sec: scaling}.}

\cmy{
\section{Relative scaling approximations \label{sec: scaling}}
The above examples assumed a particular scaling of parameters in \eqref{eq: PDE1d} and \eqref{eq: PDE}.  However, other scaling relations may be possible and we can further simplify the equations or analysis.
First, let us non-dimensionalize the absorption variable, $\xi$, by using the scaling factor $\xi_c$ to obtain $\bar{\xi} = \xi/\xi_c$.} \cso{We can rewrite \eqref{eq: PDE1d} as}
\cmy{
\begin{equation}
\begin{cases}
& U_t + \frac{\beta(x)}{\xi_c}U_{\bar{\xi}} = DU_{xx}, \hspace{1.65cm} (x,\bar{\xi})\in \Omega^1, t>0\\
& U = \phi(x,\bar{\xi}), \hspace{3.15cm} (x,\bar{\xi})\in \Omega^1, t=0 \\
& \lim_{|x|\to \infty}U = 0, \hspace{1.7cm}\qquad (x,\bar{\xi})\in \Omega^1, t>0,
\end{cases}
\label{eq: PDE1dNonD}
\end{equation}
with the agent changing state when $\xi \geq 1$.} \cso{There are three different regimes based on the term $\beta(x)/(D\xi_c)$. } 

\cmy{First, we will investigate when $\beta(x)/(D\xi_c) \gg 1$.  This occurs when the agent absorbs chemical above its capacitance before diffusion moves the agent, as can be observed in Fig \ref{fig: AbsorptionDominant}(a).  Assuming $\beta(x)>0$ is continuous, we can asymptotically simplify \eqref{eq: PDE1dNonD} to
\begin{equation}
\begin{cases}
& U_t + \frac{\beta(x)}{\xi_c}U_{\bar{\xi}} = 0, \hspace{1.5cm} (x,\bar{\xi})\in \Omega^1, t>0\\
& U = \phi(x,\bar{\xi}), \hspace{2.2cm} (x,\bar{\xi})\in \Omega^1, t=0,
\end{cases}
\label{eq: PDE1dNonDAbs}
\end{equation}
with a solution $U(x,t,\bar{\xi}) = \phi(x,\bar{\xi}-\beta(x)t/\xi_c)$ for all $x\in \R$.  An example solution of the total density in the live state in an absorption-dominant parameter regime is shown in Fig \ref{fig: AbsorptionDominant}(b).  Note that as $\langle \beta(x) \rangle / (D\xi_c)$ approaches $\infty$, the ABM and PDE densities converge.

\begin{figure}[h]
\centering
\subfloat[][ABM simulation surface plot.]{
\includegraphics[width=.4\textwidth]{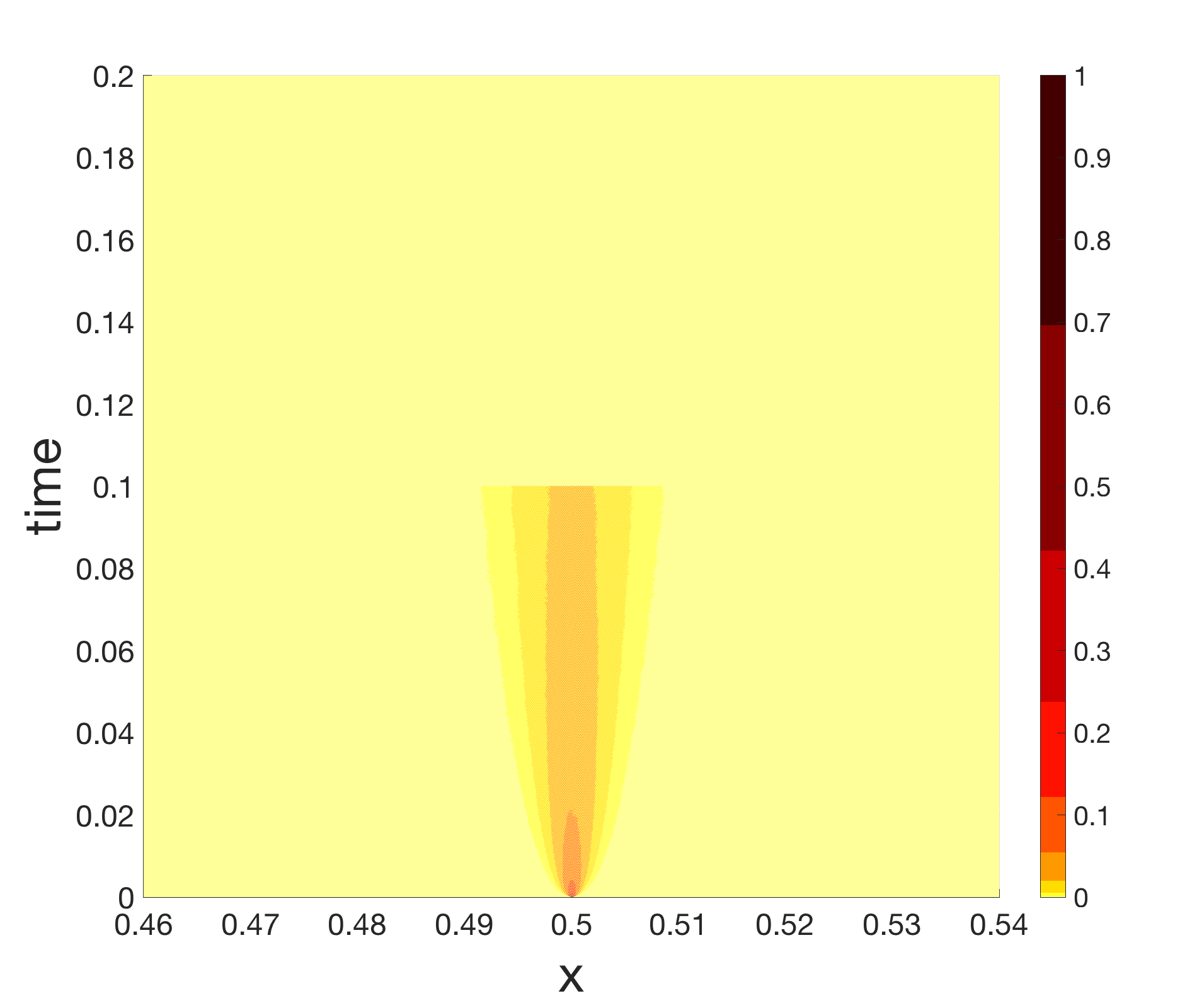}
\label{fig: AbsorptionDominant1}}
\,
\subfloat[][Comparison between ABM and PDE density in live state with respect to time.]{
\includegraphics[width=.4\textwidth]{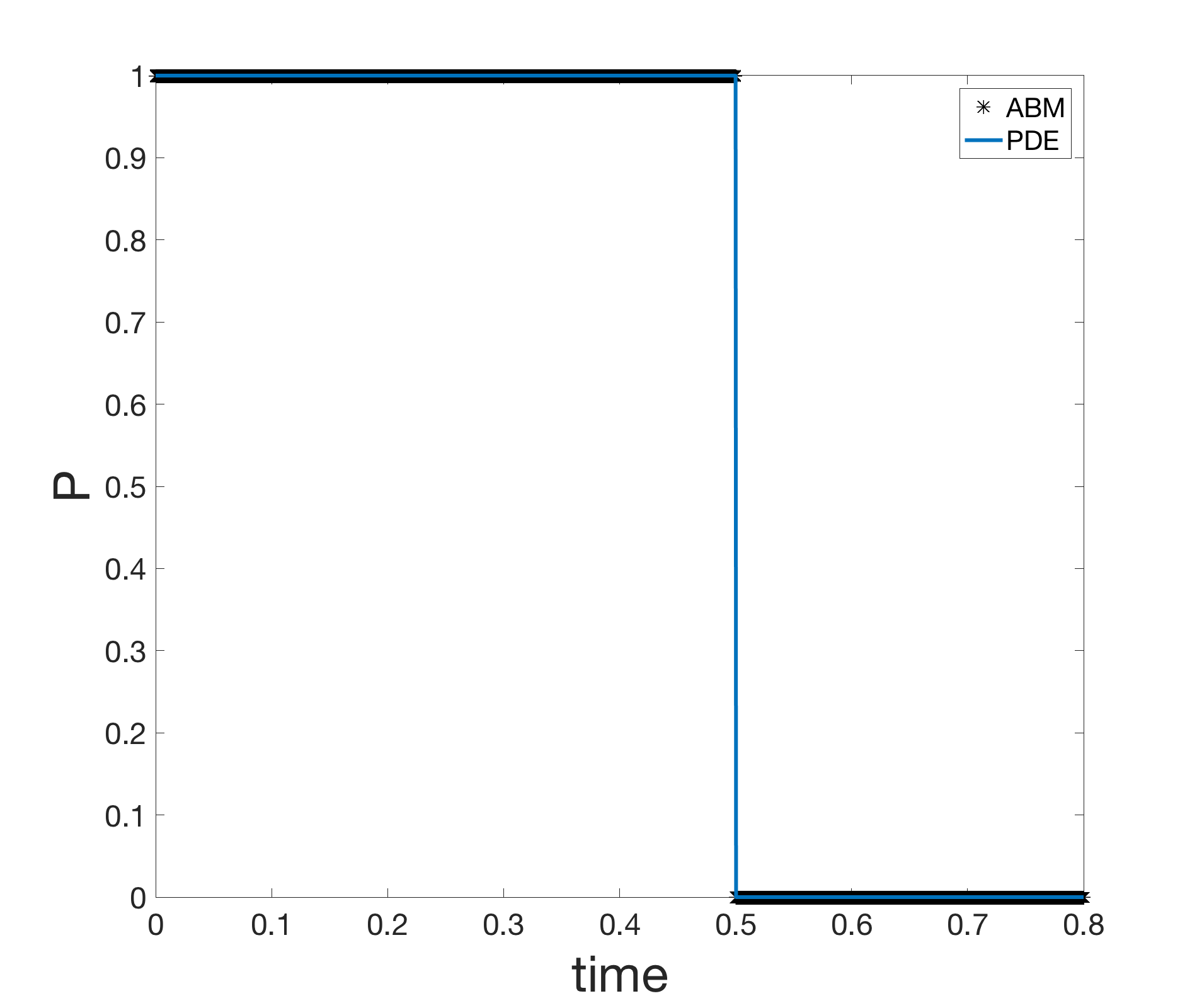}\label{fig: AbsorptionDominant2}}
\caption{Absorption-dominant parameter regime comparison between ABM and PDE density in live state with respect to time.  
Here $\dx=\dt=1\times 10^{-4}$, $\beta(x)=\dx 0.1 \frac{1}{2}(1+\sin \pi x)$, $\xi_c=1\times 10^{-6}$.  Agents are initialized at $x_0 = 0.5$.  
Thus, \cso{$\beta(x_0)/(D \xi_c ) = 200,000$.}}
\label{fig: AbsorptionDominant}
\end{figure}

Second, we will investigate when $\beta(x)/(D\xi_c) \ll 1$.  This occurs when the agent diffuses much faster than the agent absorbs chemical, as is shown in Fig \ref{fig: DiffusionDominant}(a).  
We cannot make an asymptotic argument, ignoring the absorption term, since that is our primary interest in this model.  Thus, we make the assumption that the density of the agents in the live state quickly becomes uniform over the spatial coordinate.  This assumption allows us to collapse the spatial coordinate and have a solution solely in absorption and time dimensions.  
We can define the PDE initial condition as $\bar{\phi}(\bar{\xi}) := \int_\R \phi(x,\bar{\xi})\,dx$ and we can redefine the PDE absorption term as the average value of $\beta$ in the spatial domain, $ \langle \beta(x) \rangle$, to obtain the following PDE,
\begin{equation}
\begin{cases}
& U_t + \frac{\langle \beta(x) \rangle}{\xi_c}U_{\bar{\xi}} = 0, \hspace{1.5cm} \bar{\xi} \in [0,\infty), t>0\\
& U = \bar{\phi}(\bar{\xi}), \hspace{3.15cm} \bar{\xi}\in [0,\infty), t=0.
\end{cases}
\label{eq: PDE1dNonDDiff}
\end{equation}
The solution is $U(\bar{\xi},t) = \bar{\phi}\left(\bar{\xi}-\langle \beta(x) \rangle t/\xi_c\right)$.
An example solution of the total density in the live state in a diffusion-dominant parameter regime is shown in Fig \ref{fig: DiffusionDominant}(b).  Note that as $\langle \beta(x) \rangle / (D\xi_c)$ approaches 0, the ABM and PDE densities converge.

\begin{figure}[h]
\centering
\subfloat[][ABM simulation surface plot.]{
\includegraphics[width=.4\textwidth]{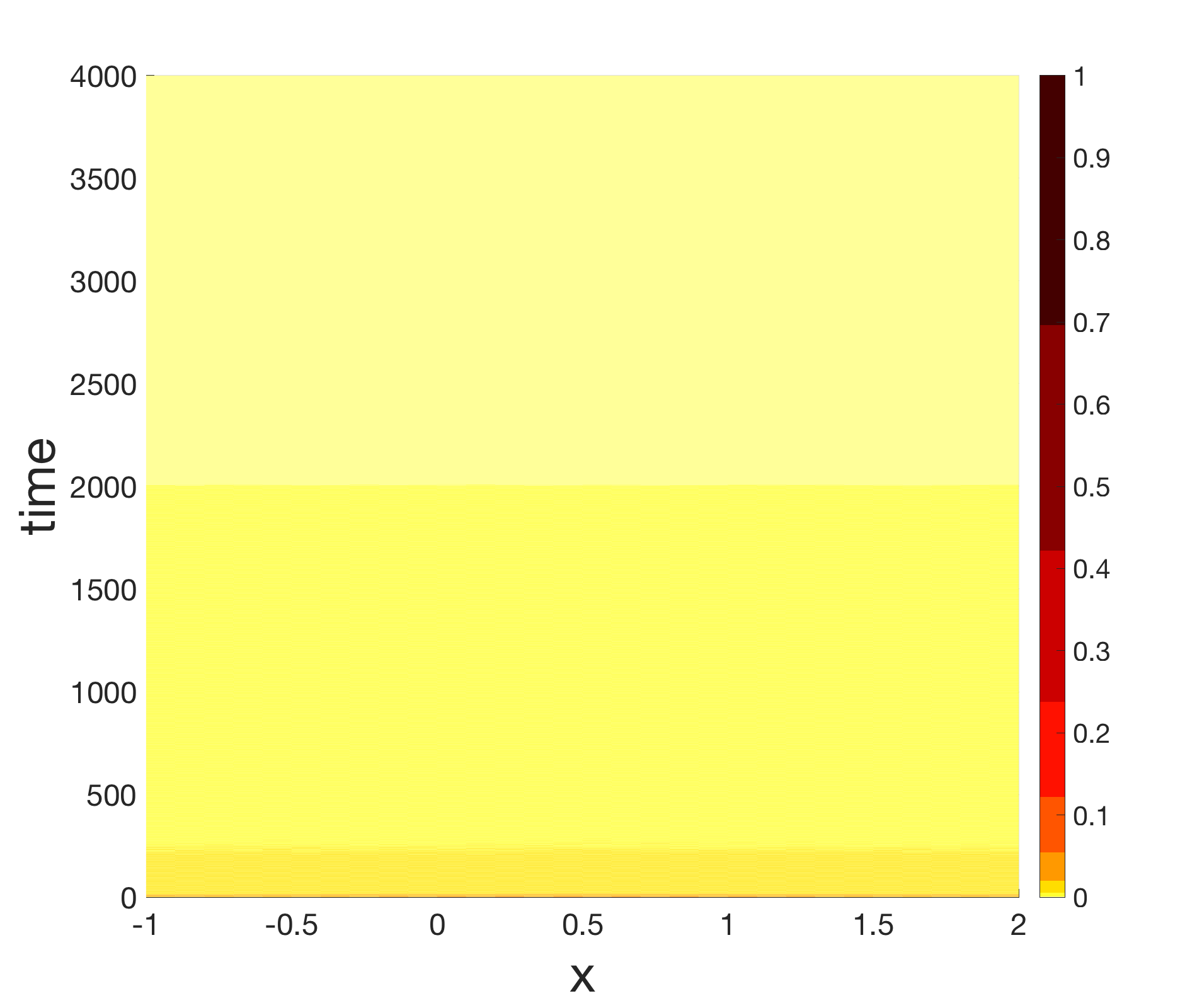}\label{fig: DiffusionDominant1}}
\,
\subfloat[][Comparison between ABM and PDE density in live state with respect to time.]{
\includegraphics[width=.4\textwidth]{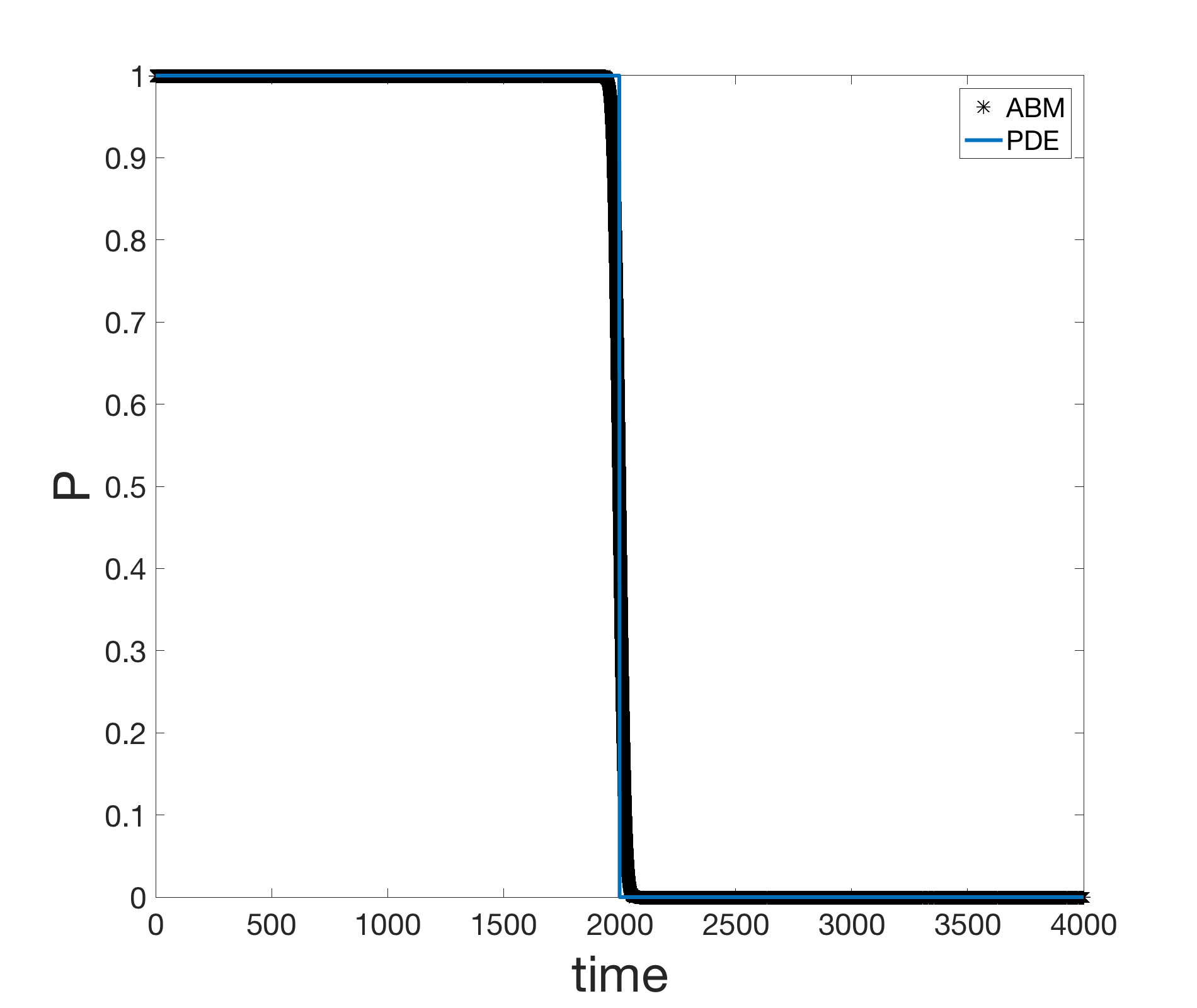}\label{fig: DiffusionDominant2}}
\caption{Diffusion-dominant parameter regime comparison between ABM and PDE density in live state with respect to time.  
Here $\dx=0.1$, $\dt=0.01$, $\beta(x)=\dx 0.1 \frac{1}{2}(1+\sin \pi x)$, $\xi_c=1$. 
Agents are initialized at $x_0 = 0.5$.  
Thus, $\langle \beta(x) \rangle /(D\xi_c) = 0.001$.
}
\label{fig: DiffusionDominant}
\end{figure}
}

\section{Discussion and Conclusions}
In this work, we have developed a continuum PDE approximation to a stochastic agent-based model that has a cumulative coupling to the environment. We have shown through simulations that the ABM agrees qualitatively with the governing PDE. We have analyzed the newly developed PDE, showing that we have developed a stable, well-posed equation. 

\cso{The modeling framework developed assumes that the cells have cumulative exposure but the chemical or substance is not being removed from the environment. This setup can be used to model morphogens (signaling molecules) that often act directly on a cell by binding to a receptor. It is well established that cells exposed to high levels of transforming growth factor (TGF)-$\beta$ will lead to cell death \cite{Fosslien09}. In this scenario, morphogens bind and initiate a secondary process in the cell that accumulates and leads to cell death. After a small period of time (smaller than the scale of movement or time to cell death), the morphogen is released from the receptor and thus the relative morphogen concentration can be assumed constant in time.}

\cso{Now that we have developed the initial framework, additional model extensions will allow us to look at real world examples for cells coupled to the environment in a cumulative way. 
There are many examples of cells that have directed motion due to a chemoattractant \cite{Swaney10}. 
This would involve extending the model beyond the general framework that studied a constant or fixed bias} \cmy{without affecting the chemical concentration} \cso{(Section \ref{sec: biased}).} 
\cmy{Cell movement will be biased due to the chemical gradient and, in turn, the chemical gradient will change due to localized cellular absorption.  We can derive the system of PDEs in a manner similar to that of Sections \ref{sec: Derivation} and \ref{sec: Absorption}.  The method of deriving the Heaviside term for $V$ in Section \ref{sec: Absorption} provides us the capability of handling different states as well as the capability of handling cells that die and no longer absorb chemicals.  However, further analytical work will be required since the well-posedness proof of the PDE and numerical solution will need to be modified to account for a new space and time dependent advection term as well as the fact that the absorption term, $\beta$, is coupled with the space and time dependent chemical profile.}

\cso{Further, there are other interesting biological examples of toxins that might not only kill the cell, but will change their motility. In wound healing, the migration of epithelial cells is important. However, exposure to chemical substances (e.g. chemical warfare agents) has been shown to also decrease cell migration speeds \cite{Steinritz15}. Another example is in cancer therapies where immunotoxins such as MOC31PE reduce cell migration and induce gene expression, leading to cell death in ovarian cancer cells \cite{Wiiger14}. 
Additionally, human neural crest cells migrate to particular locations in the embryo and differentiate into different cell types. It has been found that interferon-beta (IFN$\beta$), a potential toxin \cite{Pallocca17}, impairs neural crest cell migration. 
To model these types of applications, one would potentially have to re-derive the equations for a variable step size based on the cumulative exposure.  In terms of the computational method, we believe this case could be simulated} \cmy{using the same semi-discrete numerical formulation since the $\xi$-dependent diffusion parameter will be treated as a constant in the Green's function.}
\cmy{However, the $\xi$-dependent diffusion parameter will cause subtle complications when manipulating the integrals within the well-posedness analysis.}

Although we have focused on the example of a cell or agent absorbing a fraction of the particles of a chemical in the surrounding environment, these equations are generally applicable to any scenario where a cell is accumulating any quantity that is a function of space and/or time. In the current contexts, we have focused on the case of cells that do not interact with each other. The focus of future work will investigate interactions of different cells or agents as they interact in a cumulative way with their environment.

\section*{Acknowledgments}
This work was supported, in part, by National Science Foundation Division of Mathematical Sciences grant 1455270. S.D. Olson was also supported, in part, by a Fulbright research scholar grant. M.A. Yereniuk has been supported, in part, by the SMART scholarship program. All simulations were run on a cluster acquired through National Science Foundation grant 1337943.

\newpage
\bibliographystyle{plain}
\bibliography{ms}

\end{document}